\documentclass[utf8,12pt,nothms,a4paper,english]{aclart}


\usepackage{hyperref}

 \DeclareUnicodeCharacter{00A0}{ }

 \usepackage{lmodern}
 \usepackage{color}

\numberwithin{equation}{subsection}
\theoremstyle{plain}

\newtheorem{theo}[equation]{\theoname}
\newtheorem{lemm}[equation]{\lemmname}
\newtheorem{prop}[equation]{\propname}
\newtheorem{coro}[equation]{\coroname}
\newtheorem{setting}{Setting}
\theoremstyle{remark}
\newtheorem{rema}[equation]{\remaname}

\makeatletter
\let\c@paragraph\c@equation

\let\cl@paragraph\cl@equation
\makeatother
\def\sozat{\,;\,}

\usepackage[all]{xy}

\theoremstyle{plain}
\newtheorem*{theo*}{\theoname}
\theoremstyle{remark}

\def\abs#1{\left|#1\right|}
\def\norm#1{\left\|#1\right\|}
\def\C{\mathbf C}
\def\R{\mathbf R}
\def\N{\mathbf N}
\def\Z{\mathbf Z}
\def\Q{\mathbf Q}
\def\P{\mathbf P}
\let\ra\rightarrow \let\hra\hookrightarrow

\let\bar\overline
\let\phi\varphi 
\let\emptyset\varnothing
\def\lpar{\mathopen{(\!(}} \def\rpar{\mathclose{)\!)}}
\def\lbra{\mathopen{[\![}} \def\rbra{\mathclose{]\!]}}

\def\Tr{\operatorname{Tr}}
\def\Res{\operatorname{Res}}

\makeatletter
\def\Cl{\operatorname{Cl}}
\newcommand\Clan[1][]{\operatorname{Cl}^{\mathrm{an}%
   \@ifempty{#1}{}{,#1}}}
\def\Clanmax{\operatorname{Cl}^{\mathrm{an},\mathrm{max}}}

\newcommand\ga[1][]{\mathbf G_{\mathrm a\@ifempty{#1}{}{,#1}}}
\newcommand\gm[1][]{\mathrm G_{\mathrm m\@ifempty{#1}{}{,#1}}}
\makeatother
\def\Card{\operatorname{Card}}
\def\Hilb{\operatorname{Hilb}}
\def\Pic{\operatorname{Pic}}
\def\ord{\operatorname{ord}}
\def\Spec{\operatorname{Spec}}
\def\Proj{\operatorname{Proj}}
\def\pr{\operatorname{pr}}
\def\div{\operatorname{div}}
\def\ac{\operatorname{ac}}

\def\AD{\mathbb A}

\def\eff{\mathrm{eff}}
\def\GVar{\mathrm{KVar}}
\def\GVarM{\mathscr M}
\def\GExpVarM{\mathscr{E}\!\mathit{xp}\mathscr M}
\def\GExpVar{\mathrm{KExpVar}}
\def\Lef{\mathbf{L}}
\makeatletter
\def\ie{\emph{ie.}\@ifnextchar,{\xspace}{,\xspace}}
\def\eg{\emph{eg.}\@ifnextchar,{\xspace}{,\xspace}}
\makeatother
\def\C{\mathbf C}
\def\Z{\mathbf Z}

\def\Aff{\mathbf A}

\def\ExpVar{\mathrm{ExpVar}}
\let\ExpMot\GExpVarM
\let\Mot\GVarM
\def\Spec{\operatorname{Spec}}
\def\Lef{\mathbf L}
\def\ord{\operatorname{ord}}
\def\res{\operatorname{res}}
\def\Id{\operatorname{Id}}
\def\Hom{\operatorname{Hom}}
\def\pr{\operatorname{pr}}
\let\div\relax
\usepackage{mathabx}
\let\PC\boxdot
\def\div{\operatorname{div}}
\let\ra\rightarrow

\def\ie{\emph{i.e.}\xspace}
\def\resp{\emph{resp.}\xspace}
\iffalse
\def\tcb{\textcolor{blue}}
\def\tcr{\textcolor{blue}}
\else
\let\tcb\relax\let\tcr\relax
\def\color#1{}
\fi
\begin{document}

\title {Motivic height zeta functions}
\author{Antoine Chambert-Loir}
\address{D\'epartement de math\'ematiques d'Orsay \\ B\^atiment 425 \\
Facult\'e des sciences d'Orsay \\ Universit\'e Paris-Sud \\ F-91405 Orsay Cedex}
\email{Antoine.Chambert-Loir@math.u-psud.fr}
\author{Fran\c cois Loeser}
\address{Sorbonne Universit\'es, UPMC Univ Paris 06, UMR 7586 CNRS, Institut Math\'ematique de Jussieu, F-75005 Paris}
\email{Francois.Loeser@upmc.fr}

\begin{abstract}
We consider a motivic analogue of the height zeta function for integral points
of equivariant partial compactifications of affine spaces.
We establish its rationality and determine its largest pole.
\end{abstract}

\dedicatory{Dedicated to the memory of Professor Jun-ichi Igusa}

\subjclass{}
\maketitle

 \def\theequation{\arabic{equation}}



A major problem of Diophantine geometry is to understand the distribution
of rational or integral points of algebraic varieties defined over
number fields. For example, a well-studied question put
forward by Manin in~\cite{batyrev-m90}
is that of
an asymptotic expansion for the number of rational/\allowbreak integral
points of bounded height. A basic tool is the height zeta function
which is a Dirichlet series.

Around 2000, E. Peyre suggested to consider the analogous problem over function
fields, which has then an even more geometric flavor since it
translates as a problem of enumerative geometry, namely
counting algebraic curves of given degree and establishing properties
of the corresponding generating series. 
In view of the developments of motivic integration
by Kontsevich, Denef--Loeser~\cite{denef/loeser1999b}, etc.,
it is natural to look at a more general generating series
which not only counts the \emph{number} of such algebraic curves, but
takes into account the \emph{space} they constitute 
in a suitable Hilbert scheme. 

A natural  coefficient ring for the generating
series is the Grothendieck ring of varieties~$\GVar_k$: if $k$
is a field, this ring
is generated as a group by the isomorphism
classes of $k$-schemes of finite type, 
the addition being subject to obvious
cut-and-paste relations, and the product is induced by
the product of $k$-varieties. One interest of this generalization
is that it also makes sense in a purely geometric context, where no counting
is available.

Such a situation has been first studied  in
a paper by Kapranov in~\cite{kapranov2000},
where the analogy with the mentioned diophantine problem is not
pointed out. Later, Bourqui made some progress on the
motivic analogue of Manin's problem,
see~\cite{bourqui2009}, as well as his survey
report~\cite{bourqui2011}.


\medskip

In this article, we consider the following situation.\par\nobreak

\begin{setting}
Let $k$ be an algebraically closed
 field of characteristic zero. 
Let $C_0$ be a quasi-projective smooth connected curve over~$k$
and let~$C $ be its smooth projective compactification; 
we let $S=C \setminus C_0$.
Let $F=k(C)$ be the function field of~$C $ and $g$ be its genus.

Let $X$ be a projective irreducible $k$-scheme together with a
non-constant morphism $\pi\colon X\ra C$. 
Let $G$ and~$U$ be Zariski open subsets of~$X$ such that $G\subset U\subset X$.
Let $\mathscr L$ be a line bundle on~$X$;
we assume that there exists an effective $\Q$-divisor~$D$ 
supported on~$(X\setminus U)_F$ such that $\mathscr L(-D)$ is ample
on~$X_F$. 
\end{setting}

We are interested in sections $\sigma\colon C \ra X$
of~$\pi$ such that $\sigma(C_F)\subset G_F$ and $\sigma(C_0)\subset U$. 
\tcr{As in the Hasse principle, existence of \emph{local}  such sections
is a necessary condition to the existence of global sections~$\sigma$.}

\bgroup\color{blue}
\begin{setting}
We assume that for every $v\in C_0$,
$G(F_v)\cap U(\mathfrak o_v)\neq\emptyset$,
where $\mathfrak o_v$ is the completion of~$\mathscr O_{C,v}$
and $F_v$ is its field of functions.
\end{setting}
\egroup

We then want to study the family of such sections~$\sigma$
with prescribed degree $n=\deg\sigma^*\mathscr L$.
This is a geometric/motivic analogue of the  variant of Manin's problem
for \emph{integral points}.

By Proposition~\ref{lemm.hilbert}, these conditions define a 
constructible set~$M_{U,n}$ (in some $k$-scheme); moreover, the hypothesis
on~$\mathscr L$ implies that there exists~$n_0\in\Z$ 
such that $M_{U,n}$ is empty  for $n\leq n_0$.
Considering the classes~$[M_{U,n}]$ of these sets in~$\GVar_k$,
we form the generating Laurent series
\begin{equation*}\label{eq.def-ZU}
  Z_U(T) = \sum_{n\in\Z} [M_{U,n}]  T^{n}   \end{equation*}
and ask about its  properties.
 

Precisely, we investigate in this paper the motivic counterpart
of the situation studied 
recently in the paper~\cite{chambert-loir-tschinkel:2012} 
by Y.~Tschinkel and the first named author.

\begin{setting}
In this paper, we consider the particular case
where $G_F$ is the additive group~$\ga[F]^n$, $U_F=G_F$ and $X_F$
admits an action of~$G_F$ which extends the group action
of~$G_F$ over itself.
We also assume that 
the irreducible components
of the divisor at infinity~$\partial X=X\setminus G$
are smooth and meet transversally.
Finally, we restrict ourselves to the case where the restriction
of~$\mathscr L$ to the generic fiber~$X_F$ is equal
to $-K_{X_F}(\partial X_F)$, the log-anticanonical line bundle. 
\end{setting}
(As explained at the end of Section~\ref{s.setup-ag}, 
this line bundle satisfies the previous ampleness assumption.)
 
Let $\Lef$ be the class of the affine line~$\Aff^1_k$ in~$\GVar_k$
and let $\GVarM_k$ be the localized ring of~$\GVar_k$
with respect to the multiplicative subset~$S$
generated by $\Lef$ and the elements $\Lef^a-1$, for $a\in\N_{>0}$.
An element of~$\GVar_k$ is said to be \emph{effective} if it can be
written as a sum of classes of algebraic varieties;
similarly, an element of $\GVarM_k$ is effective if its
product by some element of~$S$ is the image of an effective
element of~$\GVar_k$. For example, $1-\Lef^{-a}=\Lef^{-a}(\Lef^a-1)$ 
is effective for every $a>0$.

Let $\GVarM_k\{T\}^\dagger$ and $\GVarM_k\{T\}$
be the subrings of~$\GVarM_k\lbra T\rbra [T^{-1}]$
generated by $\GVarM_k[T, T^{-1}]$ and the inverses of the polynomials
$1-\Lef^aT^b$, \tcb{where $(a,b)\in\N\times\N_{>0}$ are integers 
such that $b>a$, respectively $b\geq a$.}
For $b>a\geq 0$, $1-\Lef^{a-b}=\Lef^{a-b}(\Lef^{b-a}-1)$ is
invertible in~$\GVarM_k$, 
so that every element~$P$ of $\GVarM_k\{T\}^\dagger$ has a value $P(\Lef^{-1})$
at $T=\Lef^{-1}$
which is an element of~$\GVarM_k$.

\vskip0pt plus 4\baselineskip\penalty-30\vskip0pt plus-4\baselineskip

The following theorem is the main result of this paper.
\begin{theo}\label{theo.main}
\tcr{Assume the notation and hypotheses of Settings~1, 2, and 3, are in force.}

\nobreak
The Laurent series $Z_U(T)$ 
belongs to $\GVarM_k\{T\}$.
More precisely, there exists an integer~\mbox{$a\geq 1$},
an element $P_U(T)\in\GVarM_k\{T\}^\dagger$
such that $P_U(\Lef^{-1})$ is an effective non-zero element of $\GVarM_k$,
and a positive integer~$d$
such that
\[ (1-\Lef^a T^a)^d Z_U(T) =P_U(T). \]
\end{theo}
Any $k$-constructible set~$M$ can be written as a finite disjoint
union  of integral $k$-varieties; we let $\dim(M)$ be the
maximal dimension of these varieties and $\kappa(M)$ be the number of 
such varieties of maximal dimension; they do not
depend on the chosen partition.
\begin{coro}\label{coro.main}
For every integer~$p\in\{0,\dots,a-1\}$, one of the following cases
occur when $n$ tends to infinity in the congruence class of~$p$ modulo~$a$:
\begin{enumerate}
\item Either $\dim (M_{U,n})=\mathrm o(n)$,
\item Or $\dim (M_{U,n})-n$ has a finite limit and $\log(\kappa(M_{U,n}))/\log(n)$
converges to some integer in~$\{0,\dots,d-1\}$.
\end{enumerate}
Moreover, the second case happens at least for one integer~$p$.
\end{coro}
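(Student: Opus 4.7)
\medskip

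\noindent\emph{Proof sketch.}
The plan is to read off the asymptotics of $[M_{U,n}]$ directly from the rational identity $(1-\Lef^a T^a)^d Z_U(T)=P_U(T)$ provided by Theorem~\ref{theo.main}, by exploiting the dimension filtration on $\GVarM_k$. Expanding the geometric series $(1-\Lef^aT^a)^{-d}=\sum_{r\geq 0}\binom{r+d-1}{d-1}\Lef^{ar}T^{ar}$ and writing $P_U(T)=\sum_k p_k T^k$, one obtains for every $n$ large enough the convolution formula
\[
[M_{U,n}] \;=\; \sum_{r\geq 0} \binom{r+d-1}{d-1}\,\Lef^{ar}\,p_{n-ar}.
\]
Only indices $k=n-ar$ sharing the residue class of $n$ modulo $a$ contribute, so, fixing a residue $p\pmod a$, I would isolate the corresponding sub-sum and analyse each of its summands in the dimension filtration.

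The first key step will be the sub-critical growth of the coefficients of $P_U$. Each generator $(1-\Lef^\alpha T^\beta)^{-1}$ of $\GVarM_k\{T\}^\dagger$ has $\beta>\alpha\geq 0$, so its coefficient of $T^m$ equals $\Lef^{\alpha m/\beta}$ when $\beta\mid m$ and zero otherwise, hence of filtration dimension at most $(\alpha/\beta)m$; passing to products and finite sums therefore produces some $\lambda<1$ with $\dim p_k\leq \lambda\,k + O(1)$. Consequently $\dim p_k - k\to -\infty$, and
\[
D_p \;:=\; \sup\bigl\{\dim p_k - k : k\equiv p\pmod a,\ p_k\neq 0\bigr\}
\]
is either a finite integer attained on a finite set $S_p$ of indices, or equals $-\infty$.

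The asymptotic analysis then proceeds term by term: each summand lies in filtration dimension $(n-k)+\dim p_k = n + (\dim p_k - k)\leq n+D_p$. When $D_p=-\infty$ the class $[M_{U,n}]$ vanishes for all sufficiently large $n\equiv p\pmod a$, yielding Case~(1). When $D_p$ is finite, only the indices $k\in S_p$ reach the top filtration degree $n+D_p$, and the top part of $[M_{U,n}]$ equals, in $\mathrm{gr}^{\,n+D_p}\GVarM_k$,
\[
\sum_{k\in S_p} \binom{(n-k)/a+d-1}{d-1}\,\Lef^{n-k}\,\overline{p_k},
\]
a polynomial in $n$ of degree at most $d-1$. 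One then reads $\dim M_{U,n}-n\to D_p$, while the polynomial growth in $n$ of the number of top-dimensional components translates into $\log\nu(M_{U,n})/\log n\to\delta$ for some integer $\delta\in\{0,\dots,d-1\}$, giving Case~(2).

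For the final assertion, I would formally specialise the identity $(1-\Lef^aT^a)^dZ_U(T)=P_U(T)$ at $T=\Lef^{-1}$: the element $P_U(\Lef^{-1})=\sum_k \Lef^{-k}p_k$ has dimension at most $\max_k(\dim p_k - k)=\max_p D_p$, so the hypothesis that $P_U(\Lef^{-1})$ is non-zero and effective forces $\max_p D_p\geq 0$; Case~(2) therefore occurs at least for the residue class realising this maximum. The most delicate point will be to guarantee that the leading polynomial of $[M_{U,n}]$ in the associated graded is not identically zero at this distinguished $p$; the effectivity hypothesis on $P_U(\Lef^{-1})$ is precisely what rules out such cancellations, and this non-cancellation is where I expect the bulk of the technical work to lie.

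\medskip
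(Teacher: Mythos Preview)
Your strategy---expanding $(1-\Lef^aT^a)^{-d}$ directly and reading off the asymptotics of $[M_{U,n}]$ from the convolution with the coefficients $p_k$ of $P_U$---is a legitimate alternative to the paper's route, which instead performs a full partial-fraction decomposition of $Z_U(T)$ (Lemmas on resultants and decomposition over $\GVarM_k[T]$) before analysing each pole separately. Your approach is more direct in that it avoids the algebraic preliminaries on resultants of the polynomials $1-\Lef^{a_i}T^{b_i}$.

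However, there is a genuine gap. You work throughout in the dimension filtration on $\GVarM_k$ and in its associated graded, but this is not fine enough to carry the argument. The quantity $\nu(M_{U,n})$ is \emph{not} an invariant of the class of $[M_{U,n}]$ in $\mathrm{gr}^{\dim M_{U,n}}\GVarM_k$: one cannot read off the number of top-dimensional components from the filtration alone, and non-vanishing of an element in the associated graded is not something one can test directly (we have no description of $\mathrm{gr}\,\GVarM_k$). The paper resolves this by passing through the Poincar\'e polynomial $\PC\colon\GVarM_k\to\Z\lbra t^{-1}\rbra[t]$, a genuine ring homomorphism whose target is a well-understood ring; for a constructible set~$M$ one has $\deg\PC([M])=2\dim M$ with leading coefficient $\nu(M)$, so both invariants are recovered simultaneously. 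Your sentence ``the polynomial growth in~$n$ of the number of top-dimensional components translates into $\log\nu(M_{U,n})/\log n\to\delta$'' is exactly where this tool is missing.

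A second, related issue is your treatment of $P_U(\Lef^{-1})$. You write it as the infinite sum $\sum_k\Lef^{-k}p_k$ and bound its dimension by $\max_p D_p$, but this series does not converge in $\GVarM_k$; the value $P_U(\Lef^{-1})$ is defined algebraically from the rational-function presentation of $P_U$, not as that sum. The paper sidesteps this by using the partial-fraction form: there, $P(\Lef^{-1})$ equals the single finite sum $\sum_{p=0}^{a-1}q_{1,d,p}\Lef^{-p}$, and effectivity plus non-vanishing forces $\PC$ of some $q_{1,d,p}$ to be non-zero, which is precisely the non-cancellation input you need. Your final paragraph correctly locates this as the crux, but ``effectivity rules out cancellations'' is not an argument---it is the Poincar\'e polynomial (applied after partial fractions, or after an equivalent finite reduction) that does the work.
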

 
We observe that this condition on congruence classes is unavoidable in general.
For example, if $\mathscr L$ is a multiple $\mathscr L_0^a$ 
of a class in~$\Pic(\mathscr X)$, then $M_{U,n}=\emptyset$
for $n\nmid a$.

In the arithmetic case,
the corresponding question 
consists in establishing the analytic
property of the height zeta function (holomorphy for $\Re(s)>1$,
meromorphy on a larger half-plane, pole of order~$d$ at $s=1$)
as well as showing that the  number of points of height~$\leq B$
grows as $B(\log(B))^{d-1}$.
Its proof in~\cite{chambert-loir-tschinkel:2012} relies on 
the Poisson summation formula for the discrete cocompact subgroup $G(F)$
of the adelic group~$G(\AD_F)$.
In the present work, we take advantage
of the motivic Poisson formula recently established 
by E.~Hrushovski and D.~Kazhdan in~\cite{hrushovski-kazhdan:2009}
to prove new results in the geometric setting.

However, in its present form, this motivic Poisson formula  
suffers two limitations. Firstly, the functions it takes as input 
may only depend on finitely many places of the given function field.
For this reason, the question we solve in this paper is a geometric analogue
of Manin's problem for integral points, rather than for rational points.
Secondly, the Poisson formula only applies to vector groups,
and this is why our varieties are assumed to be equivariant compactifications
of such groups.

\medskip

The plan of the paper is the following. 

We begin the paper by an exposition, in a self-contained geometric
language, of the motivic Poisson formula of Hrushovski--Kazhdan.
We then gather in Section~\ref{s.prelim} some preliminary results
needed for the proof.
In particular, we show in Proposition~\ref{prop.tauber}
that Corollary~\ref{coro.main} is a consequence
from Theorem~\ref{theo.main}.
For eventual reference, we also prove there
a general existence theorem for the moduli spaces
which we study here,  see Proposition~\ref{lemm.hilbert}.
We end this Section by recalling some notation
on Clemens complexes, and on functions on arc spaces
with values in $\Mot_k$.

In Section~\ref{s.setup}, we lay out the foundations for
the proof of Theorem~\ref{theo.main}. Its main goal
consists in describing the moduli spaces as adelic subsets
of the group~$G$.

The core of the proof of Theorem~\ref{theo.main} begins with 
Section~\ref{s.proof}. We first apply the motivic Poisson summation
formula of Hrushovski and Kazhdan. We show that this 
formula gives an expression
$Z(T)$  as a ``sum'' (in the sense of motivic integration) 
over $\xi\in G(F)$ of rational functions $Z(T,\xi)$ whose denominators
are products of factors of the form $1-\Lef^a T^b$ for $b\geq a$.
The point is  that the term corresponding to
the parameter~$\xi=0$ is the one which involves the largest number
of such factors with $a=b$; intuitively, 
the ``order of the pole of $Z(T,\xi)$ at $T=\Lef^{-1}$'' 
is larger for $\xi=0$
than for $\xi\neq 0$. Admitting these facts,
it is therefore a simple matter to conclude the proof
of Theorem~\ref{theo.main}. 

The proof of these facts are the subject of Sections~\ref{s.oscill}
and~\ref{sec.local}. In fact, once rewritten as a motivic  integral,
the Laurent series $Z(T,0)$ is a kind of ``geometric'' motivic
Igusa zeta function. Its analysis, using embedded resolution
of singularities, would be  classical; in fact, our geometric setting
is so strong that we even do not need to resolve singularities in this case.
For general $\xi$, however, what we obtain is a sort of ``motivic
oscillatory integral''. Such integrals are studied in a coordinate
system in Section~\ref{s.oscill}. Finally, in Section~\ref{sec.local},
we establish the three propositions that we had temporarily admitted
in Section~\ref{s.proof}.

\medskip 

In this paper, 
an important role is played by variants of the local zeta functions 
that Igusa had introduced in~\cite{igusa1975}
and which are studied by refining Igusa's initial analysis.
We are honored to dedicate this work to the memory of late Professor Igusa. 
\tcb{The second author had the privilege to first meet Professor Igusa more than thirty years ago. He would like
to acknowledge the profound impact 
of Professor Igusa's vision 
on his own research during all these years.}


\subsubsection*{Acknowledgments}
The research leading to this paper was initiated during a visit 
of the second author  to the  first author
when he was visiting
the Institute for Advanced Study in Princeton for a year. 
We would like to thank that institution for its warm hospitality.
The first author was supported by the Institut universitaire de France, 
by the National Science Foundation under agreement No. DMS-0635607,
as well as by the \emph{Positive} project of Agence nationale de la recherche,
under agreement ANR-2010-BLAN-0119-01.
The second author
was partially supported  by the European Research Council 
under the European Community's Seventh Framework Programme (FP7/2007-2013) 
/ ERC Grant Agreement nr.~246903 NMNAG.

\def\theequation{\thesubsection.\arabic{equation}}

\section{The motivic Poisson formula of Hrushovski--Kazhdan}
\label{s.poisson}

For the convenience of the reader, we begin this paper
with an exposition of
Hrushovski-Kazhdan's motivic Poisson summation formula.
We follow closely
the relevant sections from~\cite{hrushovski-kazhdan:2009},
but adopt a self-contained geometric language.
In the  rest of the paper, we will make an essential use
of the formalism recalled here.

To motivate the definitions,
let us discuss rapidly the dictionary with the Poisson summation
formula for the adele groups of global fields.
So assume that $F$ is a global field. Let $\AD_F$ be the
ring of adeles of~$F$; it is the restricted product of
the completions $F_v$ at all places~$v$ of~$F$ and 
is endowed with a natural structure of a locally compact
abelian group. The field~$F$ embeds diagonally in~$\AD_F$
and its image is a discrete cocompact subgroup.
Fix a Haar measure~$\mu$ on~$\AD_F$ as well as a non-trivial
character $\psi\colon\AD_F\ra\C^*$.
For every Schwartz-Bruhat function~$\phi$ on~$\AD_F^n$,
its Fourier transform is the function~$\mathscr F\phi$ 
on $\AD_F^n$ defined by
\[\mathscr F\phi(y) =  \int_{\AD_F^n} \phi(x) \psi(xy)\,\mathrm d\mu(x);\]
it is again a Schwartz-Bruhat function.
Moreover, the global Haar measure, additive character
and Fourier transform can be written as products of similar local  objects.
Then, one has
\[\sum_{x\in F^n}\phi(x) 
= \mu(\AD_F/F)^{-n} \sum_{y\in F^n} \mathscr F\phi(y). \]

The motivic Poisson summation formula provides an analogue of this formalism,
when $F$ is the function field of a curve~$C $ 
over an algebraically closed field. Integrals 
belong to the Grothendieck ring of varieties, more precisely,
to a (suitably localized) variant ``with exponentials'' of this ring.
They are constructed using \emph{motivic integration} at
the ``local'' level of completions $F_v$; here $F_v$ is identified
with the field $k\lpar t\rpar$ of Laurent series,
so that $F_v^n$ can be considered as an infinite dimensional
$k$-variety, more precisely, an inductive limit of arc spaces
$t^{-m} k\lbra t\rbra^n\simeq \mathscr L(\Aff^n_k)$.
Motivic Schwartz-Bruhat functions are elements of relative
Grothendieck rings. The  possibility to define the
``sum over $F^n$'' of a motivic function
follows from the fact that it is zero outside of a finite
dimensional subvariety of this ind-arc space.
The Poisson summation formula then appears as a reformulation
of the Riemann--Roch theorem for curves combined with
the Serre duality theorem, as formulated in~\cite{serre60}.

\Subsection{The Grothendieck ring of varieties with exponentials}

\paragraph{}
Let $k$ be a 
field. 
The \emph{Grothendieck group of varieties} $\GVar_k$ is defined
by generators and relations; generators are $k$-varieties~$X$
(=$k$-schemes of finite type); relations are the following:
\[ X-Y, \]
whenever
$X$ and $Y$ are isomorphic $k$-varieties;
\[ X-Y-U, \]
whenever $X$ is $k$-variety, $Y$ a closed subscheme of~$X$ and $U=X\setminus Y$
is the complementary open subscheme.
Every $k$-constructible set~$X$ has a class~$[X]$ in the group~$\GVar_k$.

The \emph{Grothendieck group of varieties
with exponentials} $\GExpVar_k$ is defined by generators and relations
(cf. \cite{cluckers-loeser:2010,hrushovski-kazhdan:2009}).
Generators are pairs $(X,f)$,
where $X$ is a $k$-variety 
and $f\colon X\ra\Aff^1=\Spec(\Z[T])$ is a morphism.
Relations are the following:
\[ (X,f)-(Y,f\circ u) \]
whenever $X$, $Y$ are $k$-varieties, $f\colon X\ra\Aff^1$
a morphism, and $u\colon Y\ra X$ a $k$-isomorphism;
\[ (X,f)-(Y,f|_Y)-(U,f|_U) \]
whenever $X$ is a $k$-variety, $f\colon X\ra\Aff^1$ a morphism,
$Y$ a closed  subscheme of~$X$ and $U=X\setminus Y$ the complementary
open subscheme;
\[ (X\times_\Z \Aff^1,\pr_2) \]
where $X$ is a $k$-variety and $\pr_2$ is the second projection.
We will write~$[X,f]$ to denote the class in $\GExpVar_k$ 
of a pair~$(X,f)$.

There is a morphism of Abelian groups
$\iota\colon \GVar_k \ra \GExpVar_k$
which sends
the class of~$X$ to the class~$[X,0]$.

Any pair $(X,f)$ consisting of a constructible set~$X$ 
and of a piecewise morphism $f\colon X\ra\Aff^1$
has a class $[X,f]$ in $\GExpVar_k$.

\paragraph{}
One endows $\GVar_k$ with a ring structure by setting
\[ [X] [Y]= [X\times_k Y] \]
whenver $X$ and~$Y$ are $k$-varieties. The unit element
is the class of the point~$\Spec (k)$.

One endows $\GExpVar_k$ with a ring structure by setting
\[ [X,f][Y,g] = [X\times_k Y, \pr_1^* f+\pr_2^*g], \]
whenever $X$ and~$Y$ are $k$-varieties, $f\colon X\ra\Aff^1$
and $g\colon Y\ra\Aff^1$ are $k$-morphisms;
$\pr_1^*f+\pr_2^*g$ is the morphism from~$X\times_k Y$ to~$\Aff^1$
sending $(x,y)$ to $f(x)+g(y)$.
The unit element for this ring structure 
is the class $[\Spec (k),0]=\iota([\Spec (k)])$.

The morphism $\iota\colon \GVar_k\ra\GExpVar_k$
is a morphism of rings.

One writes $\Lef$ for the class of $\Aff^1_k$ in $\GVar_k$,
or for the class of $ (\Aff^1_k,0)$
in $\GExpVar_k$.
Let $S$ be the multiplicative subset of~$\GVar_k$
generated by $\Lef$ and the elements $\Lef^n-1$, for $n\geq 1$.
The localizations of the rings $\GVar_k$ and $\GExpVar_k$
with respect to~$S$ are denoted $\GVarM_k$ and $\GExpVarM_k$ respectively.
There is a morphism of rings $\iota\colon \GVarM_k\ra\GExpVarM_k$.

\begin{lemm}[{\cite[Lemma 3.1.3]{cluckers-loeser:2010}}]\label{lemm.injective}
The two ring morphisms $\iota \colon \GVar_k\ra\GExpVar_k$
and $\iota\colon \GVarM_k \ra \GExpVarM_k$  are injective.
\end{lemm}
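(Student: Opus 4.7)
My plan is to construct an explicit additive retraction of $\iota$. Define $\pi\colon\GExpVar_k\to\GVar_k$ on generators by
\[\pi([X,f])=[f^{-1}(0)]-[f^{-1}(1)].\]
To verify that $\pi$ descends to a well-defined group homomorphism, I check each of the three defining families of relations in turn. Invariance under $k$-isomorphisms is clear, since an isomorphism $u\colon Y\to X$ restricts to an isomorphism $(f\circ u)^{-1}(t)\to f^{-1}(t)$ for each closed point $t\in\Aff^1_k(k)$. Additivity across a closed/open decomposition $X=Y\sqcup U$ holds because $f^{-1}(t)$ itself decomposes as $(f^{-1}(t)\cap Y)\sqcup(f^{-1}(t)\cap U)$, so the fiber class splits additively and therefore so does $\pi$. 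Finally, both fibers of $\pr_2\colon X\times\Aff^1_k\to\Aff^1_k$ over $0$ and over $1$ are canonically isomorphic to $X$, giving $\pi([X\times\Aff^1_k,\pr_2])=[X]-[X]=0$, so the projection relation is also respected.

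Once $\pi$ is known to be well-defined, the immediate computation
\[\pi(\iota([X]))=\pi([X,0])=[X]-[\emptyset]=[X]\]
(the preimage of $0$ under the zero map is all of $X$, while the preimage of $1$ is empty) shows that $\pi\circ\iota=\mathrm{id}_{\GVar_k}$, from which the injectivity of $\iota\colon\GVar_k\to\GExpVar_k$ follows at once. The same formula makes perfect sense after inverting $\Lef$ and the elements $\Lef^n-1$, yielding a map $\pi_M\colon\GExpVarM_k\to\GVarM_k$ which is again a retraction of $\iota$; this gives the second injectivity.

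The whole argument is essentially tautological once the retraction is written down. The main obstacle, such as it is, lies in finding the correct formula for $\pi$: the design constraint is that it must kill the projection relation $[X\times\Aff^1_k,\pr_2]=0$, which rules out naive candidates such as $\pi([X,f])=[X]$ or $\pi([X,f])=[f^{-1}(0)]$ and forces the use of a virtual difference between fiber classes over two distinct points of $\Aff^1_k$. One can even regard this as a shadow of the classical identity $\sum_\psi\psi(0)-\sum_\psi\psi(1)=0$ that underlies the vanishing of exponential sums of constant functions, which is exactly the algebraic content the third relation encodes.
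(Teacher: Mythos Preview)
Your proof is correct and is essentially identical to the paper's: the paper also constructs the retraction $j=j_0-j_1$ where $j_t([X,f])=[f^{-1}(t)]$, checks it respects the relations, and observes that it is a section of~$\iota$. The only minor point is that your passage to the localized ring is slightly glib---what makes the extension work is that $\pi$ is $\Z[\Lef]$-linear (not merely additive), so one may tensor over $\Z[\Lef]$ with its localization.
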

\begin{proof}
For $t\in\Aff^1_k(k)$, let $j_t$ be the map
that sends a pair $(X,f)$ to the class in~$\GVar_k$
of the $k$-variety $[f^{-1}(t)]$.
One observes that $j_0-j_1$ defines a morphism
of groups $j\colon \GExpVar_k\ra \GVar_k$.
Indeed, for every $t\in\Aff^1_k(k)$, 
$j_t$ maps the additivity relations in $\GExpVar_k$ 
to additivity relations in $\GVar_k$.
Moreover, $j_t(Y\times\Aff^1_k,\pr_2)=[Y]$ for every $k$-variety~$Y$,
so that $j((Y\times\Aff^1_k,\pr_2))=0$.
This proves the existence of~$j$.
\tcr{By construction, $\iota$  is a section of~$j$,
hence} $\iota$ is injective.
\end{proof}

\begin{lemm}\label{lemm.vanish}
Let $X$ be a $k$-variety with a $\ga$-action and
let $f\colon X\ra\Aff^1$ be a morphism.
Let $\bar k$ be an algebraic closure of~$k$.
Assume that $f(t+x)=t+f(x)$
for every $t\in\ga(\bar k)$ and every $x\in X(\bar k)$.
Then, the class of~$(X,f)$ is zero in~$\GExpVar_k$.
\end{lemm}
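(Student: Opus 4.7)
The plan is to turn the equivariant function~$f$ itself into a coordinate along the $\ga$-orbits, thereby reducing the pair $(X,f)$ to the standard form $(Y\times\Aff^1,\pr_2)$ which is killed by the third defining relation of~$\GExpVar_k$.

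First I would introduce the zero fibre $X_0 = f^{-1}(0)$, a closed subscheme of~$X$, and consider the morphism
\[
\alpha\colon \ga\times X_0 \longrightarrow X,\qquad (t,x)\longmapsto t+x,
\]
obtained by restricting the action. Using the equivariance $f(t+x)=t+f(x)$, I would exhibit an inverse $\beta\colon X\to\ga\times X_0$ defined by $\beta(x)=(f(x),\,-f(x)+x)$: the second coordinate does lie in~$X_0$, since $f(-f(x)+x)=-f(x)+f(x)=0$, and the two compositions $\alpha\circ\beta$ and $\beta\circ\alpha$ are readily checked to be the identity by a direct coordinate computation. Hence $\alpha$ is a $k$-isomorphism.

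The same equivariance also gives $f\circ\alpha(t,x)=f(t+x)=t+f(x)=t$, so that $f\circ\alpha=\pr_1$. Applying the isomorphism relation in~$\GExpVar_k$, followed by swapping the two factors, I obtain
\[
[X,f]=[\ga\times X_0,\pr_1]=[X_0\times\Aff^1,\pr_2],
\]
and this last class is zero by the third defining relation of~$\GExpVar_k$.

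I do not foresee any serious obstacle: the equivariance of~$f$ directly furnishes an explicit transverse slice~$X_0$, so that no quotient construction, no local trivialisation, and no stratification of~$X$ are required. The only point meriting care is to verify that $\alpha$ and $\beta$ are mutually inverse \emph{scheme} morphisms, and not merely bijections on $k$-points; this however is clear once one writes them functorially and uses the axioms of the group action.
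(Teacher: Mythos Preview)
Your proof is correct and in fact more direct than the paper's. The paper invokes Rosenlicht's theorem to obtain a $\ga$-stable dense open subset~$U\subset X$ admitting a geometric quotient $U\to Y$ which is a $\ga$-torsor, then uses Zariski-local triviality of $\ga$-torsors to trivialise~$U$ (possibly after shrinking), shows $[U,f|_U]=0$ as you do, and finally closes up by Noetherian induction on the complement~$X\setminus U$.

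Your observation is that the equivariant map~$f$ itself already furnishes a global section of the action: the fibre $X_0=f^{-1}(0)$ is a global slice, and $\alpha\colon\ga\times X_0\to X$ is a global isomorphism, with explicit inverse $x\mapsto(f(x),-f(x)+x)$. No quotient construction, no local trivialisation, and no induction are needed. The paper's approach follows the general template for handling group actions in Grothendieck rings (open orbit, trivialise, recurse on the complement), which would be the natural move if the equivariance of~$f$ were only assumed generically or on a dense open; but under the global hypothesis of the lemma your argument is both simpler and self-contained. Your final caveat about checking $\alpha,\beta$ as scheme morphisms (via the functor of points) is well placed and sufficient.
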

\begin{proof}
By a theorem of Rosenlicht~\cite{rosenlicht1963},
there exists a $\ga$-stable dense open subset~$U$
and a quotient map $U\ra Y$ which is a $\ga$-torsor.
Every such torsor is locally trivial for the Zariski topology.
Consequently,
up to shrinking~$U$ (and~$Y$ accordingly),
this $\ga$-torsor is trivial, 
so that there exists a $\ga$-equivariant
isomorphism $u\colon \ga\times Y\simeq U$.
Let $g\colon Y\ra \Aff^1$ be the morphism given by $y\mapsto f(u(0,y))$.
For $y\in Y(\bar k)$ and $t\in \Aff^1_k(\bar k)$, 
one has  $f(u(t,y))=f(t+u(0,y))=t+f(u(0,y))$.
This shows that the class of~$(Y,f\circ u)$ equals
the product of the classes of~$(\Aff^1_k,\Id)$ and~$(Y,g)$.
It is zero in $\GExpVar_k$, so that the class
of $(U,f|_U)$ is zero too. One concludes the proof by Noetherian
induction.
\end{proof}

\paragraph{Relative variant}
Let $S$ be a $k$-variety. There are similar
rings $\GVar_S$, $\GExpVar_S$, $\GVarM_S$
and $\GExpVarM_S$ defined by replacing $k$-varieties
by $S$-varieties in the above definitions.
We write $[X,f]_S\in\GExpVar_S$ for the class
of a pair $(X,f)$, where $X$
is an $S$-variety and $f\colon X\to\Aff^1$ is a morphism.

Any morphism $u\colon S\ra T$  of $k$-varieties induces
morphisms $u_!$ and $u^*$ between the corresponding Grothendieck groups.
The definitions are similar; let us explain the case of $\GExpVar$.

Let $X$ be an $S$-variety and let $f\colon X\to \Aff^1$ be a morphism.
Via the morphism $u\colon S\to T$, we may view $X$ as a $T$-variety,
so that $(X,f)$ gives rise to a class $[X,f]_T$ in $\GExpVar_T$.  
This induces a morphism of groups
\[ u_!\colon \GExpVar_S\ra \GExpVar_T , \qquad [X,f]_S\mapsto [X,f]_T. \]
If $u$ is an immersion, then $u_!$ is a morphism of rings.

In the other direction, there is a unique morphism of rings
\[ u^*\colon \GExpVar_T\ra \GExpVar_S \]
such that $u^*([X,f]_T)=(X\times_T S,f\circ \pr_1)$ 
for every pair $(X,f)$ consisting of  a $T$-variety~$X$ and
of a morphism $f\colon X\to\Aff^1$.

\begin{rema}
Let $A=\Z[T]$ and $B$ be the localization of~$A$ with
respect to the multiplicative subset generated
by $T$ and the~$T^n-1$, for $n\geq 1$.
The unique ring morphism from~$A$ to~$\GVar_k$
which sends~$T$ to~$\Lef$ endowes~$\GVar_k$ and $\GExpVar_k$
with structures of $A$-algebras. Moreover,
$\GVarM_k\simeq B\otimes_A \GVar_k$ and
$\GExpVarM_k\simeq B\otimes_A \GExpVar_k$.

More generally, for every $k$-variety~$S$, $\GVar_S$ and $\GExpVar_S$
are $A$-algebras, and one has natural isomorphisms
\[ \GVarM_S\simeq B\otimes_A \GVar_S \simeq \GVarM_k\otimes_{\GVar_k} \GVar_S\]
and
\[ \GExpVarM_S\simeq B\otimes_A \GExpVar_S \simeq \GExpVarM_k\otimes_{\GExpVar_k} \GExpVar_S. \]

Thanks to this remark, we will often allow ourselves to write formulas
or proofs at the level of $\GExpVar_S$, when the generalization
to $\GExpVarM_S$ follows directly by localization.
\end{rema}

\paragraph{Functional interpretation of the relative Grothendieck rings}
Elements of $\GExpVar_S$ can be thought of as \emph{motivic 
functions} with source~$S$.
In particular, for $\phi\in \GExpVar_S$
and a point $s\in S$, 
considered as a morphism $\Spec (k(s))\ra S$,
one writes $\phi(s)$ for the
element~$s^*\phi$ of $\GExpVar_{k(s)}$.
By Lemma~\ref{lemm.motivic-function} below, a motivic function
is determined by its values.

Let $u\colon S\to T$ be a morphism of $k$-varieties.
The ring morphism $u^*\colon\GExpVar_T\to\GExpVar_S$ 
then corresponds to composition of functions.

If $u$ is an immersion, the morphism of rings
$u_!\colon\GExpVar_S\to\GExpVar_T$ corresponds in this
interpretation to extension by zero. In the general case,
we shall see that it corresponds to ``summation over rational points''
in the fibers of~$u$.

\begin{lemm}\label{lemm.motivic-function}
Let $\phi\in \GVar_S$
(\resp $\GVarM_S$, \resp $\GExpVar_S$, \resp $\GExpVarM_S$).
If $\phi(s)=0$ for every $s\in S$, then $\phi=0$.
\end{lemm}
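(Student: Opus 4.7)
The approach is Noetherian induction on~$S$, using a spreading-out principle at a generic point.

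Pick a generic point~$\eta$ of some irreducible component of~$S$. The key observation is that all four Grothendieck rings commute with filtered colimits of the base: since $k(\eta) = \varinjlim_{V \ni \eta} \mathcal{O}(V)$ is the filtered colimit over open neighborhoods of~$\eta$ in~$S$ (ordered by reverse inclusion), and since every generator and every defining relation of $\GVar$ and $\GExpVar$ involves only finitely many finite-type schemes, morphisms, and polynomial data, one has
\[ \GVar_{k(\eta)} \;=\; \varinjlim_V \GVar_V, \]
and likewise for $\GVarM$, $\GExpVar$ and $\GExpVarM$. Combined with the standard fact that in a filtered colimit an element that becomes zero must already vanish at some finite stage, the hypothesis $\phi(\eta) = 0$ then produces an open neighborhood~$V$ of~$\eta$ in~$S$ such that $\phi|_V = 0$ in the corresponding ring over~$V$.

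Set $Z = S \setminus V$, with its reduced structure; since $\eta \notin Z$, this is a strictly smaller closed subscheme of~$S$. The restriction $i^*\phi$ (with $i\colon Z \hra S$) still satisfies $(i^*\phi)(z) = 0$ at every point $z \in Z$, hence vanishes by the induction hypothesis. The basic additivity relation $\phi = j_*(j^*\phi) + i_*(i^*\phi)$, where $j\colon V \hra S$ is the open inclusion, then yields $\phi = 0$.

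The main obstacle is justifying the spreading-out principle uniformly in all four variants. For $\GVar_S$ it is classical. For $\GExpVar_S$ one must also descend the $\Aff^1$-valued morphisms~$f$, which is straightforward as each is specified by finitely many polynomial coefficients. For the localized rings~$\GVarM_S$ and $\GExpVarM_S$, vanishing of~$\phi(\eta)$ in the localization means $\tau \cdot \tilde\phi(\eta) = 0$ in the unlocalized ring for some lift~$\tilde\phi$ and some~$\tau$ in the multiplicative system generated by~$\Lef$ and the~$\Lef^n - 1$; both are finite data defined already over~$k$, and the argument reduces to the unlocalized case. The remaining verification—that all these finite descents can be carried out simultaneously over a common open~$V \ni \eta$—is routine Noetherian bookkeeping.
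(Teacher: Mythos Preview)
Your proof is correct and follows essentially the same approach as the paper: Noetherian induction on~$S$, spreading out the vanishing at a generic point to a dense open~$V$, and reducing to the closed complement via additivity. The only cosmetic difference is that the paper fixes a representative in the free abelian group and spreads out the finitely many elementary relations witnessing its vanishing over~$k(\eta)$, whereas you package the same spreading-out as the filtered-colimit identity $\GVar_{k(\eta)}=\varinjlim_{V}\GVar_V$ (and analogously for the other three rings); these are equivalent formulations of the same argument.
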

As a corollary, Lemmas~\ref{lemm.injective} and~\ref{lemm.vanish} hold 
for relative Grothendieck groups.

\begin{proof}
We give the proof for $\GExpVar_S$, the other three cases are similar.
Let us fix a representative~$M$ of~$\phi$ in $\Z[\ExpVar_S]$,
the free Abelian group generated by pairs $(X,f)$, 
where $X$ is an $S$-scheme and $f\colon X\ra \Aff^1$
is a morphism.
Let $s$ be a generic point of~$S$; since $\phi(s)=0$,
the object $M_{k(s)}$ is a linear combination of 
elementary relations.
By spreading out the varieties and the morphisms
expressing these relations, there exists a dense open subset~$U$
of~$S$ such that the object $M_U$ in~$\Z[\ExpVar_U]$
is a linear combination of the corresponding elementary
relations, \tcb{hence one has $[M_U]=0$. On the other hand,
we have $[M_T] (s) =0$ for every point $s$ in 
$T=S\setminus U$. By Noetherian descending induction it follows that
 $[M_T]=0$. Thus $[M]=0$, and $\phi=0$.}
\end{proof}

\paragraph{Exponential sums}
The class~$\theta$ of a pair $(X,f)$  in~$\GExpVar_k$ 
can be thought of as an analogue of the exponential sum
\[ \sum_{x\in X(k)} \psi(f(x)), \]
when $k$ is a finite field and
$\psi\colon k\ra\C^*$ is a fixed {non-trivial} additive character.
This justifies the \emph{notation} $\sum_{x\in X} \psi(f(x))$
for the class~$[X,f]$ in~$\GExpVar_k$.

More generally, let $S$ be a $k$-variety,
let~$\theta\in\GExpVarM_S$ and 
let $u\colon S\ra\Aff^1$ be a morphism.
We \emph{define}
\begin{equation}\label{eq.1.1.10}
  \sum_{s\in S} \theta(s)\psi(u(s)) = \theta\cdot [S,u]_S, 
\end{equation}
the product being taken in~$\GExpVarM_S$, and its
result being viewed in~$\GExpVarM_k$.
Let us make this definition explicit, assuming that $\theta=[X,f]_S$,
where $X$ is a $S$-variety and $f\colon X\ra\Aff^1$ is a morphism; 
in this case,
\[
  \sum_{s\in S} \theta(s)\psi(u(s)) 
= [X,f]_S [S,u]_S 
= [ X\times_S S , f\circ\pr_1 + u\circ\pr_2]_S
= [ X, f+u\circ g].
\]
To support this notation, observe that when $k$ is a finite field
and $s\in S(k)$, \tcb{denoting by $g$ the morphisms $X\to S$}, one has
\[ \theta(s) = \sum_{\substack{x\in X(k) \\ g(x)=s}} \psi(f(x)), \]
so that
\begin{align*}
 \sum_{s\in S(k)} \theta(s) \psi(u(s)) & = 
\sum_{s\in S(k)} \left(\sum_{\substack{x\in X(k) \\ g(x)=s} }
            \psi(f(x)) \right)  \psi(u(s)) \\
& = \sum_{x\in X(k)}  \psi(f(x)+u(g(x)) . \end{align*}

Let $u\colon S\to T$ be a morphism of $k$-varieties.
This notation of ``summation over rational points'' 
is consistent with the functional interpretation  of
the morphism $u_!\colon\GExpVar_S\to\GExpVar_T$.
Indeed, for every $\phi\in\GExpVar_S$ and every $t\in T$, one has
\[ u_!\phi( t) = \sum_{s\in u^{-1}(t)} \phi(s),\]
\tcb{with notation similar to (\ref{eq.1.1.10}).}

\begin{lemm}\label{lemm.linear}
Let $V$ be a finite dimensional
$k$-vector space, let $f$ be a linear form
on~$V$. Then, 
\[ \sum_{x\in V} \psi(f(x))  = \begin{cases}
  \Lef^{\dim(V)} & \text{if $f=0$;} \\ 0 & \text{otherwise.} \end{cases}
\]
\end{lemm}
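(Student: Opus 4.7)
The plan is to split into the two cases and, in each case, reduce directly to what has already been established. Recall that, by the definitions given just before the statement, $\sum_{x\in V}\psi(f(x))$ is by definition the class $[V,f]\in\GExpVarM_k$, where $V$ is regarded as a $k$-variety via the choice of any linear isomorphism $V\simeq\Aff^{\dim V}_k$.

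If $f=0$, then $[V,f]=[V,0]=\iota([V])$, where $\iota\colon\GVarM_k\to\GExpVarM_k$ is the morphism recalled above. Since $V$ is isomorphic as a $k$-variety to the affine space $\Aff^{\dim V}_k$, one has $[V]=\Lef^{\dim V}$ in $\GVarM_k$, whence $[V,f]=\Lef^{\dim V}$ in $\GExpVarM_k$, as required.

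If $f\neq 0$, the plan is to exhibit a $\ga$-action on $V$ compatible with $f$ in the sense of Lemma~\ref{lemm.vanish} and conclude. Choose a vector $v_0\in V$ with $f(v_0)=1$ (possible since $f$ is a nonzero linear form, hence surjective onto $k$); let $\ga$ act on $V$ by translation in the direction~$v_0$, namely $t\cdot x:=x+tv_0$. This is a morphic action, and for every $t\in\ga$ and $x\in V$ one has
\[ f(t\cdot x)=f(x)+t\,f(v_0)=f(x)+t, \]
which is precisely the hypothesis of Lemma~\ref{lemm.vanish}. That lemma then yields $[V,f]=0$ in $\GExpVar_k$, hence also in $\GExpVarM_k$.

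There is no genuine obstacle here: the only things to notice are that $V\simeq\Aff^{\dim V}_k$ so that its class is $\Lef^{\dim V}$, and that in the nonzero case the translation action in a direction on which $f$ takes the value~$1$ provides the hypothesis of Lemma~\ref{lemm.vanish} on the nose. Everything else is a direct unwinding of the definitions.
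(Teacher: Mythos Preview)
Your proof is correct and follows essentially the same approach as the paper's own proof: both handle $f=0$ by identifying $[V,0]=\Lef^{\dim V}$, and for $f\neq 0$ both pick a vector on which $f$ takes the value~$1$, use the resulting $\ga$-translation action, and invoke Lemma~\ref{lemm.vanish}.
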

\begin{proof}
By definition, 
the left hand side is the class of $(V,f)$
in $\GExpVar_k$.
This equals $[V]=\Lef^{\dim(V)}$ if $f=0$.
Otherwise, let $a\in V$ be such that $f(a)=1$
and let us consider the action of the additive
group~$\ga$ on~$V$ given by $(t,v)\mapsto v+ta$.
Since $f(v+ta)=f(v)+t$, it follows from 
Lemma~\ref{lemm.vanish} that $[V,f]=0$.
\end{proof}

\Subsection{Local Fourier transforms}

\paragraph{Schwartz-Bruhat functions of given level and their integral}
\label{ss-sbfunctions}
Let $F^\circ$ be a complete discrete valuation
ring, with field of fractions~$F$ and perfect residue field~$k$;
we write $\ord\colon F\to\Z$ for the (normalized) valuation on~$F$.
We assume that $F$ and~$k$ have the same characteristic;
let us fix a section of the morphism $F^\circ\to k$,
so that $F^\circ$ is a $k$-algebra.
Every local parameter in~$F$, \ie, every element $t\in F$ of valuation~$1$,
then gives rise to  isomorphisms 
$k\lbra t\rbra\simeq F^\circ$ and $k\lpar t\rpar \simeq F$. 

Fix such a local parameter~$t$.
For every two integers $M \leq N$, we can identify the quotient
set $\{x\sozat \ord(x)\geq M\}/\{x\sozat \ord(x)\geq N\}=
t^M F^\circ/t^{N}F^\circ$
of the elements~$x$ in~$F$ satisfying $\ord(x)\geq M$,
modulo those satisfying~$\ord(x)\geq N$,
with the $k$-rational points of the affine space 
$\Aff_k^{(M,N)}=\Aff^{N-M}_k$, via the formula
\[ x=\sum_{j=M}^{N-1} x_i t^i \pmod {t^{N}} \mapsto (x_{M},\dots,x_{N-1}). \]

For every integer~$n\geq 0$,
let $\mathscr S(F^n;M,N)$ be the ring $\ExpMot_{\Aff_k^{n(M,N)}}$;
its elements are called \emph{motivic Schwartz-Bruhat function} 
of level~$(M,N)$ on~$F^n$.
We define the integral of such a function~$\phi\in\mathscr S(F^n;M,N)$
by the formula
\begin{equation}
 \int_{F^n} \phi(x)\,\mathrm dx = \Lef^{-nN} \sum_{x\in\Aff_k^{n(M,N)}} \phi(x) .\end{equation}

\paragraph{Compatibilities}
\label{ss-sbfunctions-general}
The natural injection $t^M F^\circ/t^NF^\circ\ra t^{M-1}F^\circ/t^N F^\circ$
is turned into a closed immersion 
\[ \iota\colon \Aff_k^{(M,N)}\ra\Aff_k^{(M-1,N)},
\quad (x_M,\dots,x_{N-1}) \mapsto (0,x_M,\dots,x_{N-1}). \]
This gives rise to ring morphisms 
$\iota^*\colon\mathscr S(F^n;M-1,N)\ra\mathscr S(F^n;M,N)$
(\emph{restriction}) and 
$\iota_*\colon \mathscr S(F^n;M,N)\ra\mathscr S(F^n;M-1,N)$
(\emph{extension by zero}).  
One has $\iota^*\iota_*=\Id$.

Similarly, the natural projection $t^M F^\circ/t^{\tcb{N+1}} F^\circ\ra t^M F^\circ/t^{\tcb{N}}F^\circ$ induces a morphism 
\[ \pi\colon \Aff_k^{(M,N+1)}\ra\Aff_k^{(M,N)}, \quad
 (x_M,\dots,x_N) \mapsto (x_M,\dots,x_{N-1}) \]
which is a trivial fibration with fiber~$\Aff^1_k$.
This gives rise to a ring morphism 
$\pi^*\colon\mathscr S(F^n;M,N)\ra\mathscr S(F^n;M,N+1)$
and to a group morphism 
$\pi_*\colon\mathscr S(F^n;M,N+1)\to\mathscr S(F^n;M,N)$
(\emph{integration over the fiber}).
One has $\pi_*\pi^*(\phi)=\Lef^n\phi$ for every $\phi\in\mathscr S(F^n;M,N)$.

The space of  \emph{motivic smooth functions}  on~$F^n$
is then defined by
\begin{equation}
 \mathscr D(F^n)
 = \varprojlim_{M,\iota^*} \varinjlim_{N,\pi^*} \mathscr S(F^n;M,N) ,
\end{equation}
while the space of \emph{motivic Schwartz-Bruhat functions} on~$F^n$ is defined by
\begin{equation}
 \mathscr S(F^n)
 = \varinjlim_{M,\iota_*}\varinjlim_{N,\pi^*} \mathscr S(F^n;M,N). 
\end{equation}
These spaces have a ring structure, but $\mathscr S(F^n)$ has no unit element; 
the natural injection $\mathscr S(F^n)\subset\mathscr D(F^n)$ 
is a morphism of rings.
We denote by $\mathbf 1_{(F^\circ) ^n}$ the class in~$\mathscr S(F^n)$ of
the unit element of $\mathscr S(F^n; 0,0)$.

Observe that $\iota_*$ commutes with the sum over points,
while $\pi^*$ only commutes up to multiplication by~$\Lef^n$. 
Consequently, the integral of a Schwartz-Bruhat function
does not depend on the choice of a level~$(M,N)$ at which it is defined.
This gives rise to an additive map $\mathscr S(F^n)\ra \ExpMot_k$,
denoted $\phi\mapsto \int_{F^n}\phi$.
For every subset $W$ of~$F^n$ whose characteristic function~$\mathbf 1_W$
is a motivic Schwartz-Bruhat function, one also writes
$\int_W \phi=\int_{F^n}\phi\mathbf 1_W$.

\paragraph{The Fourier kernel}
Let $r\colon F\ra k$ a non-zero $k$-linear map
which vanishes on $t^{a}F^\circ$ for some integer~$a$.
We define  the conductor $\nu$ of~$r$ as the smallest integer~$a$
such that $r$ vanishes on~$t^{a}F^\circ$.

In the sequel, our main source of such a linear form
will be  given by residues of differential forms.
Assume that $F$ is the completion at
a closed point~$s$ of a function field in one variable over~$k$, 
and let $\res_s\colon \Omega_{F/k}\to k$ be the residue map 
at the closed point~$s$ (\cite{tate1968}, p.~154).
Then fix some non-zero meromorphic differential form $\omega\in\Omega_{F/k}$
and set $r_s\colon F\to k$, $x\mapsto \res_s(x\omega)$.
In this case, the conductor of~$r$ 
is equal to the order of the pole of~$\omega$
(Theorem~2 of~\cite{tate1968}; see also~\S\ref{residue} below).

The kernel of the Fourier transform is the element
of~$\mathscr D(F^2)$ 
informally written 
\[ (x,y)\mapsto \psi(r(xy))= \mathrm e(xy) . \]
Let us explicit this definition.

Let $x\in F$, let us write $x=\sum_n x_n t^n$, where $x_n=0$
for $n<\ord(x)$. One has $r(x)=\sum\limits_{n=\ord(x)}^{\nu-1} x_n r(t^n)$.
Consequently, restricted to the subset of~$F$
consisting of elements~$x$ such that $\ord(x)\leq M$,
$r$ can be interpreted as a linear morphism  $r^{(M,N)}\colon
\Aff_k^{(M,N)}\ra\Aff^1_k$,
for every integer~$N$ such that $N\geq \nu$.

Let $N''=M+M'+\min(N-M,N'-M')=\min(M'+N,M+N')$.
The product map $F\times F\ra F$ gives rise to a morphism
\[ \Aff_k^{(M,N)}\times \Aff_k^{(M',N')}
            \ra \Aff_k^{(M+M',N'')} . \]
Let us assume that $N''\geq \nu$.
Composing with $r^{(M+M',N'')}$, we obtain a morphism
\[  \Aff_k^{(M,N)}\times\Aff_k^{(M',N')} \to \Aff^1_k,\]
hence an element 
of $\ExpMot_{\Aff_k^{(M,N)}\times\Aff_k^{(M',N')}}$,
whose class in~$\mathscr D(F^2)$ is our kernel.

\paragraph{Fourier transformation}
The Fourier transform 
of a Schwartz-Bruhat function $\phi\in\mathscr S(F;M,N)$ 
is defined formally as
\[ \mathscr F\phi(y) = \int_F \phi(x)\mathrm e(xy)\,\mathrm dx. \]
More generally, we write $\langle x,y\rangle=\sum_{j=1}^nx_jy_j$ for the 
self-pairing of $F^n$ and define the Fourier transform of 
a Schwartz-Bruhat function $\phi\in\mathscr S(F^n;M,N)$  by
\[ \mathscr F\phi(y) 
= \int_{F^n} \phi(x)\mathrm e(\langle x,y\rangle)\,\mathrm dx, \]
where, we recall,
$\mathrm e(\cdot)$ is a short-hand notation for $\psi(r(\cdot))$.

Observe that $\phi\mapsto \mathscr F\phi$ is
$\GExpVarM_k$-linear.

Let us make the definition explicit, assuming
that $n=1$, $\phi$ is of the form~$[U,f]$,
where $(U,g)$ is a $\Aff_k^{(M,N)}$-variety and $f\colon U\ra\Aff^1$
is  a morphism. 
Then,  $ \mathscr F\phi $ is represented by
\[  \Lef^{-N} [U\times_g \Aff_k^{(M,N)}\times\Aff_k^{(M',N')}, f(u)+r(xy)] \]
in the Grothendieck group $\ExpMot_{\Aff_k^{(M',N')}}$, where 
we define $U\times_g \Aff_k^{(M,N)}\times\Aff_k^{(M',N')}$ 
as the fiber product of the $\Aff_k^{(M,N)}$-varieties
$(U,g)$ and $(\Aff_k^{(M,N)}\times\Aff_k^{(M',N')},\pr_1)$,
viewed as an $\Aff_k^{(M',N')}$-variety,
the structural morphism 
\[ U\times_g \Aff_k^{(M,N)}\times \Aff_k^{(M',N')}
\ra\Aff_k^{(M',N')} \] being the projection to the third factor.
For this to make sense, we only need to take \tcr{$M'\leq \nu-N$}
and $N'\geq \nu-M$.

\begin{prop}\label{prop.local-vanishing}
Let $\nu$ be the conductor of~$r$.
Then for  every $\phi\in \mathscr S(F^n;M,N)$, 
one has \mbox{$\mathscr F\phi \in\mathscr S(F^n;\nu-N,\nu-M)$.}
\end{prop}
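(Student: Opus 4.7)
The plan is to verify the two defining conditions of $\mathscr{S}(F^n;\nu-N,\nu-M)$ pointwise in $y\in F^n$, upgrading each pointwise identity to a motivic identity via Lemma~\ref{lemm.motivic-function}. Concretely I would establish (i) invariance of $\mathscr{F}\phi$ under shifts by elements of $(t^{\nu-M}F^\circ)^n$, and (ii) the vanishing $\mathscr{F}\phi(y)=0$ whenever some coordinate $y_j$ of $y$ has order strictly less than $\nu-N$. Together these express $\mathscr{F}\phi$ as (the canonical image of) an element of $\ExpMot_{\Aff_k^{n(\nu-N,\nu-M)}}=\mathscr{S}(F^n;\nu-N,\nu-M)$.

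The invariance (i) is a direct computation from the explicit formula. Writing $\phi=[U,f]$ with structure map $u\colon U\to\Aff_k^{n(M,N)}$, the classes $\mathscr{F}\phi(y_0)$ and $\mathscr{F}\phi(y_0+y_0')$ differ only by the character factor $\psi\bigl(\sum_j r(u_j(u'')\,y_{0,j}')\bigr)$. For $y_0'\in(t^{\nu-M}F^\circ)^n$, each component $u_j(u'')$ has valuation $\geq M$ and each $y_{0,j}'$ has valuation $\geq\nu-M$, so every product $u_j(u'')y_{0,j}'$ has valuation $\geq\nu$; since $r$ vanishes on $t^{\nu}F^\circ$ (with the residual ambiguity in the product also of valuation $\geq\nu$), the extra factor is identically $1$ and the two classes coincide.

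The vanishing (ii) is the main obstacle, because the Fourier formula at level $(M',N')$ requires $M'\geq\nu-N$ and so cannot be evaluated directly at $y_0$ with some $\ord(y_{0,j_0})<\nu-N$. My plan is to first pass to a finer representation of $\phi$: iterating $\pi^*$, replace $\phi$ by its image in $\mathscr{S}(F^n;M,N^*)$ with $N^*=\nu-\ord(y_{0,j_0})$, so that the formula becomes valid down to $M'=\nu-N^*=\ord(y_{0,j_0})$ and evaluation at $y_0$ is permitted. The source variety becomes $U\times\Aff_k^{n(N^*-N)}$, and expanding the integrand at $y=y_0$ separates the new coordinates $v=(v_{j,i})_{1\leq j\leq n,\ N\leq i<N^*}$ from the rest: the function on the source takes the form $f(u'')+\sum_j r(u_j(u'')\,y_{0,j})+L(v)$ with $L(v)=\sum_{j,i}v_{j,i}\,r(t^iy_{0,j})$ linear in $v$ and independent of $u''$. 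Applying the product rule $[X\times Y,\pr_1^*f+\pr_2^*g]=[X,f][Y,g]$ in $\GExpVar$ factors $\mathscr{F}\phi(y_0)$ with one factor equal to $[\Aff_k^{n(N^*-N)},L(v)]$.

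It remains to show $L$ is a nontrivial linear form. I would take the witness index $i_0=\nu-1-\ord(y_{0,j_0})$, which lies in $[N,N^*)$ by our choices; the coefficient $r(t^{i_0}y_{0,j_0})$ receives contributions only from the lowest-order term of $y_{0,j_0}$, since all higher-order terms in the expansion have valuation $\geq\nu$ and are killed by~$r$. The surviving contribution is the nonzero leading coefficient of $y_{0,j_0}$ multiplied by $r(t^{\nu-1})$, and the latter is nonzero because the minimality of the conductor $\nu$ forces $r$ to induce a nonzero linear form on $t^{\nu-1}F^\circ/t^{\nu}F^\circ$. Lemma~\ref{lemm.linear} then gives $[\Aff_k^{n(N^*-N)},L(v)]=0$, and hence $\mathscr{F}\phi(y_0)=0$, as required.
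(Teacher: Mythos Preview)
Your proof is correct, but it takes a substantially more elaborate route than the paper. In the paper the proposition is stated without a proof environment; it is meant to follow immediately from the explicit description of $\mathscr F\phi$ given in the preceding paragraph, which constructs $\mathscr F\phi$ directly as a class in $\ExpMot_{\Aff_k^{n(M',N')}}$ and observes that the kernel $r(xy)$ is already well-defined for the minimal choice $(M',N')=(\nu-N,\nu-M)$. So for the paper the proposition is essentially a restatement of the construction.

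You instead characterize membership in $\mathscr S(F^n;\nu-N,\nu-M)$ via invariance under $(t^{\nu-M}F^\circ)^n$ and vanishing outside $(t^{\nu-N}F^\circ)^n$, and verify both by passing to a refined level $(M,N^*)$ and factoring off a linear exponential sum killed by Lemma~\ref{lemm.linear}. This is more work, but it buys you something the paper leaves implicit: it checks that the Fourier transform is compatible with the transition maps of the colimit, namely $\mathscr F(\pi^*\phi)=\iota_*\mathscr F\phi$, so that $\mathscr F$ is well-defined on $\mathscr S(F^n)$ and not just level by level. One small clarification you should add in step~(ii): choose $j_0$ to minimize $\ord(y_{0,j})$, so that every coordinate of $y_0$ lies in $t^{M'}F^\circ$ with $M'=\ord(y_{0,j_0})$ and the refined Fourier formula at level $(M',N')$ is legitimately applicable.
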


\begin{theo}[Fourier inversion]
\label{theo.fourier-local}
Let $\nu$ be the conductor of~$r$.
Then for  every $\phi\in \mathscr S(F^n;M,N)$, 
one has $\mathscr F\mathscr F\phi(x)=\Lef^{-n\nu}\phi(-x)$.
\end{theo}
\begin{proof}
For simplicity of notation, we assume that $n=1$.
We may assume that $\phi$ is represented by $[U,f]$ as above.
To compute $\mathscr F\mathscr F\phi$, we may set
$(M',N')=(\nu-N,\nu-M)$ and $(M'',N'')=(M,N)$, so that
$\mathscr F\mathscr F\phi$
is represented
by 
\[ \Lef^{-N-N'}[U\times_g \Aff_k^{(M,N)}\times\Aff_k^{(M',N')}\times \Aff_k^{(M'',N'')}, f(u)+r(xy)+r(yz)].\]
The contribution of the part where $x+z\neq 0$ is zero,
because of Lemma~\ref{lemm.vanish}.
The part where $x+z=0$ is  equal to
\begin{align*} 
\Lef^{-N-N'} [U\times_g\Aff_k^{(M',N')}\times\Aff_k^{(M'',N'')},f(u)] & =
\Lef^{-N-M'} [U\times_g\Aff_k^{(M'',N'')},f(u)] \\
&=
  \Lef^{-\nu} [U,f] ,\end{align*}
where $U$ is viewed as an $\Aff_k^{(M'',N'')}$-variety
via the morphism $-g$. This proves the theorem.
\end{proof}

\paragraph{}
This theory is extended in a straightforward way to products of local fields.
Let $(F_s)_{s\in S}$ be a finite family of fields as above,
fields of fractions of complete discrete valuation rings~$F_s^\circ$,
with local parameters~$t_s$ and residue fields~$k_s$.
Assume that for each~$s$, $k_s$ is a finite extension of~$k$.
In practice, one will start from the function field~$F$ 
of a (projective, smooth, geometrically
connected) $k$-curve~$C$, $S$ will be a set of closed points of~$C$,
and for every~$s\in S$, the field~$F_s$ will be the completion of~$F$ 
at the point~$s$.

For every $s\in S$, write $\Res_{k_s/k}$ for the functor of Weil restriction
of scalars; one has $\Res_{k_s/k}(\Aff^m_{k_s})\simeq \Aff^{m[k_s:k]}_k$.
For every family $(M_s,N_s)_{s\in S}$ of integers such that $M_s \leq N_s$,
one then sets
\[ V(n(M_s,N_s)) = \prod_{s\in S} \Res_{k_s/k} \Aff^{n(N_s-M_s)}_{k_s}, \]
and  defines the spaces of Schwartz-Bruhat functions
of levels $(M_s,N_s)$ on $\prod_{s\in S} F_s^n$ by 
\begin{equation}
\mathscr S (\prod_{s\in S} F_s^n; (M_s,N_s)) = \ExpMot_{V(n(M_s,N_s))} .
\end{equation}
One then sets
\begin{equation}
 \mathscr S(\prod_{s\in S} F_s^n) = \varinjlim_{(M_s,\iota_*)}\varinjlim_{(N_s,\pi^*)} \mathscr S (\prod_{s\in S} F_s^n; (M_s,N_s)).
\end{equation}

There is a natural morphism of rings 
\[ \bigotimes_{s\in S} \mathscr S(F_s^n) \to \mathscr S (\prod_{s\in S}F_s^n) .\]
Contrary to the classical arithmetic case, it is not surjective in general.

The definition of the integral extends to a linear map
$\mathscr S(\prod_{s\in S} F_s^n) \ra \ExpMot_k$.

Let $(r_s\colon F_s\to k)$ be a family of non-trivial \tcb{$k$-linear} maps, let $\nu_s$
be the conductor of~$r_s$.
Then the definition of the Fourier Transform $\mathscr F$
extends naturally to $\mathscr S(\prod_{s\in S}F_s^n)$.
For every $\phi\in\mathscr S(\prod_{s\in S}F_s^n)$, one sets
\[ \mathscr F\phi((y_s)) = \int_{\prod_{F_s^n}} \phi(x) e(\sum \langle x_s,y_s\rangle) \prod\mathrm dx_s. \]
Fourier inversion still holds, with the same proof:
\[ \mathscr F\mathscr F\phi(x) = \Lef^{-\tcb{n}\sum [k_s:k]\nu_s} \phi(-x).
\]

\Subsection{Global Fourier Transforms}

\paragraph{}
Let $k$ be a perfect field,
let $C $ be a projective, geometrically connected, smooth curve over~$k$,
and let $F=k(C )$ be its field of functions.
We fix a non-zero meromorphic 
differential form~$\omega\in\Omega^1_{F/k}$.

One can interpret the field $F=k(C )$ as the $k$-points
of an ind-$k$-variety. The simplest way to do so consists
maybe in considering the 
family 
of all Riemann--Roch
spaces $\mathscr L(D)=\mathrm H^0(C , \mathscr O(D))$, 
indexed by effective divisors~$D$ on~$C $.
Concretely, $\mathscr L(D)$ is the set
of non-zero rational functions~$f$ on~$C $  such that 
$\div(f)+D\geq 0$, together with the~$0$ function. It is
a finite dimensional $k$-vector space and we 
view it as a $k$-variety.
The natural  inclusions from $\mathscr L(D)$ to~$\mathscr L(D')$,
where $D$ and~$D'$ are effective divisors such that $D'-D$ is effective,
give this family the structure of an inductive system, the limit
of which is interpreted as~$k(C )$.

\paragraph{Global Schwartz-Bruhat functions}\label{par.global-sb}
For every closed point~$s\in C$, write $\ord_s$ for the corresponding
normalized valuation on~$F$, $F_s$ for the its completion,
and $F_s^\circ$ for the valuation ring of~$F_s$;
we also fix a local parameter $t_s$ at~$s$.

The adele ring~$\AD_F$ of~$F$ is the subring of $\prod_{s\in C} F_s$
consisting of families $(x_s)$ such that $x_s\in F_s^\circ$
for all but finitely many $s$. (By abuse of notation,
the condition ``$s\in C$'' means that $s$ belongs to the set of
closed points of~$C$.)

In the classical arithmetic setting,
the ring~$\AD_F$ has a locally compact totally disconnected 
topology, and the space of Schwartz-Bruhat functions on~$\AD_F^n$  is
the ring of  real valued locally constant with compact support on~$\AD_F^n$. 

We now describe its geometric analogue $\mathscr S(\AD_F^n)$.

Let $S$ and $S'$ be finite sets of closed points of~$C$ 
such that $S\subset S'$.
There is a natural morphism of rings:
\[ j_S^{S'}\colon  \mathscr S(\prod_{s\in S} F_s^n) \to \mathscr S(\prod_{s\in S'} F_s^n), \quad \phi\mapsto \phi\otimes \bigotimes_{s\in S'\setminus S} \mathbf 1_{(F_s^\circ)^n}. \]
The ring $\mathscr S(\AD_F^n)$ of 
global motivic Schwartz-Bruhat function on~$\AD^n_F$ is defined by
\[ \mathscr S(\AD_F^n) = \varinjlim_{S\subset C,j_S^{S'}} \mathscr S(\prod_{s\in S}F_s^n). \]

It is important to observe that the global motivic Schwartz-Bruhat functions
on~$\AD_F^n$ induce the characteristic function of $(F_s^\circ)^n$ 
at all but finitely many closed points $s\in C$. This is a notable
difference with the arithmetic setting.

\tcr{\paragraph{Simple functions}}\label{par.simple-functions}
In the classical arithmetic case, 
simple functions are  characteristic
functions of a ball, or of products of balls. 
Let us describe their analogues in the motivic setting.
Let $S$ be  a finite subset of closed points of~$C$,
let $a=(a_s)_{s\in S}\in \prod_{s\in S}F_s$,
let $(M_s,N_s)_{s\in S}$ be a family of pairs of integers
such that $\ord(a_s)\geq M_s$ for every $s\in S$.
Let $W=\prod_{s\in S}\Res_{k_s/k}\Aff_{k_s}^{n(M_s,N_s)}$,
let $W_a=\Spec (k)$ and let $W_a\to W$  be the canonical map
induced, for every $s\in S$, by the $t_s$-adic expansion of~$a_s$.
The motivic function on~$W$ associated with the pair $(W_a\ra W,0)$
is called a \emph{simple function}. The corresponding
Schwartz-Bruhat function on~$\prod_{s\in S}F_s^n$
represents the characteristic
function of the product of the balls of centers~$a_s$
and radius $N_s$ in~$F_s^n$.

More generally, let us consider a $k$-variety~$Z$ and 
a morphism $u=(u_s)\colon Z\to W$; 
let $\phi\in\ExpMot_{W\times_k Z}$ be  
the motivic function associated with $(Z,0)$, where
$Z$ is considered as a $W\times_kZ$-variety through
the morphism $u\times\Id_Z$.
For each $z\in Z$, we write $\phi_z$ for the motivic function
on~$W_{k(z)}$ deduced from~$\phi$. When $z\in Z(k)$,
the corresponding Schwartz-Bruhat function on $\prod_{s\in S}F_s^n$
represents the characteristic function of the product
of the polydiscs of radius~$N_s$ and centers $u_s(z)$.
Consequently, we call~$\phi$ 
a \emph{family of simple functions parameterized by the
$k$-variety~$Z$}.

Let $\chi\in\ExpMot_Z$ be a motivic function on~$Z$, represented 
by $[X\xrightarrow g Z,f]_Z$, 
where $X$ is a $Z$-variety
and $f\colon X\ra\Aff^1$ is a morphism.
We then define the Schwartz-Bruhat  function 
$\sum_{z\in Z}\chi(z)\phi_z$
on~$\AD^n_F$
as the one represented by the pair
$[X\xrightarrow {u\circ g} W, f]$.
By linearity, this definition is extended to every element~$\chi$
of~$\ExpMot_Z$.

\begin{lemm}\label{lemm.simple}
Any global Schwartz-Bruhat function on $\AD^n_F$
can be written in this way.
\end{lemm}
\begin{proof}
Let $\Phi$ be a global Schwartz-Bruhat function on $\AD^n_F$,
represented by a pair $[Z,f]$, where 
$Z$ is a variety over $W=\prod_s \Res_{k_s/k}\Aff_{k_s}^{n(M_s,N_s)}$,
for some finite set~$S$ of closed points of~$C$ and integers $(M_s,N_s)$,
and $f\colon Z\ra\Aff^1$.
Let $\phi$ be the family of simple functions
parameterized by~$W$ given by the pair $(W\xrightarrow{\Id} W,0)$.
One checks readily that 
$\Phi = \sum_{w\in W} \Phi\phi_w  $.
\end{proof}

\paragraph{Summation over rational points}
Let $\phi$ be a global Schwartz-Bruhat function on~$\AD^n_F$,
represented by a class $\phi_S$ 
in~$\ExpMot_k\big(\prod_{s\in S}\Res_{k_s/k}\Aff_{k_s}^{n(M_s,N_s)}\big)$,
for some finite set $S$ of closed points of~$C$
and some family $(M_s,N_s)_{s\in S}$.

Consider the divisor~$D=-\sum M_s [s]$ on~$C $.
For every $s\in S$, the natural embedding of~$F=k(C )$
into the field~$F_s$ maps $\mathscr L(D)$ into $t^{M_s}F_s^\circ$.
This gives rise to a  morphism of algebraic varieties
$\alpha\colon \mathscr L(D)^n\ra \big(\prod_{s\in S}\tcb{\Res_{k_s/k}}\Aff_k^{(M_s,N_s)}\big)^n$.
We then 
define $\sum_{x\in F^n}\phi(x)$ as the image
in~$\ExpMot_k$
of the element  $\alpha^* \phi_S $ of $ \ExpMot_{\mathscr L(D)^n}$.
It does  not depend on the choice of the set~$S$
nor on the choice of the integers~$(M_s,N_s)$
and of the class~$\phi_S$.

Let us give a more explicit formula, 
assuming that $\phi_S$ is of the form $[X,f]$,
where $W= \prod_{s\in S}\Res_{k_s/k}\Aff_{k_s}^{n(M_s,N_s)}$,
$X$ is a $W$-variety
and $f\colon X\ra\Aff^1$ is a morphism.
In that case, one has
\begin{equation}
\sum_{x\in F^n}\phi(x)
= [ \mathscr L(D)^n\times_{W} X,f\circ\pr_2] .
\end{equation}

\paragraph{Reminders on residues and duality for curves}\label{residue}
We need to recall a few results
concerning residues, duality and the Riemann--Roch theorem
on smooth curves.

We fix a non-zero meromorphic  differential form $\omega\in\Omega_{F/k}$.
Let $\nu_s$ be the order of the pole,
or minus the order of the zero, of~$\omega$ at~$s$,
and let $\nu$ be the divisor~$\sum\nu_s [s]$ on~$C $.
One has $\deg(\nu)=2-2g$, where $g$ is the genus of~$C $.

For every closed point $s\in C$, we define a map $r_s\colon F_s\to k$
by $r_s(x)=\res_{C,s}(x\omega)$,
where $\res_{C,s}\colon\Omega_{F/k}\to k$ is Tate's residue 
(\cite{tate1968}) on the curve~$C$ at~$s$;
since the field~$k$ is perfect,
it is non-zero and its conductor is equal to~$\nu_s$.
This follows from Theorem~2 of~\cite{tate1968} if $s$ is a rational point
of the curve~$C$; in the general case, one checks that 
\[ \res_{C,s} (\omega) = \Tr_{k(s)/k} ( \res_{C_{k(s)},s}(\omega)), \]
where we indicated the curve as in index.

Let $D$ be a divisor on~$C $.
Let $\mathscr L(D)$ be the set of rational functions~$y\in F^\times$
such that $\div(y)+D\geq 0$ to which we adjoin~$0$; this
is a finite dimensional $k$-vector space.

Let $\Omega(D)$ be the set of 
meromorphic forms $\alpha\in\Omega_{F/k}$
such that $\div(\alpha)\geq D$ (together with $\alpha=0$).
The map $y\mapsto y\omega$ from~$F$ to~$\Omega^1_{F/k}$
induces an isomorphism
$\mathscr L(\div(\omega)-D)\to\Omega(D)$.

We embed~$F$ diagonally in~$\AD_F$.
For every divisor~$D$, let~$\AD_F(D)$ be the subspace of~$\AD_F$
consisting of families $(x_s)$ such that \mbox{$\div(x_s)+\ord_s(D)\geq 0$}
for every closed point~$s\in C$.  
There is an isomorphism of $k$-vector spaces (see~\cite{serre60},
Chapitre~II, \S5, proposition~3; see also~\cite{tate1968},
p.~157)
\[ \mathrm H^1(\mathscr L(D)) \simeq \AD_F/ (\AD_F(D)+F). \]
According to Serre's duality theorem
(\cite{serre60}, Chapitre~II, \S8, théorème~2;
see also~\cite{tate1968}, Theorem~5), 
the morphism
\[ \theta\colon \Omega_{F/k} \ra \Hom(\AD_F,k), \qquad  \alpha\mapsto  \left(
   (x_s)\mapsto \sum_s \res_s(x_s\alpha)\right) \]
identifies
$\Omega(D)$ with the orthogonal of $\AD_F(D)+F$ in~$\Hom(\AD_F,k)$,
\ie, with the dual of $\mathrm H^1(\mathscr L(D))$.
This contains the theorem of residues according to which
\[ \sum_{s\in C} \res_s(x\omega) = 0 \]
for every $x\in F$.

\paragraph{Global Fourier transformation}
Observe that if $s$ is any closed point of~$C$
such that $\nu_s=0$, then $\mathbf 1_{F_s^\circ}$
is its own Fourier transform.
Consequently, we may define the Fourier transform~$\mathscr F\phi$ 
of every global Schwartz-Bruhat function~$\phi\in\mathscr S(\AD_F^n)$
as the image in~$\mathscr S(\AD_F^n)$ 
of~$\mathscr F\phi_S$,
where $S$ is any finite set of places
such that $\nu_s=0$ for $s\not\in S$,   
and $\phi_S\in\mathscr S( \prod_{s\in S}F_s^n)$ is a representative of~$\phi$.
By construction, $\mathscr F\phi$
is itself a global Schwartz-Bruhat function on the ``dual'' 
space~$\AD^n_F$.

\begin{theo}[Fourier inversion formula]
For every $\phi\in\mathscr S(\AD_F^n)$, one has 
\[\mathscr F\mathscr F \phi (x)= \Lef^{n(2g-2)}  \phi(-x). \]
\end{theo}
\begin{proof}
When $\phi$ is a simple function, this is nothing but
the Fourier inversion formula~\ref{theo.fourier-local}.
The general case follows from Lemma~\ref{lemm.simple}.
\tcr{Indeed, if $\phi$ is written as a sum of simple functions $\sum \phi(z)\psi_z$,
it follows from the definitions
that $\mathscr F\phi=\sum\phi(z) \mathscr F\psi_z$,
so that 
\[ \mathscr F\mathscr F\phi(x)=\sum\phi(z) \mathscr F\mathscr F\psi_z(x)
=\sum\phi (z) \Lef^{n(2g-2)} \psi_z(-x)
= \Lef^{n(2g-2)}\phi(-x).\qedhere\]}
\end{proof}

\begin{theo}[Motivic Poisson formula]
\label{theo.poisson}
Let $\phi\in\mathscr S(\AD_F^n)$. 
Then, 
\[ \sum_{x\in F^n} \phi(x) 
= \Lef^{(1-g)n} \sum_{y\in F^n}\mathscr F\phi(y). \]
\end{theo}
\begin{proof}
For simplicity of notation, we assume that $n=1$.
By Lemma~\ref{lemm.simple}, we
may also assume that $\phi$ 
is a simple function $\otimes_{s\in S}\phi_s$, where 
for each $s\in S$,
$\phi_s$ is the characteristic function of the ball
of center $a_s\in F_s$ and radius $N_s$.
Let $D$ be the divisor $\sum_{s\in S} N_s s$ on~$C $.

For every~$s\in S$,
$\mathscr F\phi_s$ is a Schwartz-Bruhat function 
on $F_s$ 
and 
\[ \mathscr F\phi(y_s) = 
\begin{cases}
\psi( \res_s(a_s y_s\omega))\Lef^{-N_s}  
       & \text{if $\ord_s(\tcb{y_s})+\ord_s(D)\geq 0$;} \\
0 & \text{otherwise.}
\end{cases}\]
Then $\mathscr F\phi$ is a global Schwartz-Bruhat function
on $\AD_F$, represented by 
$\bigotimes_{s\in S} \mathscr F\phi_s$
and
\[ \mathscr F\phi(y) = 
\begin{cases}
\psi( \sum_{s\in S}\res_s(a_s y_s\omega))\Lef^{-\deg(D)}  
       & \text{if $\div(y\omega)+D\geq 0$;} \\
0 & \text{otherwise.}
\end{cases}\]
Recall that the map $y\mapsto y\omega$ identifies
$\mathscr L(\div(\omega)+D)$ with  $\Omega(-D)$.
Let $f\colon \mathscr L(\div(\omega)+D)\to k$
be the linear map $y\mapsto \langle \theta(y\omega),(a_s)\rangle$;
it is identically zero if and only if $(a_s)$ belongs
to the orthogonal~$\Omega(-D)^\perp$ of $\Omega(-D)$ with respect
to the Serre duality pairing.
By Lemma~\ref{lemm.linear}, we thus have
\begin{align*}
 \sum_{y\in F} \mathscr F\phi(y)
& =\Lef^{-\deg(D)} \sum_{y\in\mathscr L(\div(\omega)+D)} \psi(f(y))  \\
& =
\begin{cases} \Lef^{-\deg(D)+\dim\mathscr L(\div(\omega)+D)}
& \text{if $(a_s)\in \Omega(-D)^{\perp}$,} \\
0 & \text{otherwise.} 
\end{cases}
\end{align*}
Moreover, the Riemann--Roch formula asserts that
\begin{align*}
 \dim\mathscr L(-D) & = \dim \mathrm H^1(C ,-D)-\deg(D)+1-g \\
& = \dim \Omega(-D) - \deg(D)+1-g \\
& = \dim \mathscr L(\div(\omega)+D)-\deg(D) + 1-g.\end{align*}
Consequently,
\[ \Lef^{1-g} \sum_{y\in F} \mathscr F\phi(y) = 
\begin{cases}
   \Lef^{\dim\mathscr L(-D)} & \text{if $(a_s)\in\Omega(-D)^\perp$,}  \\
0 & \text{otherwise}.
\end{cases}\]

Let us now compute the left hand side of the Poisson formula.
In the case where
\[ (a_s)\in\Omega(-D)^\perp = \AD_F(-D)+F , \]
there exists $a\in F$ such that $\ord_s(a-a_s)\geq N_s$
for all~$s$.
Then, 
\[ \phi(x) = \begin{cases} 1 & \text{if $x-a\in \mathscr L(-D)$,}\\
0 & \text{otherwise} \end{cases} \]
so that
\[ \sum_{x\in F} \phi(x) = \sum_{x\in F}\phi(x-a)
 = \sum_{x\in \mathscr L(-D)} 1
= \Lef^{\dim\mathscr L(-D)}.
\]
In the other case, there does not exist any $a\in k(C )$
such that $\ord_s(a-a_s)\geq N_s$ for all~$s$. Then,
$ \phi(x)=0$ for all $x\in F$ and $\sum_{x\in F}\phi(x)=0$.
In both cases, this concludes the proof of the motivic Poisson formula.
\end{proof}

\begin{rema}
By Fourier inversion, we have
$\mathscr F\mathscr F\phi(x)
=\Lef^{-n\deg(\nu)}\phi(-x)=\Lef^{(2-2g)n}\phi(-x)$.
Consequently, 
if we apply the Poisson formula to $\mathscr F\phi$, 
we obtain
\[ \sum_{y\in F^n} \mathscr F\phi(y)
= \Lef^{(1-g)n} \sum_{x\in F^n} \mathscr F\mathscr F\phi(x)
=\Lef^{(g-1)n} \sum_{x\in F^n} \phi(x), \]
as expected.
\end{rema}

\section{Further preliminaries}\label{s.prelim}

\Subsection{Motivic invariants}
\label{ss.motivic}

Let $k$ be a field.
For every~$m\geq0$, let $\GVar_k^{\leq m}$ be the subgroup of~$\GVar_k$
generated by classes of varieties of dimension~$\leq m$.
If $x\in\GVar_k^{\leq m}$ and $y\in\GVar_k^{\leq n}$, 
then $xy\in\GVar_k^{\leq m+n}$.
Let $(\GVarM_k^{\leq m})_{m\in\Z}$ be the similar filtration
on~$\GVarM_k$; explicitly, $\GVarM_k^{\leq m}$ is generated
by fractions $[X][Y]^{-1}$ where $X$ is a $k$-variety,
$Y$ is a product of varieties of the form $\Aff^1$, $\Aff^a\setminus\{0\}$ (for $a\geq 1$), and $\dim(X)-\dim(Y)\leq m$.
For every class~$x\in\GVarM_k$, let $\dim(x)\in\Z\cup\{-\infty\}$ 
be the infimum of the integers~$m\in\Z$ such that $x\in \GVarM_k^{\leq m}$.
For every $x,y\in\GVarM_k$, 
one has $\dim(x+y)\leq\max(\dim(x),\dim(y))$ and $\dim(xy)\leq \dim(x)+\dim(y)$;
moreover, $\dim(x\Lef^n)=\dim(x)+n$ for every $n\in\Z$.

Assume that $k$ is algebraically closed.
For every $k$-variety~$X$, 
we denote by $\mathrm H^p(X)$ (resp. $\mathrm H^p_{\mathrm c}(X)$) 
its $p$th singular cohomology group (resp. with compact support)
and $\Q$-coefficients (if $k=\C$),
or its $p$th étale cohomology group (resp. with proper support)
and $\Q_\ell$-coefficients
(for some fixed prime number~$\ell$ distinct from the characteristic
of~$k$).
There is a unique ring morphism~$\PC$ from~$\GVar_k$
to the polynomial ring~$\Z[t]$ such that for every
variety~$X$, $\PC([X])$ is the Poincaré polynomial of~$X$.
Its definition relies on the weight filtration on the
cohomology groups with compact support of~$X$.
If $k$ has characteristic zero (which will be the case below),
the morphism~$\PC$ is characterized
by its values on projective smooth varieties:
for every such~$X$, one has
\[ \PC([X]) =
\PC_X(t) 
= \sum_{p=0}^{2\dim (X)} \dim(\mathrm H^p(X)) t^p. \]
This implies that for every variety~$X$, the leading term of~$\PC([X])$
is given by $\kappa(X) t^{2\dim(X)}$, where
$\kappa(X)$ is the number  
of irreducible components of~$X$ of dimension~$\dim(X)$.

One has $\PC(\Lef)=\PC([\P^1])-1=t^2$; for every $a\geq 1$,
$\PC(\Lef^a-1)=t^{2a}-1=t^{2a}(1-t^{-2a})$ is invertible
in the ring~$\Z\lbra t^{-1}\rbra [t]$, with inverse
\[\sum_{m\geq 1} t^{-2ma}  . \]
Consequently, the morphism~$\PC$
extends uniquely to a ring morphism
from~$\GVarM_k$ to the ring~$\Z\lbra t^{-1}\rbra [t]$.
For every element $x\in\GVarM_k$, one has
\[ \dim(x)\geq \frac12 \deg(\PC(x)). \]

\begin{lemm}\label{lemm.elements-simples}
Let $A$ be a ring and let $P_1,\dots,P_r\in A[T]$ be polynomials with
coefficients in~$A$. Assume that for every~$i$,
the leading coefficient of~$P_i$ is a unit in~$A$, and that for every
distinct~$i,j$, the resultant of~$P_i$ and~$P_j$
is a unit in~$A$.
Then, for every polynomial $P\in A[T]$ and every family $(n_1,\dots,n_r)$
of nonnegative integers, there exists a unique family
$(Q_{i,j})$ of polynomials in~$A[T]$,
indexed by pairs of integers~$(i,j)$ 
such that $1\leq i\leq r$ and $1\leq j\leq n_i$,
and a unique polynomial~$Q\in A[T]$ such that
$ \deg(Q_{i,j}) \leq \deg(P_i)-1$  for all $i,j$ and
\[
P(T) = Q(T)\prod_{i=1}^r P_i(T)^{n_i}
+ \sum_{i=1}^r \sum_{j=1}^{n_i}Q_{i,j}(T) P_i(T)^{n_i-j}\prod_{k\neq i}P_k(T)^{n_k}  .\]
\end{lemm}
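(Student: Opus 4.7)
The statement is a version of partial fraction decomposition over a general coefficient ring~$A$. Two properties of the $P_i$ make everything work: since each leading coefficient is a unit, Euclidean division by any power $P_i^n$ is valid in $A[T]$ (quotient and remainder exist and are unique, and the remainder has degree $<\deg P_i^n$); and since each pairwise resultant $\Res(P_i,P_j)$ is a unit, the standard resultant--Bezout identity (after dividing by the resultant) yields $U_{ij},V_{ij}\in A[T]$ with $U_{ij}P_i+V_{ij}P_j=1$. The plan is to deduce existence from Bezout and uniqueness from Euclidean division.

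For existence, first combine the Bezout identities (for instance by induction on $r$, using that a power $P_i^{n_i}$ is still coprime to $P_j^{n_j}$ since one can raise $U_{ij}P_i+V_{ij}P_j=1$ to a suitable power) to obtain a decomposition
\[
P(T)=\tilde Q(T)\prod_{i=1}^r P_i(T)^{n_i}+\sum_{i=1}^r R_i(T)\prod_{k\neq i}P_k(T)^{n_k}
\]
with $\tilde Q,R_i\in A[T]$ and $\deg R_i<n_i\deg P_i$. Next, for each~$i$ expand $R_i$ in the $P_i$-adic basis by iterating Euclidean division by $P_i$: write
\[
R_i(T)=\sum_{j=1}^{n_i}Q_{i,j}(T)\,P_i(T)^{n_i-j}
\]
with $\deg Q_{i,j}<\deg P_i$ (the degree bound on $R_i$ ensures the expansion terminates at level~$n_i$ and contributes no further term). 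Substituting back and folding the overall $P_i$-adic remainder into $\tilde Q$ gives the desired~$Q$ and~$Q_{i,j}$.

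For uniqueness, suppose the whole right-hand side is zero. Fix~$i_0$ and reduce modulo $P_{i_0}^{n_{i_0}}$. Every term other than those with $i=i_0$ is divisible by $P_{i_0}^{n_{i_0}}$, so
\[
0\equiv\Bigl(\prod_{k\neq i_0}P_k^{n_k}\Bigr)\sum_{j=1}^{n_{i_0}}Q_{i_0,j}\,P_{i_0}^{n_{i_0}-j}\pmod{P_{i_0}^{n_{i_0}}}.
\]
The identity $U_{i_0 k}P_{i_0}+V_{i_0 k}P_k=1$ shows $V_{i_0 k}P_k\equiv 1\pmod{P_{i_0}}$, which lifts by a geometric-series inverse of $1-U_{i_0 k}P_{i_0}$ to show $P_k$, hence $\prod_{k\neq i_0}P_k^{n_k}$, is a unit in $A[T]/(P_{i_0}^{n_{i_0}})$. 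Thus $\sum_{j=1}^{n_{i_0}}Q_{i_0,j}P_{i_0}^{n_{i_0}-j}\equiv 0\pmod{P_{i_0}^{n_{i_0}}}$; but this sum has degree $<n_{i_0}\deg P_{i_0}=\deg P_{i_0}^{n_{i_0}}$, so it is zero as a polynomial, and uniqueness of the $P_{i_0}$-adic expansion (Euclidean division) forces every $Q_{i_0,j}=0$. Running this for each~$i_0$ kills all the $Q_{i,j}$, after which the remaining equation $Q\prod_i P_i^{n_i}=0$ forces $Q=0$ since the product has unit leading coefficient.

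The main obstacle is purely bookkeeping: one must check that the two classical tools (Euclidean division and Bezout coprimality) really continue to hold over~$A$, and that the degree bounds on the $Q_{i,j}$ match the truncation of the $P_i$-adic expansion of the intermediate $R_i$. The unit-resultant and unit-leading-coefficient hypotheses are precisely what is needed, and no further ingredient is required.
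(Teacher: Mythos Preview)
Your proposal is correct and follows essentially the same approach as the paper: Bezout identities from the unit-resultant hypothesis to separate the contributions of the different~$P_i$, followed by iterated Euclidean division (the $P_i$-adic expansion) made possible by the unit leading coefficients. The paper organizes existence by first treating $r=1$ and then reducing the general case to it via $1=\sum_i U_i\prod_{k\neq i}P_k^{n_k}$, while you first isolate the $R_i$ with $\deg R_i<n_i\deg P_i$ and then expand each $P_i$-adically, but these are the same ideas in a slightly different order; you also supply the uniqueness argument that the paper leaves to the reader.
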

Since the leading coefficient of~$P_i$ is a unit, $P_i$
is not a zero divisor in~$A[T]$.
Observe that the last equality is a decomposition
into partial fractions
\[\frac{P(T)}{\prod_{i=1}^r P_i(T)^{n_i}} = Q(T)  
+ \sum_{i=1}^r \sum_{j=1}^{n_i} \frac{Q_{i,j}(T)}{  P_i(T)^{j}} \]
in the total ring of fractions of~$A[T]$.
\begin{proof}
First assume that $r=1$. In this case, the desired assertion follows
from considering the Euclidean divisions by~$P_1$
of the polynomial~$P$, of its quotient, etc. 
\begin{align*}
P(T) & = Q_1(T) P_1(T)+R_1(T) \\
& = Q_2(T)P_1(T)^2 + R_2(T) P_1(T)+R_1(T) \\
& = \dots \\
& = Q_{n_1}(T)P_1(T)^{n_1} + R_{n_1}(T)P_1(T)^{n_1-1}+\dots+R_1(T), \end{align*}
where $Q_1,\dots,Q_{n_1}\in A[T]$ and $R_1,\dots,R_{n_1}$ are polynomials
of degrees~$\leq \deg(P_1)-1$.

Now assume $r\geq 2$.
Let $i,j$ be distinct integers in~$\{1,\dots,r\}$.
By the assumption and basic properties of the resultant, 
there exist polynomials~$U,V\in A[T]$ such that $1=UP_i+VP_j$.
Consequently,
the ideals $(P_i)$ and~$(P_j)$ generate the unit ideal of~$A[T]$.
By induction on~$n_1,\dots,n_r$,
it follows that the ideals $(\prod_{k\neq i} P_k^{n_k})$, for $1\leq i\leq r$,
are comaximal in~$A[T]$.
Therefore, there exist polynomials $U_1,\dots,U_r\in A[T]$ such that
\[ 1 = \sum_{i=1}^r U_i(T) \prod_{k\neq i}P_k(T)^{n_k}. \]
By  the case $r=1$ applied to the polynomials $U_i(T)$
and $P_i(T)$, 
we obtain the desired decomposition.

Uniqueness is left to the reader.
\end{proof}

\begin{lemm}\label{lemm.resultants}
Let  \tcb{$a,a'$ be nonnegative integers and $b,b'$} be positive integers. Let $d=\gcd(b,b')$.
Then, 
\[ \Res(1-\Lef^a T^b, 1-\Lef^{a'}T^{b'})
 = (-1)^{b'} \Lef^{ab'} (1-\Lef^{(a'b-ab')/d})^d. \]
In particular, this resultant is a unit in~$\GVarM_k$ if $(a,b)$ and $(a',b')$
are not proportional.
\end{lemm}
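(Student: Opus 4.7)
The plan is to compute the resultant directly from its product formula over roots of one of the polynomials. The resultant is a universal polynomial in the coefficients of the two arguments, so any identity verified after enlarging scalars to an extension of $\GVarM_k$ automatically descends. I enlarge to a ring containing a primitive $b$-th root of unity $\zeta$ and a $b$-th root $\lambda$ of $\Lef$, so that $1 - \Lef^a T^b = -\Lef^a \prod_{i=0}^{b-1}(T - \zeta^i \lambda^{-a})$. Applying the formula $\Res(f,g) = (\text{lead}(f))^{\deg g} \prod_{\alpha : f(\alpha)=0} g(\alpha)$ gives
\[
  \Res(1 - \Lef^a T^b,\, 1 - \Lef^{a'} T^{b'}) = (-\Lef^a)^{b'} \prod_{i=0}^{b-1}\bigl(1 - \Lef^{a'} \zeta^{ib'} \lambda^{-ab'}\bigr).
\]

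Next, set $d = \gcd(b, b')$ and write $b = dm$, $b' = dm'$ with $\gcd(m, m') = 1$. Since $\zeta^{b'}$ is a primitive $m$-th root of unity, as $i$ ranges over $\{0, 1, \dots, b-1\}$ the power $\zeta^{ib'}$ hits each $m$-th root of unity exactly $d$ times. The key combinatorial identity is then
\[
  \prod_{i=0}^{b-1}(1 - x \zeta^{ib'}) = \Bigl(\prod_{\eta^m = 1}(1 - x\eta)\Bigr)^d = (1 - x^m)^d,
\]
valid for any indeterminate $x$. Substituting $x = \Lef^{a'} \lambda^{-ab'}$ and using $\lambda^b = \Lef$, one computes $x^m = \Lef^{ma' - ab'/d} = \Lef^{(a'b - ab')/d}$. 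Combining with $(-\Lef^a)^{b'} = (-1)^{b'} \Lef^{ab'}$ yields the claimed identity; the auxiliary $\zeta, \lambda$ disappear, confirming that the result indeed lies in $\GVarM_k$ (alternatively this is built into the Sylvester determinant definition).

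For the final assertion, if $(a,b)$ and $(a',b')$ are not proportional, the integer $n := (a'b - ab')/d$ is nonzero. For $n > 0$, the element $1 - \Lef^n$ is a unit in $\GVarM_k$ by the very construction of the localization; for $n < 0$, one writes $1 - \Lef^n = -\Lef^n(\Lef^{-n} - 1)$, again a product of units. Since $\Lef^{ab'}$ is invertible as well, the resultant is a unit. The only substantive step is the roots-of-unity identity; everything else is bookkeeping of exponents and signs, so I expect no real obstacle.
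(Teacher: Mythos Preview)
Your proof is correct and follows essentially the same route as the paper: both enlarge scalars so that $1-\Lef^a T^b$ splits, apply the product formula $\Res(f,g)=\mathrm{lead}(f)^{\deg g}\prod_{f(\alpha)=0}g(\alpha)$, and simplify the resulting product via the roots-of-unity identity $\prod_{\eta^m=1}(1-x\eta)=1-x^m$. The paper works in $\C[\Lef^{\pm 1/bb'}]$ while you adjoin a $b$-th root of~$\Lef$ and a primitive $b$-th root of unity, but this is the same idea, and your added explanation of why the resultant is a unit when $(a,b)$ and $(a',b')$ are not proportional is a welcome detail the paper leaves implicit.
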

\begin{proof}
It is sufficient to prove this formula when the ring~$\GVarM_k$
is replaced by the ring $A=\C[\Lef^{\pm 1/bb'}]$ of Laurent polynomials
in an indeterminate $\Lef^{1/bb'}$. Then, the polynomial
$1-\Lef^aT^b$ is split in~$A$; this leads to
the explicit elementary computation
\begin{align*}
\Res(1-\Lef^aT^b,1-\Lef^{a'}T^{b'})
& = (-\Lef^a)^{b'} \prod_{\zeta^b=1} (1-\zeta^{b'} \Lef^{a'-ab'/b}) \\
&= (-1)^{b'} \Lef^{ab'}
\prod_{\xi^{b/d}=1} (1-\xi \Lef^{a'-ab'/b})^d \\
&= (-1)^{b'} \Lef^{ab'} (1-\Lef^{(a'-ab'/b)b/d})^d \\
& = (-1)^{b'} \Lef^{ab'} (1-\Lef^{(a'b-ab')/d})^d.\qedhere
\end{align*}
\end{proof}

\begin{prop}\label{prop.tauber}
Let $Z(T)=\sum_{n\in\Z} [M_n]T^n\in\GVar^+_k\lbra T\rbra[T^{-1}]$
be a Laurent series with effective coefficients in~$\GVar_k$.

Let $a$ and~$d$ be positive integers and let 
$P(T)=(1-\Lef^a T^a)^d Z(T)$. Assume that $P(T)$
belongs to $\GVarM_k\{T\}^\dagger$
and that $P(\Lef^{-1})$ is an effective non-zero element of~$\GVarM_k$.
Then, for every $p\in\{0,\dots,a-1\}$, one of the following cases
occur when $n$ tends to infinity in the congruence class of~$p$ modulo~$a$:
\begin{enumerate}
\item  Either $\dim(M_n) = \mathrm o(n)$, 
\item Or $\dim(M_n)-n$ has a finite limit and 
$\frac{\log(\kappa(M_n))}{\log(n)}$ converges to some integer in~$\{0,\dots,d-1\}$.
\end{enumerate}
Moreover, the second case happens at least once.
\end{prop}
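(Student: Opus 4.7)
The plan is to combine a partial fraction decomposition of $Z(T)$ with the Poincaré polynomial morphism $\PC\colon\GVarM_k \to \Z\lbra t^{-1}\rbra[t]$, which converts the asymptotic statement about $\dim(M_n)$ and $\nu(M_n)$ into a statement about $t$-degrees and leading coefficients of polynomials.

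\textbf{Step 1 (partial fractions).} By Lemma~\ref{lemm.resultants}, the resultant of $1-\Lef^aT^a$ with any $1-\Lef^{a'}T^{b'}$ satisfying $b'>a'\geq 0$ is a unit in $\GVarM_k$, so Lemma~\ref{lemm.elements-simples} (applied after clearing a power of $T$) provides a decomposition
\[ Z(T) = R(T) + \sum_{j=1}^d \frac{Q_j(T)}{(1-\Lef^aT^a)^j}, \]
with $R(T)\in\GVarM_k\{T\}^\dagger$ and polynomials $Q_j(T)=\sum_{p=0}^{a-1}c_{j,p}T^p\in\GVarM_k[T]$ of degree strictly less than~$a$. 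Multiplying by $(1-\Lef^aT^a)^d$ and specialising at $T=\Lef^{-1}$ kills the contributions of $R(T)$ and of the terms with $j<d$, leaving $P(\Lef^{-1})=Q_d(\Lef^{-1})=\sum_p c_{d,p}\Lef^{-p}$; by hypothesis this is an effective non-zero element of $\GVarM_k$, so the coefficients $c_{d,p}$ cannot all vanish.

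\textbf{Step 2 (coefficient extraction and Poincaré analysis).} Using $(1-\Lef^aT^a)^{-j}=\sum_{m\geq 0}\binom{m+j-1}{j-1}\Lef^{am}T^{am}$ and writing $n=am+p$ with $0\leq p<a$, I get
\[ [M_n] = [R(T)]_n + \Lef^{am}\sum_{j=1}^d c_{j,p}\binom{m+j-1}{j-1}. \]
Applying $\PC$ coefficient-wise, a further partial fraction decomposition of $R(T)$ together with the expansion $(1-\Lef^{a'}T^{b'})^{-k}=\sum_m\binom{m+k-1}{k-1}\Lef^{a'm}T^{b'm}$ shows that $\deg_t\PC([R(T)]_n)\leq 2\alpha n+O(1)$ for a fixed constant $\alpha=\max(a'/b')<1$ depending only on the denominators appearing in $R$. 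The main term gives $t^{2am}\sum_j\binom{m+j-1}{j-1}\PC(c_{j,p})$, whose $t$-degree for large $m$ is $2am+\Delta_p$ with $\Delta_p=\max_j\deg\PC(c_{j,p})$ (set $\Delta_p=-\infty$ if all $c_{j,p}$ vanish).

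\textbf{Step 3 (case analysis).} Fix $p\in\{0,\dots,a-1\}$. If every $c_{j,p}=0$, the main term disappears and $[M_n]=[R(T)]_n$, so $\dim(M_n)\leq \alpha n+O(1)$ and $n-\dim(M_n)\to\infty$ linearly: this is case~(1). Otherwise $\Delta_p$ is a finite integer; since $(1-\alpha)n\to\infty$, the main term dominates the $R$-contribution for large $n$, and effectivity of $[M_n]$ forces $\deg\PC([M_n])=2\dim(M_n)$, giving $\dim(M_n)-n=\Delta_p/2-p$, a finite limit. The leading coefficient of $\PC([M_n])$, equal to $\nu(M_n)$, is
\[ \nu(M_n) = \sum_{j\in J_p}\lambda_{j,p}\binom{m+j-1}{j-1}, \]
where $J_p=\{j:\deg\PC(c_{j,p})=\Delta_p\}$ and $\lambda_{j,p}$ is the $t$-leading coefficient of $\PC(c_{j,p})$. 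This is a polynomial in $m$ of degree $\tilde d_p-1$ with $\tilde d_p=\max J_p\in\{1,\dots,d\}$, whence $\log\nu(M_n)/\log n\to\tilde d_p-1\in\{0,\dots,d-1\}$. This is case~(2). Finally, if case~(2) occurred for no $p$, then $c_{d,p}=0$ for every $p$, so $Q_d(\Lef^{-1})=0$, contradicting Step~1.

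\textbf{Main obstacle.} The subtle point is ruling out accidental cancellations when identifying the leading $t$-coefficient: I rely on the fact that the binomials $\binom{m+j-1}{j-1}$ are polynomials in $m$ of \emph{distinct} degrees $j-1$, so no non-trivial integer linear combination of them vanishes identically in $m$; this guarantees that $\nu(M_n)$ is asymptotic to a polynomial of degree exactly $\tilde d_p-1$, and the positivity of its leading coefficient is forced by $\nu(M_n)\in\N$. Separating cleanly the contribution of $R(T)$ (whose $t$-degree is bounded by $2\alpha n$ with a fixed $\alpha<1$) from the main term (whose $t$-degree is $2n+O(1)$) is what makes the dichotomy rigid as $n\to\infty$ within each residue class modulo~$a$.
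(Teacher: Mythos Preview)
Your argument is correct and follows essentially the same route as the paper's own proof: partial fractions over $\GVarM_k[T]$ (via Lemmas~\ref{lemm.elements-simples} and~\ref{lemm.resultants}), coefficient extraction using the binomial expansion of $(1-\Lef^aT^a)^{-j}$, and passage to the Poincar\'e polynomial to read off $\dim(M_n)$ and $\nu(M_n)$. The only organisational difference is that you lump all the subdominant denominators into a single remainder $R(T)\in\GVarM_k\{T\}^\dagger$, whereas the paper keeps them indexed as $(a_i,b_i)_{i\geq 2}$ and bounds each $[M_n]^i$ separately via the dimension filtration before invoking $\PC$; both give the same bound $\dim([R(T)]_n)\leq\alpha n+O(1)$ with $\alpha<1$.
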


\begin{proof}
Without lack of generality, we assume that $Z(T)$ is a power series.
Set $a_1=b_1=a$ and $d_1=d$.
By assumption, there exist a finite family $(a_i,b_i)_{2\leq i\leq r}$ 
of integers such that $\tcb{0\leq a_i<b_i}$ for all $i\geq 2$, 
and integers $d_i$ such that
\[Q(T) = Z(T) \prod_{i=1}^r (1-\Lef^{a_i}T^{b_i})^{d_i}  \]
is a polynomial in~$\GVarM_k[T]$.
Using the fact that $1-\Lef^mT^n$ divides $1-\Lef^{mp}T^{np}$ for every positive
integer~$p$, we may assume that no two pairs
$(a_i,b_i)$ and $(a_j,b_j)$ are proportional.

For every~$i\in\{1,\dots,r\}$, 
set $P_i(T)=1-\Lef^{a_i}T^{b_i}$; its leading coefficient
is invertible in~$\GVarM_k$. 
Moreover, for $i$ and $j$ such that $1\leq i<j\leq r$, it follows
from Lemma~\ref{lemm.resultants} that the resultant of~$P_i$
and~$P_j$ is a unit in~$\GVarM_k$.
Thus, by decomposition in partial fractions (Lemma~\ref{lemm.elements-simples}),
there exist polynomials $Q_0$ and $Q_{i,j}$ in $\GVarM_k[T]$ such that
\begin{equation}\label{eq.partialfractions}
Z(T) = Q_0(T) + \sum_{i=1}^r \sum_{j=1}^{d_i}\frac{Q_{i,j}(T)}{(1-\Lef^{a_i}T^{b_i})^{j}} \end{equation}
and $\deg(Q_{i,j})\leq b_i-1$ for every~$i\in\{1,\dots,r\}$.

For $i\in\{1,\dots,r\}$ and $j\in\{1,\dots,d_i\}$, write $Q_{i,j}=\sum_{n=0}^{b_i-1} q_{i,j,n} T^n$, for some elements $q_{i,j,n}\in\GVarM_k$.
This leads to the
following power expansion in~$\GVarM_k\lbra T\rbra $:
\begin{align*}
Z(T) & =Q_0(T)  + \sum_{i=1}^{r} \sum_{j=1}^{d_i}
 \sum_{n=0}^{b_i-1}q_{i,j,n}  T^n \sum_{m=0}^\infty \binom{j+m-1}{j-1} \Lef^{a_im}T^{b_im}  \\
&= Q_0(T)+ \sum_{n=0}^{\infty} \left(\sum_{i=1}^r \sum_{j=1}^{d_i} 
   \binom{j+\lfloor n/b_i\rfloor -1}{j-1}q_{i,j,n\bmod b_i} \Lef^{a_i\lfloor n/b_i\rfloor} \right) T^n,
\end{align*}
so that for every $n> \deg(Q_0)$, one has
\begin{equation} [M_n] = 
\sum_{i=1}^r \sum_{j=1}^{d_i} 
   \binom{j+\lfloor n/b_i\rfloor -1}{j-1}q_{i,j,n\bmod b_i} \Lef^{a_i\lfloor n/b_i\rfloor} . \end{equation}
For every~$i,j$, define
\begin{equation}
[M_n]^{i,j} =  \binom{j+\lfloor n/b_i\rfloor -1}{j-1}q_{i,j,n\bmod b_i} \Lef^{a_i\lfloor n/b
_i\rfloor}  \end{equation} 
and
\begin{equation}
[M_n]^i = \sum_{j=1}^{d_i} [M_n]^{i,j}, 
\end{equation}
so that
\begin{equation}
[M_n] = \sum_{i=1}^r [M_n]^i.
\end{equation}
It follows directly from the definitions 
that for every~$i\geq 1$ and every~$n\geq 1$,
$\dim([M_n]^i)\leq (a_i/b_i) n+\mathrm O(1)$. 
Since $a_i\leq b_i$ for all~$i$, this implies $\dim([M_n])\leq n+\mathrm O(1)$.
We will now show that when $n$
belongs to appropriate congruence classes modulo~$a$,
one has the equality $\dim([M_n]^1)=n+\mathrm O(1)$.
Since $a_i<b_i$ for $i\geq 2$ and $a_1=b_1=a$,
this will imply the relations
$\dim([M_n])=n+\mathrm O(1)$ and $\kappa([M_n])=\kappa([M_n]^1)$
(for $n$ large enough in this congruence class).

Let $n$ be any integer~$>\deg(Q_0)$,
let $n=am+\bar n$ be the Euclidean division of~$n$ by~$a$.
It follows from the definition of~$[M_n]^{1}$ that
\[ [M_n]^{1} \Lef^{-n} 
=  \sum_{j=1}^d \binom{j+m-1}{j-1} q_{1,j,\bar n}\Lef^{am-n}
= \sum_{j=1}^d \binom{j+m-1}{j-1} q_{1,j,\bar n}\Lef^{-\bar n}.
\]

It follows from Equation~\eqref{eq.partialfractions} 
that
\[ P(\Lef^{-1}) = \left[Z(T) (1-\Lef^aT^a)^d\right]_{T=\Lef^{-1}}
= Q_{1,d}(\Lef^{-1})
= \sum_{p=0}^{a-1} q_{1,d,p}\Lef^{-p}.\]
Since $P(\Lef^{-1})$ is effective and non-zero, 
its Poincaré polynomial $\PC(P(\Lef^{-1}))$ is non-zero.
Consequently, there must exist an integer $p\in\{0,\dots,a-1\}$
such that $\dim(q_{1,d,p})\neq -\infty $.
We now restrict the analysis to integers~$n$
congruent to~$p$ modulo~$a$.
%
Set
\begin{equation*}
d_p = \max_{1\leq j\leq d}  \dim(q_{1,j,p})-p 
\end{equation*}
and let $j_p$ be the largest integer~$j$ such that $d_p=\dim(q_{1,j,p})-p$.
Looking at Poincaré polynomials and using that 
for $j\neq j_p$, either $\dim(q_{1,j,p})<\dim(q_{1,j_p,p})$, or the
binomial coefficient $\binom{j_p+m-1}{j_p-1}$ goes to infinity faster 
than $\binom{j+m-1}{j-1}$ when $m\to\infty$, we get
the following asymptotic expansions
\[\dim([M_n]^1\Lef^{-n}) =  d_p \quad\text{and}\quad \kappa([M_n]) \sim \binom{j_p+m-1}{j_p-1} \kappa(q_{1,j_p,p}) \]
for $n$ large enough and congruent to~$p$ modulo~$a$.
In particular,
\[\dim([M_n]) = n + \mathrm O(1) 
\quad\text{and}\quad
  \frac{\log(\kappa([M_n]))}{\log(n)} \ra j_p-1.
\]
This concludes the proof of the proposition.
\end{proof}

\Subsection{Existence of the moduli spaces}

In this Section, we prove a general proposition that asserts
existence of moduli schemes of sections of bounded height 
in a general context.

Let $k$ be a field, let $C $ be \tcb{an irreducible} projective smooth~$k$-curve;
let $\eta$ be its generic point and let $F=k(C)$ be the function field of~$C$.
Let $C_0$ be a non-empty Zariski open subset of~$C $.

Let $X$ be \tcb{an irreducible} projective $k$-variety together with a surjective flat
morphism $\pi\colon X\ra C $.
Let~$G$ be a Zariski open subset of~$X_F$, assumed to be affine.
Let~$U$ be a Zariski open subset of~$X$ such that $G\subset U_F$
and $\pi(U)\supset C_0$.

Let $(D_\alpha)_{\alpha\in\mathscr A}$ be a finite family
of Cartier divisors on~$X$ such that, for each~$\alpha$,
the restriction of~$D_\alpha$  to~$X_F$ is effective 
and
$X_F\setminus G=\bigcup \abs{D_\alpha}_F$.
For each~$\alpha$, we also let $\mathscr L_\alpha$
be the line bundle~$\mathscr O_X(D_\alpha)$.
Finally, we assume that there exists a linear
combination with positive coefficients
$\mathscr L=\sum\lambda_\alpha\mathscr L_\alpha$,
as well as a \tcr{Cartier $\Q$-divisor~$D$} on~$X$ such that $D_F$ is effective,
supported by~$(X\setminus G)_F$ 
and such that $\mathscr L(-D)$ is ample.

\begin{rema}\label{rema.lower-bound}
Let $\mathscr L$ be a line bundle on~$X$ and
let $f\in\Gamma(X_F,\mathscr L)$ be a non-zero global section.
Let us show that there exists an integer~$m$ such that 
for every section $\sigma\colon C\to X$ of~$\pi$
satisfying $\sigma(\eta) \notin \div(f)$, one has
$\deg(\sigma^*\mathscr L)\geq -m$.

There exists an effective Cartier divisor~$E$ on~$C $
such that $f$ extends to a global section of $\mathscr L \otimes\pi^*(E)$.
Indeed, viewing~$f$ as a meromorphic section of~$\mathscr L$
on~$X$, let us decompose its  divisor 
as the sum $H+V$ of
its horizontal (\emph{i.e.}, faithfully flat over~$C $) 
and vertical (mapping to a point) irreducible components.
By construction, the components of~$H$ are the Zariski closures
of the components of the divisor of~$f$, viewed as a section
of~$\mathscr L$ on~$X_F$; consequently, $H$ is effective
by hypothesis.
Still by definition, $V$ is a linear combination
of irreducible components of closed fibers.
Consequently, there exists an effective divisor~$E$ on~$C $
such that $V\geq -\pi^*E$; then $f$ extends to a global section
of $\mathscr L\otimes\pi^*(E)$.

In particular, 
for every section $\sigma\colon C\to X$ of~$\pi$
satisfying $\sigma(\eta) \notin \div(f)$, one has
$\deg(\sigma^*\mathscr L)\geq -\deg(\sigma^*\pi^*E)=-\deg(E)$.
\end{rema}


\begin{prop}\label{lemm.hilbert}
For every $\mathbf n$ in~$\Z^{\mathscr A}$, 
there is a quasi-projective $k$-scheme $M_{G,\mathbf n}$ parameterizing
sections $\sigma\colon C \ra X$ of~$\pi$
satisfying the following properties:
\begin{itemize}
\item under~$\sigma$, the generic point~$\eta$ of~$C $ 
is mapped to a point of~$G$;
\item for each $\alpha\in\mathscr A$, 
$\deg_{C }\sigma^*\mathscr L_\alpha=n_\alpha$.
\end{itemize}

In that scheme, the sections~$\sigma$ such that $\sigma(C_0)\subset U$
constitute a constructible set~$M_{U,\mathbf n}$.
Moreover, there exists~$n_0\in\Z$ such that $M_{U,\mathbf n}$ is empty  
if $n_\alpha<n_0$ for some~$\alpha\in\mathscr A$.
\end{prop}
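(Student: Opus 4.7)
My plan is to realize $M_{G,\mathbf n}$ as an open subscheme of a relative Hom scheme, then extract $M_{U,\mathbf n}$ from it as a constructible subset via Chevalley's theorem, and finally deduce the lower bound on the $n_\alpha$ from the remark preceding the statement.

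First, since $\pi\colon X\to C$ is projective, Grothendieck's Hom-scheme construction provides a $k$-scheme $\mathscr H=\Hom_C(C,X)$, locally of finite type over~$k$ and with quasi-projective connected components, representing the functor of $C$-morphisms which are sections of~$\pi$. Each map $\sigma\mapsto\deg\sigma^*\mathscr L_\alpha$ is locally constant on~$\mathscr H$, so one obtains a decomposition $\mathscr H=\bigsqcup_{\mathbf n\in\Z^{\mathscr A}}\mathscr H_{\mathbf n}$ into open and closed quasi-projective subschemes.

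Next I would cut out $M_{G,\mathbf n}$ inside $\mathscr H_{\mathbf n}$. Let $\bar Z$ be the scheme-theoretic closure in~$X$ of $X_F\setminus G$. Since $\bar Z\hookrightarrow X$ is a closed immersion, $\Hom_C(C,\bar Z)$ is a closed subscheme of~$\mathscr H$. Moreover, $\sigma(\eta)\in G$ if and only if $\sigma$ does not factor through~$\bar Z$, which defines the complementary open subscheme; taking its intersection with $\mathscr H_{\mathbf n}$ yields the quasi-projective scheme $M_{G,\mathbf n}$. For $M_{U,\mathbf n}$, I would apply Chevalley's theorem to the universal section $\sigma_{\mathrm u}\colon M_{G,\mathbf n}\times C\to X$: the image in~$M_{G,\mathbf n}$ of the locally closed subset
\[
 \sigma_{\mathrm u}^{-1}(X\setminus U)\cap (M_{G,\mathbf n}\times C_0)
\]
is constructible, and $M_{U,\mathbf n}$ is its complement.

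For the emptiness statement, fix $\alpha\in\mathscr A$ and let $s_\alpha$ be the canonical rational section of $\mathscr L_\alpha=\mathscr O(D_\alpha)$. Since $D_\alpha|_{X_F}$ is effective and supported in $X_F\setminus G$, the remark preceding the proposition provides an effective divisor $E_\alpha$ on $C$ such that $s_\alpha$ extends to a regular global section of $\mathscr L_\alpha\otimes\pi^*\mathscr O_C(E_\alpha)$ on~$X$. For any section~$\sigma$ with $\sigma(\eta)\in G$, the pullback $\sigma^*s_\alpha$ is non-zero, so the line bundle $\sigma^*\mathscr L_\alpha\otimes\mathscr O_C(E_\alpha)$ has non-negative degree, whence $n_\alpha\geq -\deg E_\alpha$. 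Taking $n_0=-\max_\alpha\deg E_\alpha$ then gives the required bound. I expect the main obstacle to be gathering and citing the classical Hom-scheme results correctly (representability, quasi-projectivity of components, and the decomposition by degrees); once these are in hand, Chevalley's theorem and the stated remark do the rest.
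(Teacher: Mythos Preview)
Your overall strategy matches the paper's: realize the moduli space inside the Hom/Hilbert scheme, cut out the open locus where $\sigma(\eta)\in G$, get constructibility of $M_{U,\mathbf n}$ via Chevalley, and deduce the lower bound on the $n_\alpha$ from the remark. The constructibility and lower-bound steps are fine as written.

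There is, however, a genuine gap in your quasi-projectivity argument. You assert that the decomposition of $\mathscr H$ by the degrees $\deg\sigma^*\mathscr L_\alpha$ yields ``open and closed quasi-projective subschemes'' $\mathscr H_{\mathbf n}$. Local constancy of the degree maps certainly makes each $\mathscr H_{\mathbf n}$ open and closed, but $\mathscr H$ is only \emph{locally} of finite type, and the $\mathscr L_\alpha$ are not assumed to be ample (nor to generate $\Pic(X)_\Q$ in this general setup). So a priori $\mathscr H_{\mathbf n}$ could be a disjoint union of infinitely many connected components and fail to be of finite type. The paper handles this by using the ampleness hypothesis: setting $\mathscr M=\mathscr L(-D)$ (ample on~$X$), one writes $\deg\sigma^*\mathscr M=\sum\lambda_\alpha n_\alpha-\deg\sigma^*\mathscr O(D)$, and since $D_F$ is effective and supported away from~$G$, the remark bounds $\deg\sigma^*\mathscr O(D)$ from below \emph{for sections with $\sigma(\eta)\in G$}. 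This gives an upper bound for $\deg\sigma^*\mathscr M$, hence (by Chow's theorem, SGA~6, XIII, Cor.~6.11) only finitely many possible Hilbert polynomials, and therefore quasi-projectivity of $M_{G,\mathbf n}$. Note that this argument uses both the ampleness assumption and the restriction to the $G$-locus---neither appears in your quasi-projectivity step. You should insert this boundedness argument (or something equivalent) between your construction of $\mathscr H_{\mathbf n}$ and your conclusion that $M_{G,\mathbf n}$ is quasi-projective.
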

\begin{proof}
As a standard
consequence of the existence of Hilbert schemes, there
exists a $k$-scheme $M_{X,\mathbf n}$ which parameterizes
sections $\sigma\colon C \ra X$ such that
$\deg_{C }\sigma^*\mathscr L_\alpha=n_\alpha$ for each~$\alpha$.
Indeed, the functor of \tcb{sections}~$\sigma\colon C \ra X$ is represented
by the open subscheme of the Hilbert scheme $\Hilb_{X}$ which parameterizes
the closed subschemes of~$X$ which are mapped isomorphically by~$\pi$.
By flatness, 
each of the condition~$\deg_{C }\sigma^* \mathscr L_\alpha=n_\alpha$ 
is open and closed in the Hilbert scheme. 

The condition that the
generic point of~$C $ is mapped to a point of~$G$
means that $\sigma(C )\not\subset \abs{X\setminus G}$,
while the condition $\sigma(C )\subset\abs{X\setminus G}$
defines an closed subscheme of $M_{X,\mathbf n}$. 
Let $M_{G,\mathbf n}$ be its complement.
By construction, this scheme represents the given functor,
and we have to prove that it is quasi-projective.

First of all, since the restriction to~$X_F$ of the
divisor~$D_\alpha$ is effective and disjoint from~$G$,
\tcr{Remark~\ref{rema.lower-bound} asserts that} there exists an integer~$m$
such that
$\deg(\sigma^*\mathscr L_\alpha)\geq -m$
for every~$\mathbf n$ and every section~$\sigma$ in $M_{G,\mathbf n}$.

Let $\mathscr M$ be an ample $\Q$-line bundle on~$X$
of the form~$\mathscr L(-D)$,
where \tcr{$D$ is a Cartier $\Q$-divisor such that
$D_F$ is effective and disjoint from~$G$.}
For every $\sigma\in M_{G,\mathbf n}$, one has
\[ \deg \sigma^*(\mathscr M+\mathscr O(D))=\deg\sigma^*\mathscr M+ \deg\sigma^*\mathscr O(D). \]
\tcr{By Remark~\ref{rema.lower-bound},} there exists an integer~$m'$ such that
\[ \deg\sigma^*\mathscr O(D) \geq -m' \]
for all sections~$\sigma\in M_{G,\mathbf n}$. 
Therefore, $\deg\sigma^*\mathscr M \leq m+\sum\lambda_\alpha n_\alpha$ 
for all~$\sigma\in M_{G,\mathbf n}$.
By a theorem of Chow (\cite{sga6}, XIII, Cor.~6.11),
this gives only finitely many
possibilities for the Hilbert polynomial (relative to~$\mathscr M$)
of the image of a section~$\sigma$ which belongs to $M_{G,\mathbf n}$.
It is well known that the subschemes of~$X$ with given Hilbert polynomial 
with respect to the ample line bundle~$\mathscr M$ form
a closed and open subscheme of~$\Hilb_X$, which is projective
as a scheme. Consequently,  $M_ {G,\mathbf n}$ is quasi-projective.

It remains to prove that the condition ``$\sigma(C_0)\subset U$''
defines  a constructible subset of~$M_{G,\mathbf n}$.
Indeed, let $T$ be a scheme and let $\sigma\colon C \times T\ra X$
be a morphism. Let $V=\sigma^{-1}(U)$
and let $Z$ be the complement of~$V\cap(C_0\times T)$ in~$C_0\times T$; this
is  a closed subset of $C_0\times T$.
The set of points~$t\in T$ such that $\sigma(C_0\times\{t\})\not\subset U$
is equal to the projection in~$T$ of $Z$, so is constructible,
as claimed.
\end{proof}

\begin{rema}
Assume that for each $\alpha\in\mathscr A$
the divisor~$D_\alpha$ is effective; 
then $M_{U,\mathbf n}$ is actually an  \emph{open and closed subscheme}
of $M_{G,\mathbf n}$. Indeed, let us write $\deg\sigma^*\mathscr L_\alpha$
as the intersection number of~$\sigma_*C $ with $D_\alpha$.
By definition of $M_{U,\mathbf n}$, this is a sum 
of local contributions $(\sigma_*C , D_\alpha)_v$
at all points of $C \setminus C_0$.
Since $D_\alpha$ is effective,
each of these contributions is lower semi-continuous as a function
of~$\sigma$ (it may increase on closed subsets), while their sum
is the constant $n_\alpha$ on $M_{G,\mathbf n}$. This decomposes
$M_{U,\mathbf n}$ as a disjoint union of open and closed 
subschemes defined by prescribing the possible values
for $(\sigma_*C , D_\alpha)_v$.
\end{rema}

\subsection{Clemens complexes}

Let $X$ be a smooth algebraic variety over a field~$K$
and let $D$ be an effective divisor with strict normal crossings on~$X$;
in other words, \tcb{the support of} $D$ is the union of its
irreducible components which are themselves smooth and meet transversally.

The Clemens complex $\Cl(X,D)$ of $(X,D)$ is the simplicial complex
whose points are irreducible components of~$D$,
edges are irreducible components
of intersections of two  distinct irreducible components,
etc. By the normal crossing assumption, all of these schemes are smooth.
Thus, the dimension of the Clemens complex  is the maximal number
of irreducible components of~$D$ whose intersection is non-empty,
minus~$1$.
For every integer~$d$, we also write $\Cl^d(X,D)$ for
the set of simplices (also called faces)
of dimension~$d$ of $\Cl(X,D)$.

The analytic Clemens complex $\Clan(X,D)$ is the subcomplex
of~$\Cl(X,D)$ consisting of those simplices~$Z\in\Cl(X,D)$
such that $Z(K)\neq\emptyset$.
One writes $\Clanmax(X,D)$ for the set of maximal faces of~$\Clan(X,D)$
and $\Clan[d](X,D)$ for the set of faces of dimension~$d$
of $\Clan(X,D)$.

If $L$ is an extension of~$K$, the divisor~$D_L$
on~$X_L$ still has strict normal crossings
and one writes $\Cl_L(X,D)=\Cl(X_L,D_L)$
and $\Clan_L(X,D)=\Clan(X_L,D_L)$.

\Subsection{Motivic residual functions on arc spaces}

Let $k$ be an algebraically closed field of characteristic
zero, let $R$ be the complete discrete valuation ring $k\lbra t\rbra$,
and let $K=k\lpar t\rpar$ be its field of fractions.

Let $\mathscr X$ be a flat $R$-scheme of finite type,
equidimensional of relative dimension~$n$.

For every integer~$m\geq 0$, we write $\mathscr L_m(\mathscr X)$
or~$\mathscr X(m)$ for the $m$th Greenberg space of~$\mathscr X$,
see~\S2.3 of~\cite{loeser/sebag2003}  for the precise general definition.
Let us simply recall that $\mathscr X(m)$
is the algebraic variety over~$k$ 
which represents
the functor $\ell\mapsto \mathscr X(\ell\lbra t\rbra /(t^{m+1}))$
on the category of $k$-algebras. 
There are natural affine morphisms 
$p_m^{m+1}\colon \mathscr X(m+1)\ra\mathscr X(m)$;
consequently, the projective limit 
$\mathscr L(\mathscr X)=\varprojlim_m \mathscr L_m(\mathscr X)$ 
exists as a $k$-scheme.
Let $p_{\tcb{m}} \colon \mathscr L(\mathscr X)\ra \mathscr L_m(\mathscr X)$
be the canonical projection.
When $\mathscr X=X\otimes_k R$, for some $k$-variety~$X$,
then $\mathscr L_m(\mathscr X)$
is the space of $m$-jets of~$X$,
and $\mathscr L(\mathscr X)$ is the arc space of~$X$.

The paper~\cite{cluckers-loeser:2008} introduces a general definition
of constructible motivic functions on arc spaces. 
In this paper, we shall \tcb{mostly} consider  the following 
more restrictive class:
We define
a motivic residual function~$h$ on~$\mathscr L(\mathscr X)$
to be an element of the inductive limit
of all relative Grothendieck groups $\Mot_{\mathscr X(m)}$.
Recall that $\Mot_{\mathscr X(m)}$ is a localization of
the Grothendieck ring $\GVar_{\mathscr X(m)}$;
in particular, a motivic residual function comes from the latter ring
if it is given by a formal linear combination of varieties 
$H\ra \mathscr X(m)$; 
in addition to the cut-and-paste relations at the heart of 
the definition of the Grothendieck groups,
we identify the diagrams
$H\ra \mathscr X(m)$  
and $H\times_{\mathscr X(m)}\mathscr X(m+1)\ra\mathscr X(m+1)$.
The fiber product structure of varieties gives rise to a ring
structure on the set of motivic residual functions 
on~$\mathscr L(\mathscr X)$.

An example of such a motivic residual function
is the characteristic function of 
a constructible subset~$W$ of~$\mathscr L(\mathscr X)$: such a $W$
is of the form $p_m^{-1}(W_m)$ for a constructible subset~$W_m$
of~$\mathscr L_m(\mathscr X)$ and $\mathbf 1_W$ is
given by the obvious diagram $W_m\ra \mathscr X(m)$.
Let $A$ be an algebraic variety over~$k$, 
and let $a$ be its class in~$\Mot_k$; then 
the motivic residual function~$a\mathbf 1_W$
is the diagram $A\times W_m\ra \mathscr X(m)$, the map being
the second projection composed by the inclusion of~$W_m$ into~$\mathscr X(m)$.
Motivic residual functions on~$\mathscr L(\Aff^n)$
are examples of Schwartz-Bruhat motivic functions
in~$K^n$ with support in~$R^n$
(see \S\ref{ss-sbfunctions-general}).

\section{Setup and notation}
\label{s.setup}

\tcr{In this Section, we fix the notation that will be
used for the rest of the paper.  Compared with the introduction,
we denote varieties fibered over the base curve by script letters,
and use capital letters for their generic fiber.
This reflects the fact that, even if models are given in
the statement of Theorem~\ref{theo.main},
its proof requires us to adjust them somewhat.}

\subsection{Algebraic geometry}\label{s.setup-ag}
Let $k$ be an algebraically closed field of characteristic zero,
let $C_0$ be a smooth quasi-projective connected curve over~$k$,
let $C $ be its smooth projective compactification
and let $S=C \setminus C_0$.
Let $F=k(C)=k(C_0)$ be the function field of~$C $; let $\eta_C$
be its generic point.

Let $G$ be the group scheme $\ga^n$ and let $X$ be a smooth
projective equivariant compactification of $G_F$.
In other words, $X$ is a smooth projective $F$-scheme containing
$G_F$ as a dense open subset, and the group law $G_F\times G_F\ra G_F$
extends as a group action $G_F\times X\ra X$.
The boundary $X\setminus G_F$ of~$G_F$ in~$X$
is a divisor. 
In this paper, we make the hypothesis that this divisor has strict
normal crossings. More precisely, we assume
that its irreducible components 
are geometrically irreducible, smooth and meet transversally, 
so that for every~$p$,
the intersection of any~$p$ of those components is either empty
or smooth of dimension~$n-p$. 
This is a slightly stronger assumption that the one done in the
arithmetic case~\cite{chambert-loir-tschinkel:2012},
where we only made this hypothesis after base change to~$\bar F$.
The general case can be treated in a similar way,
by constructing appropriate weak Néron models; we leave
it to the interested reader.

We write $D=X\setminus G_F$ and $(D_\alpha)_{\alpha\in\mathscr A}$ 
for the family of its irreducible components.
The divisors~$D_\alpha$  form a basis of the group~$\Pic(X)$, and a basis
of the monoid $\Lambda_\eff(X)$ of effective divisors in~$\Pic(X)$.
We will freely identify line bundles on~$X$ with divisors whose
support is contained in the boundary, and with their
classes in the Picard group.

Up to multiplication by a scalar, there is a unique $G_F$-invariant
meromorphic differential form~$\omega_X$ on~$X$; its restriction
to~$G_F$ is proportional to the form 
$\mathrm dx_1\wedge\cdots\wedge\mathrm dx_n$.
Its divisor, or its class, is the canonical class~$K_X$ of~$X$.
The divisor $-\div(\omega_X)$ can be written as $\sum\rho_\alpha D_\alpha$
for some integers~$\rho_\alpha\geq 2$ (see~\cite{hassett-t99},
Theorem~2.7).
In particular, the anticanonical class~$K_X^{-1}$ is effective.

The log-canonical class of the pair $(X,D)$ 
in~$\Pic(X)$ is the class of $K_X'=K_X+D$. 
Its opposite, the log-anticanonical class,
is given by $\sum \rho'_\alpha D_\alpha$ with
$\rho'_\alpha=\rho_\alpha-1$ for all~$\alpha$.
Since $\rho_\alpha\geq 2$ for all~$\alpha\in\mathscr A$, 
\tcr{the divisor $-K'_X$ 
can be written as the sum of an ample line divisor
and of an effective divisor (in other words, it is big),
as claimed in the introduction.}

We also recall that $\mathrm H^i(X,\mathscr O_X)=0$ for every integer~$i>0$.

\subsection{Models and heights}
A \emph{model}  of~$X$ over~$C$ is a projective
flat scheme $\pi\colon\mathscr X\ra C$ whose generic fiber
is equal to~$X$.
If, moreover, $\mathscr X$ is regular and if
the sum of the non-smooth fibers of~$\mathscr X$ 
and the 
closures $\mathscr D_\alpha$ of the divisors~$D_\alpha$ 
is a divisor with strict normal crossings on~$\mathscr X$,
then we will say that $\mathscr X$ is a \emph{good model}.
One defines analogously good models of~$X$ over~$C_0$, 
or even over local rings whose field of fractions contains~$k(C)$.

Embedded resolution of singularities
in characteristic zero implies that good models exist.

We choose a good model~$\pi\colon\mathscr X\to C$ of~$X$ over~$C$.

For every point $v\in C (k)$, we write
$\mathscr B_v$ for the set of irreducible components of~$\pi^{-1}(v)$;
for $\beta\in\mathscr B_v$, let $E_\beta$ be the corresponding component
and $\mu_\beta$ be its multiplicty in the special fibre of~$\mathscr X$
at~$v$.
 Let $\mathscr B$ be the disjoint union of all~$\mathscr B_v$,
for $v\in C (k)$.
Let $\mathscr B_1$ be the subset of~$\mathscr B$
consisting of those $\beta$ for which the multiplicity $\mu_\beta$
equals~$1$; let $\mathscr B_{1,v}=\mathscr B_1\cap\mathscr B_v$.

The complement~$\mathscr X_1$ in~$\mathscr X$ 
of the union of the components~$E_\beta$,
for $\beta\in \mathscr B\setminus\mathscr B_1$ and of 
the intersections of distinct vertical components,
is a smooth scheme over~$C $. 
\begin{lemm}\label{lemm.weakNeron}
The $C $-scheme~$\mathscr X_1$ is 
a weak N\'eron model of~$X$:
for every smooth $C$-scheme~$\mathscr Z$,
the canonical map from $\Hom_C(\mathscr Z,\mathscr X_1)$
to $\Hom_{F}(\mathscr Z_F,X)$ is a bijection.
\end{lemm}
\begin{proof}
This follows from the
fact that the $C $-scheme~$\mathscr X_1$ 
is the smooth locus of the proper map $\pi\colon \mathscr X\ra C $,
and that $\mathscr X$ is regular.
See \cite{bosch-l-r90} for details, especially p.~61.
\end{proof}

For every $\alpha\in\mathscr A$, we assume given
a divisor~$\mathscr L_\alpha$ 
on~$\mathscr X$ which extends $D_\alpha$.
There exists a family of integers $(e_{\alpha,\beta})$,
all but finitely many of them being equal to~$0$,
indexed by $\alpha\in\mathscr A$ and $\beta\in\mathscr B$ 
such that
\begin{equation}\label{eq.e-alpha-beta}
 \mathscr L_\alpha =
     \mathscr D_\alpha + \sum_{\beta\in\mathscr B} e_{\alpha,\beta}E_\beta . \end{equation}
We also define integers~$\rho_\beta$, for $\beta\in\mathscr B$, by the formula
\begin{equation}\label{eq.rho-beta}
 -\div(\omega_X)=\sum_{\alpha\in\mathscr A} \rho_\alpha \mathscr D_\alpha
+ \sum_{\beta\in\mathscr B} \rho_\beta E_\beta, \end{equation}
where $\omega_X$ is viewed as a meromorphic section of
the line bundle~$K_{\mathscr X/C}$.

Since $\pi$ is proper and $C $ is a smooth curve,
the map $\sigma\mapsto \sigma(\eta_C)$ is a bijection between 
the set of sections $\sigma \colon C\ra\mathscr X$ of~$\pi$
and 
the set of rational points $X(F)$ of~$X$.
For every line bundle~$\mathscr L$ on~$\mathscr X$ and every
section $\sigma\colon C \ra\mathscr X$,
the degree $\deg_{C }\sigma^*\mathscr L$ 
is the geometric analogue of the \emph{height} 
of the corresponding rational point.

\subsection{Local descriptions}
Let $v\in C (k)$. We write $F_v$ for the completion of~$F=k(C)$ at~$v$.
If $t$ is a local parameter of~$C $ at~$v$, then $F_v\simeq k\lpar t\rpar$
and $\widehat{\mathscr O}_{C,v}\simeq k\lbra t\rbra$.
\tcr{Writing an element~$x$ of $\widehat{\mathscr O}_{C,v}$
as a power series $x_0+x_1t+\dots$,
we consider $k\lbra t\rbra$ as the set of $k$-points
of the scheme $\Spec(k[x_0,x_1,\dots])$; writing
an element of~$F_v$ as a Laurent series $x_{-m}t^{-m}+\dots+x_0+x_1t+\dots$,
we view $k\lpar t\rpar$ as the set
of $k$-points of the ind-scheme whose $m$th term is
$\Spec(k[x_{-m},\dots,x_0,x_1,\dots])$.
Fixing an isomorphism $G\simeq \ga^n$,
we have an identification $G(F_v)\simeq k\lpar t\rpar^n$
of~$G(F_v)$ with the $k$-points of an ind-$k$-scheme. 
We will say that a subset of $G(F_v)$ is definable if it
can be defined in the language~$\mathscr L_{\mathrm{DP},\mathrm P}$ 
of Denef-Pas (see~\cite{cluckers-loeser:2008}, \S2.1, for more details).
In particular, the set of $k$-points of a constructible subset of 
a finite level of this ind-scheme is definable.}

For every point $g\in G(F_v)$, 
one can attach local intersection
degrees $(g,\mathscr D_\alpha)_v$, for $\alpha\in\mathscr A$,
defined as follows. 
By the valuative criterion of properness,
the map~$g\colon \Spec (F_v)\ra G_{F}$
extends to a morphism 
$\tilde g\colon \Spec (\widehat{\mathscr O}_{C ,v})\ra \mathscr X$
and we can consider the pull-back
$\tilde g^*\mathscr D_\alpha$ of~$\mathscr D_\alpha$ 
as an effective Cartier divisor on $\Spec (\widehat{\mathscr O}_{C ,v})$.
We define $(g,\mathscr D_\alpha)_v\in\N$ 
by the formula $\tilde g^*\mathscr D_\alpha=(g,\mathscr D_\alpha)_v [v]$.
For $\beta\in\mathscr B_v$, we define an integer $(g,E_\beta)_v\in\{0,1\}$
similarly,
considering the pull-back of~$E_\beta$.

Observe also that $\sum_{\beta\in\mathscr B_v}\mu_\beta (g,E_\beta)_v=1$.
In particular, for every $g\in G(F_v)$,
there is exactly one index~$\beta\in\mathscr B_v$
for which $(g,E_\beta)_v=1$ and one has $\mu_\beta=1$.

By the valuative criterion properness, every point $g\in G(F)$
extends canonically to a section $\sigma_g\colon C\to\mathscr X$.

\begin{lemm}
For every $g\in G(F)$ and every 
every $\alpha\in\mathscr A$, one has 
\[ \deg_{C }(\sigma_g^*(\mathscr D_\alpha))
 = \sum_{v\in C (k)}  (g,\mathscr D_\alpha)_v. \]
\end{lemm}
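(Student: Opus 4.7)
The plan is to reduce the claim to the standard fact that the degree of a Cartier divisor on the smooth projective curve $C$ equals the sum of its local orders, and to identify these local orders with the local intersection numbers $(g,\mathscr D_\alpha)_v$ by unwinding the definitions.

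First I would recall that because $\pi\colon\mathscr X\to C$ is proper and $C$ is a smooth projective curve, the valuative criterion of properness shows that a morphism $g\colon \Spec F\to X$ extends uniquely to a section $\sigma\colon C\to\mathscr X$ of~$\pi$; this is precisely the bijection $\sigma\mapsto \sigma(\eta_C)$ recalled above Section~3.3. In particular, for every $v\in C(k)$, the composition of~$\sigma$ with $\Spec\widehat{\mathscr O}_{C,v}\to C$ coincides with the extension $\tilde g\colon \Spec\widehat{\mathscr O}_{C,v}\to\mathscr X$ of~$g$ used in the definition of $(g,\mathscr D_\alpha)_v$.

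Next, since $\mathscr D_\alpha$ is an effective Cartier divisor on~$\mathscr X$ whose support does not meet the generic point of~$C$ (as $\sigma(\eta_C)=g\in G(F)$ and $\mathscr D_\alpha$ restricts on~$X_F$ to~$D_\alpha\subset X\setminus G_F$), its pullback $\sigma^*\mathscr D_\alpha$ is a well-defined effective Cartier divisor on~$C$, concentrated on finitely many closed points. Its degree is
\[ \deg_C(\sigma^*\mathscr D_\alpha)=\sum_{v\in C(k)} \ord_v\bigl(\sigma^*\mathscr D_\alpha\bigr). \]
On the other hand, pullback of Cartier divisors commutes with flat base change, hence in particular with the faithfully flat completion map $\Spec\widehat{\mathscr O}_{C,v}\to\Spec\mathscr O_{C,v}$, so that $\ord_v(\sigma^*\mathscr D_\alpha)$ equals the multiplicity of~$\tilde g^*\mathscr D_\alpha$ at the closed point of~$\Spec\widehat{\mathscr O}_{C,v}$. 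By the very definition of~$(g,\mathscr D_\alpha)_v$, this multiplicity is~$(g,\mathscr D_\alpha)_v$.

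Combining these two observations yields the desired equality. There is no real obstacle here: the content is essentially the compatibility between the global pullback $\sigma^*$ and the local extensions $\tilde g$, together with the tautology that the degree on~$C$ of an effective divisor supported at finitely many points is the sum of its local orders.
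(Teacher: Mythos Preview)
Your proof is correct and follows essentially the same approach as the paper's own proof: both argue that $\sigma^*\mathscr D_\alpha$ is a well-defined effective Cartier divisor on~$C$ (because $\sigma(\eta_C)=g\in G(F)$ avoids the support of~$\mathscr D_\alpha$), and that its degree is the sum of its local multiplicities, which by definition are the numbers~$(g,\mathscr D_\alpha)_v$. Your version is simply more explicit about the compatibility between the global section~$\sigma$ and the local extensions~$\tilde g$ via completion, a point the paper leaves implicit.
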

\begin{proof}
Since $g\in G(F)$, the Cartier divisor $\sigma_g^*(\mathscr D_\alpha)$
on~$C $
is well-defined and represents the inverse
image by~$\sigma_g$ of the line bundle $\mathscr O_{\mathscr X}(\mathscr
D_\alpha)$. The given formula asserts 
that its degree is the sum of its multiplicities at
all closed points of~$C $.
\end{proof}

For $\mathbf m\in\N^{\mathscr A}$, we define
the \tcb{subset} $G(\mathbf m)_v$  (also denoted $G(\mathbf m)$ if
no confusion can arise concerning the point~$v$) of $G(F_v)$ 
as the set of all points~$g$
such that $(g,\mathscr D_\alpha)_v=m_\alpha$ for
all $\alpha\in\mathscr A$.
For $\mathbf m\in\N^{\mathscr A}$ and $\beta\in\mathscr B_v$,
the \tcb{subset} $G(\mathbf m,\beta)$  of $G(\mathbf m)_v$ 
consists of points~$g$ such that
$(g,E_\beta)_v=1$ (hence $(g,E_{\beta'})_v=0$ for all $\beta'\in\mathscr B_v$
such that $\beta'\neq\beta$).
When $\mathscr B_v$ has a single element, we often call it~$\beta_v$.

\begin{lemm}
For every $\mathbf m\in\N^{\mathscr A}$ 
and every $\beta\in\mathscr B_v$, 
the sets $G(\mathbf m)_v$ and $G(\mathbf m,\beta)$ 
are bounded definable subsets of~$G(F_v)$ 
and 
$\displaystyle G(F_v)=\bigcup_{\mathbf m\in\N^{\mathscr A}}
G(\mathbf m)_v=
\bigcup_{\substack{\mathbf m\in\N^{\mathscr A}\\
  \beta\in\mathscr B_v}} G(\mathbf m,\beta)$
(disjoint unions).
\end{lemm}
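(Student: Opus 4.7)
The plan is to reduce both the decomposition statements and the definability/boundedness to a local analysis at points of the special fibre $\mathscr X_v$, exploiting the strict normal crossings structure built into the notion of a good model. By properness of $\pi\colon\mathscr X\to C$, any point $g\in G(F_v)\subset X(F_v)$ extends uniquely, via the valuative criterion, to a morphism $\tilde g\colon\Spec\widehat{\mathscr O}_{C,v}\to\mathscr X$, and the integers $(g,\mathscr D_\alpha)_v$ and $(g,E_\beta)_v$ coincide with the valuations of the pullbacks under~$\tilde g$ of local equations of the respective divisors; in particular, they are well-defined non-negative integers.

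The first decomposition $G(F_v)=\bigsqcup_{\mathbf m\in\N^{\mathscr A}}G(\mathbf m)_v$ is then tautological. For the refinement by $\beta$, I would invoke the identity $\sum_{\beta\in\mathscr B_v}\mu_\beta(g,E_\beta)_v=1$ recalled just before the lemma, which expresses $\tilde g^*\pi^{-1}(v)=[v]$ (since $\pi\circ\tilde g$ is the identity on $\Spec\widehat{\mathscr O}_{C,v}$). Because each $\mu_\beta\geq 1$ and each $(g,E_\beta)_v$ is a non-negative integer, this sum identity forces exactly one index $\beta\in\mathscr B_v$ to satisfy $(g,E_\beta)_v=1$, with $\mu_\beta=1$, and all others to vanish; this yields both the second disjoint union and the fact that $(g,E_\beta)_v\in\{0,1\}$.

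For definability and boundedness I would argue locally. Each stratum of the SNC divisor $\mathscr X_v+\sum_\alpha\mathscr D_\alpha$ can be covered by finitely many formal charts $(x_1,\dots,x_{n+1})$ on $\mathscr X$ in which the passing components become coordinate hyperplanes. In such a chart, the conditions $(g,\mathscr D_\alpha)_v=m_\alpha$ and $(g,E_{\beta'})_v=\delta_{\beta\beta'}$ translate into explicit valuation equalities on the $x_i(\tilde g)$, exhibiting $G(\mathbf m)_v$ and $G(\mathbf m,\beta)$ as finite unions of sets cut out by valuation formulas, hence as definable subsets of $G(F_v)$. Boundedness in the linear coordinates $(T_1,\dots,T_n)$ of $G=\ga^n$ then follows from the fact that each $T_i$ extends to a rational function on $\mathscr X$ whose polar divisor is a linear combination of the $\mathscr D_\alpha$ and the $E_\beta$, so $\ord_v(T_i(g))$ admits a lower bound that depends only linearly on $(\mathbf m,\beta)$. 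The main obstacle is the global organisation of the local charts so that the valuation conditions one extracts describe the whole sets $G(\mathbf m)_v$ and $G(\mathbf m,\beta)$ and not merely their intersections with a single chart.
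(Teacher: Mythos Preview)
Your proposal is correct; the decomposition argument and the boundedness argument are essentially the ones the paper gives (the paper phrases boundedness by noting that since $G$ is affine, $X\setminus G$ supports an ample divisor, which is the same content as your observation that the poles of each coordinate $T_i$ lie in $\bigcup_\alpha\mathscr D_\alpha\cup\bigcup_\beta E_\beta$).

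The definability step, however, is handled more efficiently in the paper than in your chart-by-chart argument. Rather than covering the SNC locus by formal coordinate patches and then reassembling, the paper observes globally that because each $\mathscr D_\alpha$ and each $E_\beta$ is an \emph{effective} Cartier divisor, the condition $(g,\mathscr D_\alpha)_v\geq n$ (resp.\ $(g,E_\beta)_v\geq 1$) cuts out a closed, hence definable, subset of $G(F_v)$; the equalities $(g,\mathscr D_\alpha)_v=m_\alpha$ are then obtained as differences of such closed sets. This one-line argument avoids precisely the ``global organisation of the local charts'' that you flag as the main obstacle: no local coordinate system is ever chosen, and no patching is required. Your approach would of course work and has the advantage of making the valuation description completely explicit (which is what one needs later, e.g.\ in Lemma~\ref{lemm.hensel}), but for the bare definability statement the paper's global argument is both shorter and cleaner.
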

\begin{proof}
Since $G$ is affine, $X\setminus G_F$ contains the support of an ample
line bundle. We thus see that the valuations of the coordinates
of the points of~$G(\mathbf m,\beta)$ 
are bounded from below.
Since $\mathscr D_\alpha$ (resp. $E_\beta$) is effective,
the condition~$(g,\mathscr D_\alpha)_v\geq n_\alpha$ 
(resp. the condition $(g,E_\beta)_v\geq 1$)
defines a definable subset.
Taking differences, one gets that the sets $G(\mathbf m)_v$ 
and $G(\mathbf m,\beta)$ are bounded definable.
The last assertion is obvious.
\end{proof}

\begin{lemm}\label{lemm.G0}
There exists a dense open subset~$C_1$ of~$C_0$ such that for every 
closed point $v\in C_1$, the following properties hold:
\begin{enumerate}
\item One has $\mathscr B_v=\mathscr B_{1,v}=\{\beta_v\}$;
\item The set $G({\mathbf 0})_v=G(\mathfrak o_v)$ is a subgroup of $G(F_v)$;
\item For every $\mathbf m\in\N^{\mathscr A}$ and every $\beta\in\mathscr B_v$,
the set $G(\mathbf m,\beta)$ is invariant under the action of $G(\mathbf 0)_v$.
\end{enumerate}
\end{lemm}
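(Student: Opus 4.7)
The plan is to construct $C_1$ as the intersection of several dense open subsets of $C_0$, each securing one of the desired properties. I begin by using the fact that $\mathscr X$ is flat over the smooth curve~$C$ with smooth geometrically irreducible generic fiber~$X$, so $\pi$ is smooth away from finitely many points of~$C$ and, by Stein factorization (or upper semicontinuity of the number of geometric connected components of fibers of a proper morphism), its fibers are geometrically connected on a dense open of~$C$. Over the resulting open, each fiber is smooth and geometrically irreducible, hence reduced with a single irreducible component of multiplicity one; this gives~(1). I then shrink further so that the generic-fiber equality $X\setminus\bigcup_\alpha D_\alpha=G_F$ extends to a set-theoretic equality $\mathscr X|_{C_1}\setminus\bigcup_\alpha \mathscr D_\alpha|_{C_1}=\mathscr G|_{C_1}$, where $\mathscr G=\ga^n\times C$; the obstruction to this equality lies again in finitely many fibers.

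Next, I arrange that the $G_F$-action $G_F\times X\to X$ spreads out to a morphism $a\colon\mathscr G|_{C_1}\times_{C_1}\mathscr X|_{C_1}\to\mathscr X|_{C_1}$. A priori it is only a rational map on $\mathscr G\times_C\mathscr X$, but its indeterminacy locus does not dominate~$C$ (being empty over the generic point), hence sits over finitely many points, which I remove from~$C_1$. For property~(2): any $g\in G(\mathbf 0)_v$ corresponds to a section $\tilde g\colon\Spec\widehat{\mathscr O}_{C,v}\to\mathscr X$ meeting no $\mathscr D_\alpha$, hence factoring through $\mathscr G|_{C_1}=\ga^n\times C_1$ by the preceding step, so $g\in\ga^n(\mathfrak o_v)=\mathfrak o_v^n=G(\mathfrak o_v)$. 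Conversely, any element of $G(\mathfrak o_v)$ plainly lies in $G(\mathbf 0)_v$. This identifies $G(\mathbf 0)_v$ with the subgroup $\mathfrak o_v^n$ of $G(F_v)=F_v^n$.

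For property~(3), the invariance of the $E_\beta$-multiplicity is automatic: since $\mathscr B_v=\{\beta_v\}$ with multiplicity one, $(h,E_{\beta_v})_v=1$ for every $h\in G(F_v)$. For the $\mathscr D_\alpha$-multiplicities I transport invariance from the generic fiber. Each component $D_\alpha$ is set-theoretically fixed by the connected group $G_F$ (which permutes the finite set of boundary components), whence $a^*D_\alpha=\pr_2^*D_\alpha$ as Cartier divisors on $G_F\times_F X$. Both $a^*\mathscr D_\alpha$ and $\pr_2^*\mathscr D_\alpha$ are Cartier divisors on $\mathscr G|_{C_1}\times_{C_1}\mathscr X|_{C_1}$, flat over $C_1$, and agree on the generic fiber, so they coincide. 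Pulling back along the section $(\tilde g_0,\tilde g)\colon\Spec\widehat{\mathscr O}_{C,v}\to\mathscr G|_{C_1}\times_{C_1}\mathscr X|_{C_1}$ for $g_0\in G(\mathbf 0)_v$ and $g\in G(F_v)$ gives
\[ \widetilde{g_0\cdot g}^*\mathscr D_\alpha=(\tilde g_0,\tilde g)^*a^*\mathscr D_\alpha=(\tilde g_0,\tilde g)^*\pr_2^*\mathscr D_\alpha=\tilde g^*\mathscr D_\alpha, \]
so $(g_0\cdot g,\mathscr D_\alpha)_v=(g,\mathscr D_\alpha)_v$ and $G(\mathbf m,\beta)$ is stable under translation by~$G(\mathbf 0)_v$. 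The main obstacle is precisely this last step: one must choose~$C_1$ small enough so that the action extends, so that $a^*\mathscr D_\alpha$ and $\pr_2^*\mathscr D_\alpha$ are well-defined Cartier divisors flat over~$C_1$, and so that the generic-fiber identity uniquely determines them over~$C_1$.
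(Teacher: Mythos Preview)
Your proof is correct and follows essentially the same approach as the paper: both arguments spread out the equivariant compactification structure of~$X_F$ over a dense open~$C_1$ (smoothness with geometrically integral fibers, the extended action~$a$, the identification of the complement of the~$\mathscr D_\alpha$ with the group scheme, and the key Cartier-divisor identity $a^*\mathscr D_\alpha=\pr_2^*\mathscr D_\alpha$) and read off properties (1)--(3) from this. Your write-up is in fact more detailed than the paper's, which simply lists the spread-out properties and declares the lemma to follow.
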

\begin{proof}
By assumption, $X$ is a smooth equivariant compactification
of the $F$-group scheme~$G_F$. 
\tcr{By spreading-out,} there exists a dense Zariski open subset~$C_1$ of~$C $
such that $\mathscr X_{C_1}$ is a smooth equivariant compactification
of the $C_1$-group scheme~$G_{C_1}$, more precisely, such that 
the following properties hold:
\begin{itemize}
\item The morphism $\mathscr X_{C_1}\ra C_1$ is proper and smooth,
with geometrically integral fibers;
\item The action $G_F\times X\ra X$ of~$G_F$ on~$X$ extends
to an action $m\colon G_{C_1}\times \mathscr X_{C_1} \ra \mathscr X_{C_1}$;
\item The image of the section
$\sigma_0\in\mathscr X(C_1)$ extending the point $0\in G(F)$
is disjoint from all~$\mathscr D_\alpha$;
\item The morphism 
$g\mapsto m(g,\sigma_0)$ is an isomorphism from~$G_{C_1}$
to an open dense subscheme of~$\mathscr X_{C_1}$;
\item The Cartier
divisors $m^* \mathscr D_\alpha-\pr_2^*\mathscr D_\alpha$ 
on~$G_{C_1}\times \mathscr X_{C_1}$
are trivial, so that
$\mathscr X_{C_1}\setminus G_{C_1}$
is the union of the divisors~$\mathscr D_{\alpha,C_1}$.
\end{itemize}
This open set $C_1$ satisfies the requirements of the Lemma.
\end{proof}

\begin{lemm}\label{lemm.almost-all}
Let $v\in C (k)$.
For every integer~$r$, let $G(\mathfrak m_v^r)$ 
be the bounded definable subgroup of~$G(F_v)$
consisting of points~$g$ such that,
in the identification $G=\ga^n$, 
$\ord_v(g_i)\geq r$ for $i\in\{1,\dots,n\}$.
For every ~$v\in C (k)$, there exists an integer~$r_v$ such that,
for every $\mathbf m\in\N^{\mathscr A}$
and every $\beta\in\mathscr B_v$,
$G({\mathbf m},\beta)$ is invariant under~$G(\mathfrak m_v^{r_v})$.
Moreover, one can take $r_v=0$ for all but
finitely $v\in C (k)$.
\end{lemm}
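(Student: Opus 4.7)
The ``moreover'' clause is immediate from Lemma~\ref{lemm.G0}: on the dense open subset $C_1\subset C_0$ it produces, one has $G(\mathbf 0)_v=G(\mathfrak o_v)=G(\mathfrak m_v^0)$ and item~(3) of that lemma gives the required invariance of every $G(\mathbf m,\beta)$ under this group; since $C\setminus C_1$ is finite, one may take $r_v=0$ for all but finitely many $v$.

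To produce $r_v$ at a single closed point $v\in C(k)$, the plan is to work chart by chart on the special fibre $\mathscr X_v$ and exploit the strict normal crossings hypothesis. Cover $\mathscr X_v$ by finitely many affine open subsets $V_1,\dots,V_s$ of $\mathscr X$, choosing on each~$V_i$ a regular system of parameters $(u^{(i)}_1,\dots,u^{(i)}_{n+1})$ such that every component of $\sum_\alpha\mathscr D_\alpha+\sum_\beta E_\beta$ meeting $V_i$ is the vanishing locus of one of the $u^{(i)}_j$. For an arc $\tilde g\colon\Spec\mathfrak o_v\to\mathscr X$ with $\tilde g(v)\in V_i$, the intersection numbers $(g,\mathscr D_\alpha)_v$ and $(g,E_\beta)_v$ are then precisely the $v$-adic valuations of the corresponding coordinates $u^{(i)}_j(\tilde g)\in\mathfrak o_v$, so it suffices to arrange that these valuations are preserved under translation by $h\in G(\mathfrak m_v^{r_v})$, uniformly in $g$.

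The technical core is the following bound. Each $u^{(i)}_j$, viewed on $V_i\cap G_F$ as a rational function of the standard coordinates $x_1,\dots,x_n$ on $G=\ga^n$, admits, thanks to the SNC hypothesis, a factorisation on $V_i$ as a monomial in the local defining equations of the boundary divisors meeting $V_i$ times a regular unit. A first-order Taylor expansion in $h$, combined with this factored form, yields an estimate of the shape
\[ \ord_v\bigl(u^{(i)}_j(\widetilde{g+h})-u^{(i)}_j(\tilde g)\bigr)\geq\ord_v(h)+\ord_v\bigl(u^{(i)}_j(\tilde g)\bigr)-\delta_i, \]
where $\delta_i\geq 0$ is an integer depending only on $V_i$ (essentially the pole orders of the partial derivatives of the rational expressions of the $u^{(i)}_j$ in $x_1,\dots,x_n$ along the boundary). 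Setting $r_v:=1+\max_i\delta_i$, any $h\in G(\mathfrak m_v^{r_v})$ thus changes each $u^{(i)}_j(\tilde g)$ only by a quantity of $v$-adic valuation strictly greater than $\ord_v(u^{(i)}_j(\tilde g))$, preserving the valuation, and hence all intersection numbers. The key feature of the estimate is the additional summand $\ord_v(u^{(i)}_j(\tilde g))$: it encodes the fact that the coordinate change \emph{degenerates} along the boundary stratum $\tilde g$ is approaching, so that perturbations are automatically suppressed in the regime where $\mathbf m$ is large. This is precisely what yields the uniformity in $\mathbf m$.

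The main obstacle is establishing the displayed estimate uniformly in $g$; this requires careful bookkeeping of the monomial-times-unit factorisation provided by SNC, together with an additional argument handling the case where $\widetilde{g+h}$ has its reduction in a different chart $V_{i'}$ from $\tilde g$ (for which one compares the two computations on a refinement of the two charts, using that the intersection numbers are global invariants of the arc). Finiteness of the open cover and of the families of divisors $\mathscr D_\alpha$ and $E_\beta$ meeting each $V_i$ ensures that all resulting constants are bounded uniformly, yielding a single $r_v$ that works simultaneously for every $(\mathbf m,\beta)$.
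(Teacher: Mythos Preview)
Your strategy is in the right direction but has a genuine gap: the displayed estimate does not follow from the strict normal crossings hypothesis alone, and your justification misses the crucial ingredient, namely the $G$-equivariance of the boundary.

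The paper's argument is more direct. Since each $D_\alpha$ is $G_F$-invariant on the generic fibre, the ratio $u=m^*f_\alpha/\pr_2^*f_\alpha$ (where $m\colon G_F\times X\to X$ is the action and $f_\alpha$ is the canonical section of $\mathscr O(\mathscr D_\alpha)$) is a rational function on $G\times\mathscr X$ whose domain of definition contains all of $G_F\times\mathscr X_F$. Covering $\mathscr X$ by affines $\Spec A_i$, one then has $u\in A_i[\mathbf T][1/\varpi_v]$, hence $\varpi_v^m u\in A_i[\mathbf T]$ for some $m$; combined with $u(0,\cdot)=1$, this forces $u(h,x)$ to be a $v$-adic unit for every integral point $x$ whenever $\ord_v(h_i)>m$. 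The vertical components $E_\beta$ are handled the same way (being empty on the generic fibre, they are trivially $G_F$-invariant). No SNC, no chart-by-chart Taylor expansion.

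Your estimate $\ord_v(u^{(i)}_j(g+h)-u^{(i)}_j(g))\geq \ord_v(h)+\ord_v(u^{(i)}_j(g))-\delta_i$ is in fact \emph{equivalent} to what the paper proves, but the justification you give does not establish it. The estimate amounts to a uniform lower bound on $\ord_v\bigl(\partial_{x_k}u^{(i)}_j/u^{(i)}_j\bigr)$, and this holds precisely because the invariant vector fields $\partial_{x_k}$ are \emph{tangent} to $D_\alpha$ on the generic fibre---which is exactly the $G_F$-invariance of $D_\alpha$. SNC by itself does not give this: in a non-equivariant compactification one can easily have local boundary equations whose logarithmic derivatives blow up along other boundary strata (think of $u=x_1/x_2$ in a blow-up chart). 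Your ``monomial-times-unit factorisation'' of $u^{(i)}_j$ is either trivial ($u^{(i)}_j$ is itself a local defining equation) or unclear, and in any case does not supply the missing tangency. Once you invoke equivariance to get $\partial_{x_k}\phi/\phi$ regular on the generic fibre, the bounded-denominator argument is the same as the paper's, and the chart-matching issue you worry about dissolves because the conclusion is about intersection numbers, which are intrinsic.
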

\begin{proof}
When $v$ belongs to the open subset~$\tcb{C_1}$ constructed by Lemma~\ref{lemm.G0},
one may take $r_v=0$, hence the last claim.

In the remaining of the proof, we fix $v\in C ( k)$.
Fix $\alpha\in\mathscr A$ and let $f_\alpha$ be the canonical
global section of $\mathscr O_{\mathscr X}(\mathscr D_\alpha)$
\tcr{whose zero-divisor is~$\mathscr D_\alpha$}.
\tcr{We need to prove that there exists an integer~$r_v$ such that
$(gg',\mathscr D_\alpha)_v=(g',\mathscr D_\alpha)_v$
and $(gg',E_\beta)_v=(g',E_\beta)_v$,
for every $g\in G(\mathfrak m_v^{r_v})$, 
every $g'\in G(F)\subset\mathscr X(F)$,
every $\alpha\in\mathscr A$, and
every $\beta\in\mathscr B_v$.}

Since $G_F$ fixes~$\mathscr D_\alpha$ on the generic fiber, the line bundles
$m^* \mathscr O_{\mathscr X}(\mathscr D_\alpha)$ and $\pr_2^*\mathscr
O_{\mathscr X}(\mathscr D_\alpha)$
are isomorphic on~$G_F\times \mathscr X_F$ and $u=m^*f_\alpha/\pr_2^*\tcb{f}_\alpha$
is a rational function on~$G_F\times X$. The domain of definition
of~$u$ contains $G_F\times \mathscr X_F$. 
Since $\mathscr X$ is proper over~$C $, 
there exists a closed subset~$Z$ of~$G_{C }$ disjoint
from~$G_F$ such that $u$ is defined on \tcb{the complement of}~$\pr_{\tcb{1}}^{-1}(Z)$.
Moreover, $u(0,x)=1$ on~$X$.
Cover $\mathscr X$ by finitely many affine open subsets~$\Spec (A_i)$.
Then $u$ defines a rational function on 
$\ga^n\times\Spec (A_i)=\Spec (A_i[\mathbf T])$. Since 
$u$ is defined on 
$\Spec (A_i[\mathbf T][1/\varpi_v])$,
there exists an integer~$m$
such that $\varpi_v^m u\in A_i[\mathbf T]$ for all~$i$,
$\varpi_v$ denoting an uniformizer of~$\mathfrak o_v$.

It is now clear that if \tcr{$\ord_v(g_i)>m$} for $i\in\{1,\dots,n\}$,
 then \tcr{$\ord_v(u(g,x))=0$} for every integral point of~$\Spec (A_i)$.
Since every rational point of~$X$ extends to an integral point
of some~$\Spec (A_i)$, we obtain the desired conclusion for~$\mathscr D_{\alpha}$.

Now fix $\beta\in\mathscr B_v$.
Since $E_\beta$ is vertical, $E_\beta\otimes_C F=\emptyset$
and~$E_\beta$ is fixed by~$G_F$ on the generic fiber. Then, the
proof is identical to the one for $\mathscr D_\alpha$.
\end{proof}

\begin{coro}
For every $\mathbf m\in\N^{\mathscr A}$ and every $\beta\in {\mathscr B_v}$,
the characteristic function
of $G(\mathbf m,\beta)$ is a motivic Schwartz-Bruhat function on~$G(F_v)$
in the sense of \S\ref{ss-sbfunctions-general}.
\end{coro}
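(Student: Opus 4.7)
The plan is to exhibit $G(\mathbf m,\beta)$ as the preimage of a constructible subset under a quotient map to a finite-dimensional $k$-variety, so that its characteristic function lives in $\ExpMot_{\Aff_k^{n(M,N)}}$ for some $(M,N)$. First, the previous lemma gives boundedness of $G(\mathbf m,\beta)$ inside $G(F_v)\simeq F_v^n$; choosing coordinates coming from the identification $G=\ga^n$, this furnishes an integer $M$ with
\[ G(\mathbf m,\beta) \subset t^M \mathfrak o_v^n. \]
Second, Lemma~\ref{lemm.almost-all} provides an integer $N\geq M$ such that $G(\mathbf m,\beta)$ is stable under the translation action of $G(\mathfrak m_v^N)=t^N\mathfrak o_v^n$. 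Hence $G(\mathbf m,\beta)$ is the preimage of a subset $\bar W$ of the quotient $t^M\mathfrak o_v^n/t^N\mathfrak o_v^n$, which we identify with $\Aff_k^{n(M,N)}(k)$ via the $t$-adic expansion as in~\S\ref{ss-sbfunctions}.

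Once one knows that $\bar W$ is a constructible subset of $\Aff_k^{n(M,N)}$, the characteristic function of $G(\mathbf m,\beta)$ is represented by the class $[\bar W,0]$ in $\ExpMot_{\Aff_k^{n(M,N)}}$, so it is a motivic Schwartz-Bruhat function of level $(M,N)$. To prove constructibility, I would interpret the defining conditions at the level of the arc space $\mathscr L(\mathscr X)$ via the good model $\pi\colon \mathscr X\to C$. Each arc $g\colon \Spec\mathfrak o_v\to \mathscr X$ lands, by properness, in some affine chart of $\mathscr X$ on which the boundary component $\mathscr D_\alpha$ has a local equation $f_\alpha$; the condition $(g,\mathscr D_\alpha)_v=m_\alpha$ then translates into $\ord_v(f_\alpha\circ g)=m_\alpha$, which is a locally closed condition on finitely many jet coordinates. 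The condition $(g,E_\beta)_v=1$ is handled in exactly the same way. After intersecting with the bounded piece $t^M\mathfrak o_v^n$ and with the stratification coming from the chosen affine cover, the defining conditions become constructible at some finite Greenberg level, and then descend to a constructible subset of $\Aff_k^{n(M,N)}$ upon possibly enlarging $N$ to dominate that level.

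The main subtlety is to ensure that the level $N$ supplied by Lemma~\ref{lemm.almost-all} is genuinely large enough so that the algebraic conditions stabilize, \ie, that no information about jet coordinates beyond degree $N-1$ is needed to decide whether a given arc lies in $G(\mathbf m,\beta)$. This is guaranteed precisely by the $G(\mathfrak m_v^N)$-invariance: two arcs that agree modulo $t^N$ differ by an element of $G(\mathfrak m_v^N)$, so they give the same intersection numbers with $\mathscr D_\alpha$ and with $E_\beta$. Thus, in the affine charts, any polynomial expression appearing in the constructible description can be truncated at degree $N-1$ without affecting the condition, which is what makes $\bar W$ a genuine constructible subset of $\Aff_k^{n(M,N)}$ and completes the argument.
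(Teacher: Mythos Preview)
Your proposal is correct and follows essentially the same route as the paper, which states this corollary without proof as an immediate consequence of the preceding lemmas. The only remark is that your argument for constructibility via arc spaces and Greenberg levels, while correct, is redundant: the paper has already recorded in an earlier lemma that $G(\mathbf m,\beta)$ is a \emph{bounded definable} subset of~$G(F_v)$, so once you combine this definability with the invariance under~$G(\mathfrak m_v^{r_v})$ from Lemma~\ref{lemm.almost-all}, the conclusion is immediate.
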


\Subsection{Integral points}

\begin{lemm}\label{lemm.replaceU}
Let $\mathscr U$ be a flat model of~$G_F$ over~$C_0=C \setminus S$,
let $\mathscr X$ be a flat model of~$X$ over~$C$.
There exists a good model~$\mathscr X'$ of~$X$
over~$C$ whose projection $\pi'\colon \mathscr X'\to C$
factors through~$\mathscr X$,
and a open subset~$\mathscr U'$  of~$\mathscr X'\times_C {C_0}$
such that for every point~$v\in C_0$, 
the intersection~$G(F_v)\cap \mathscr U(\mathfrak o_v)$
(taken in \tcr{$\mathscr U(F_v)$}) coincides with the intersection
$G(F_v)\cap\mathscr U'(\mathfrak o_v)$ taken in~$\mathscr X'(\mathfrak o_v)$.
\tcr{We may also assume that $\mathscr U'$
is the complement to a divisor with strict normal crossings in~$\mathscr X'$.}
\tcr{Moreover, $G(F_v)\cap\mathscr U(\mathfrak o_v)$
is non-empty if and only if $\mathscr U'(\mathfrak o_v)$
is non-empty.}
\end{lemm}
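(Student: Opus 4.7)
The strategy is to reduce to proper models by compactifying $\mathscr U$, then to build $\mathscr X'$ as a good model dominating $\mathscr X$ and equipped with a morphism to this compactification, and finally to define $\mathscr U'$ as the preimage of $\mathscr U$. First, apply Nagata's compactification theorem to $\mathscr U \to C_0$ to obtain a proper $C_0$-scheme $\bar{\mathscr U}$ containing $\mathscr U$ as a dense open; after blowing up inside $\bar{\mathscr U}\setminus \mathscr U$ if necessary, arrange that this complement is the support of a Cartier divisor~$\Delta$. Since $\bar{\mathscr U}_F$ and $X$ are two proper compactifications of $G_F$, the identity on $G_F$ extends to a birational map $X \dashrightarrow \bar{\mathscr U}_F$, which globalizes to a rational map $\phi\colon \mathscr X\times_C C_0 \dashrightarrow \bar{\mathscr U}$ over $C_0$.

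Next, eliminate the indeterminacies of~$\phi$ by blowing up a coherent sheaf of ideals on $\mathscr X\times_C C_0$ to obtain $\widetilde{\mathscr X}_0 \to \mathscr X\times_C C_0$ on which $\phi$ becomes an honest morphism $\widetilde{\mathscr X}_0 \to \bar{\mathscr U}$. Extend the blowup center to a coherent ideal sheaf on all of~$\mathscr X$ by scheme-theoretic closure, and blow it up to obtain a projective birational morphism $\widetilde{\mathscr X}\to \mathscr X$ whose restriction to $C_0$ recovers $\widetilde{\mathscr X}_0$. Now apply Hironaka's embedded resolution of singularities to $\widetilde{\mathscr X}$, designating the union of the strict transforms of the $\mathscr D_\alpha$, of all non-smooth fibers of $\widetilde{\mathscr X}\to C$, and of the pullback of~$\Delta$ as the divisor to be made strict normal crossings. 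The resulting $\mathscr X' \to \widetilde{\mathscr X} \to \mathscr X$ is a good model of $X$ over~$C$, and the composed morphism $\mathscr X'\times_C C_0 \to \bar{\mathscr U}$ remains defined because Hironaka's resolution proceeds by blowups along smooth centers.

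Define $\mathscr U' \subset \mathscr X'\times_C C_0$ as the preimage of~$\mathscr U$ under $\mathscr X'\times_C C_0\to \bar{\mathscr U}$; its complement is the pullback of $\Delta$, hence a divisor in $\mathscr X'\times_C C_0$, and by adjoining the full fibers over~$S$ we obtain a divisor in all of $\mathscr X'$ whose complement fiberwise realizes the final assertion of the lemma. For the integral-point equality, fix $v \in C_0$ and $g \in G(F_v) \subset X(F_v)$. Properness of~$\mathscr X'$ and of $\bar{\mathscr U}$ over $\mathfrak o_v$ yields unique extensions $s_g\colon\Spec\mathfrak o_v\to \mathscr X'$ and $\bar u_g\colon\Spec\mathfrak o_v\to \bar{\mathscr U}$ of~$g$, and uniqueness forces $\bar u_g$ to equal the image of~$s_g$ under $\mathscr X'\times_C C_0\to\bar{\mathscr U}$; hence $g \in \mathscr U(\mathfrak o_v)$ if and only if $\bar u_g$ factors through~$\mathscr U$, if and only if $s_g \in \mathscr U'(\mathfrak o_v)$, as required.

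The most delicate point is arranging that the successive blowups interact consistently: the extension of the ideal from $\mathscr X\times_C C_0$ to $\mathscr X$ must not introduce spurious components that destroy the morphism to $\bar{\mathscr U}$, and the subsequent embedded resolution must preserve both this morphism and the factorization through $\mathscr X$ while simultaneously achieving the good-model SNC condition. This is essentially a bookkeeping exercise facilitated by the functoriality of Hironaka's resolution under open restriction to~$C_0$.
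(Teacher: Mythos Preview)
Your argument is correct and reaches the same conclusion, but it proceeds by a genuinely different route from the paper's proof. The paper works in the opposite direction: it first blows up~$\mathscr U$ so that the identity $G_F\hookrightarrow X$ extends to a morphism $p\colon\mathscr U\to\mathscr X$, then applies Raynaud--Gruson flattening (blowing up the \emph{target}~$\mathscr X$) to make~$p$ flat, at which point $p$ becomes an open immersion and one can take $\mathscr U'$ to be the image; a further blow-up makes the complement a divisor, and embedded resolution finishes. In contrast, you Nagata-compactify~$\mathscr U$ and produce a morphism from a blow-up of~$\mathscr X$ \emph{towards}~$\bar{\mathscr U}$, then pull~$\mathscr U$ back. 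Your approach avoids the flattening theorem and the (not entirely trivial) observation that a flat birational morphism is an open immersion, at the cost of invoking Nagata and carrying along the auxiliary compactification~$\bar{\mathscr U}$. The paper's proof is terser; yours makes the verification of the integral-point equality via the valuative criterion more transparent, since properness of~$\bar{\mathscr U}$ does the work directly. Both rely on embedded resolution in the same way at the end.
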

\begin{proof}
Up to replacing~$\mathscr U$ by an adequate blow-up,
we may assume that the open immersion \tcr{$i\colon G_F\hra X$} extends
to a morphism~$p\colon \mathscr U\ra\mathscr X$. 
Then, replacing~$\mathscr X$ by some blow-up~$\mathscr X'$ and
$\mathscr U$ by its strict transform~$\mathscr U'$, we may assume that $p$
is flat (\cite{raynaud-gruson1971}, Théorème~5.2.2);
it is then a open immersion. A further blowing-up
allows to assume that $\mathscr X'\setminus\mathscr U'$
is a divisor.
Applying embedded resolution of singularities, we may also assume
that $\mathscr X'$ is smooth over~$k$, that the fibers of its projection
to~$C $ are divisors with strict normal crossings,
as well as $\mathscr X'\setminus\mathscr U'$.

\tcr{%
Finally, if $G(F_v)\cap\mathscr U(\mathfrak o_v)$
is non-empty, then $\mathscr U'(\mathfrak o_v)$ is non-empty as well.
Conversely, assume that $\mathscr U'(\mathfrak o_v)$ is non-empty.
Then $\mathscr U'$ meets the smooth locus of~$\mathscr X'\to C$,
so that $\mathscr U'(\mathfrak o_v)$ has non-empty interior;
in particular, $G(F_v)\cap \mathscr U'(\mathfrak o_v)$ is non-empty.}
\end{proof}

\begin{lemm}\label{lemm.1U}
Let $\mathscr U$ be a flat model of~$G_{\tcb{F}}$ over $C_0$.
For every $v\in C_0(k)$, $\mathscr U(\mathfrak o_v)$
is a bounded definable subset of $G(F_v)$.
For almost all~$v\in C_0(k)$, one has even
$\mathscr U(\mathfrak o_v)=G(\mathbf 0)_v$.
\end{lemm}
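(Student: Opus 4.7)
The approach is to reduce, via Lemma~\ref{lemm.replaceU}, to the case where $\mathscr U$ is embedded as the complement of a divisor inside a good model of~$X$, and then to read off the condition ``$g\in\mathscr U(\mathfrak o_v)$'' from the intersection numbers $(g,\mathscr D_\alpha)_v$ and $(g,E_\beta)_v$ introduced just above. Applying Lemma~\ref{lemm.replaceU} yields a good model $\pi'\colon\mathscr X'\to C$ of~$X$ together with an open subset $\mathscr U'\subset\mathscr X'\times_C C_0$ such that $\mathscr X'\setminus\mathscr U'$ is a divisor and $\mathscr U(\mathfrak o_v)=\mathscr U'(\mathfrak o_v)\cap G(F_v)$ for every $v\in C_0(k)$. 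Because $\mathscr X'$ is proper over~$C$, the valuative criterion identifies $\mathscr X'(\mathfrak o_v)$ with $X(F_v)$, so that $g\in\mathscr U'(\mathfrak o_v)$ amounts to the reduction $\bar g\in\mathscr X'_v$ avoiding every component of $\mathscr X'_v\setminus\mathscr U'_v$. The horizontal components of $\mathscr X'\setminus\mathscr U'$ are precisely the closures $\mathscr D_\alpha$ (since the generic fiber of~$\mathscr U'$ is $G_F=X\setminus\bigcup D_\alpha$), while its vertical components are some of the $E_\beta$'s lying over a finite set of closed points of~$C$.

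For $g\in G(F_v)$, the condition $\bar g\notin\mathscr D_{\alpha,v}$ translates to $(g,\mathscr D_\alpha)_v=0$, and $\bar g\notin E_\beta$ to $(g,E_\beta)_v=0$. Combined with the identity $\sum_{\beta\in\mathscr B_v}\mu_\beta(g,E_\beta)_v=1$ recorded earlier, this yields the decomposition
\[ \mathscr U(\mathfrak o_v)=\bigsqcup_{\beta}G(\mathbf 0,\beta), \]
the index $\beta$ ranging over those $\beta\in\mathscr B_{1,v}$ for which the component~$E_\beta$ is not contained in $\mathscr X'\setminus\mathscr U'$. Each $G(\mathbf 0,\beta)$ was shown bounded and definable in the lemma established just above, so the same holds for this finite disjoint union, establishing the first assertion.

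For the ``almost all'' statement I would combine Lemma~\ref{lemm.G0}, which supplies a dense open $C_1\subset C_0$ on which $\mathscr B_v=\mathscr B_{1,v}=\{\beta_v\}$ and $G(\mathbf 0)_v=G(\mathfrak o_v)$, with the fact that the divisor $\mathscr X'\setminus\mathscr U'$ on the noetherian scheme~$\mathscr X'$ has only finitely many irreducible components, hence only finitely many vertical ones, touching only finitely many closed points of~$C$. For $v\in C_1$ outside this finite exceptional set, the decomposition above collapses to $\mathscr U(\mathfrak o_v)=G(\mathbf 0,\beta_v)=G(\mathbf 0)_v$. No serious obstacle is expected; the only real bookkeeping point is the correct matching of components of $\mathscr X'\setminus\mathscr U'$ with intersection-number conditions on~$g$, and the preceding lemmas put all the necessary machinery in place.
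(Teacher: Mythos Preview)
Your argument is correct but follows a different path from the paper's. The paper does not invoke Lemma~\ref{lemm.replaceU} here; instead it proves boundedness by a direct coordinate computation: the $n$ coordinate functions $f_1,\dots,f_n$ of~$G_F=\ga^n_F$ are rational functions on~$\mathscr U$, regular on the generic fiber; after resolving their indeterminacies and covering~$\mathscr U$ by finitely many affine opens $\Spec A_j$, one finds an integer~$r$ with $\varpi_v^r f_i\in A_j$ for all~$i,j$, so that $\ord_v(f_i(g))\geq -r$ for every $g\in\mathscr U(\mathfrak o_v)$. Definability and the ``almost all'' assertion are handled by spreading out the identification $\mathscr U_F\simeq G_F$ to an isomorphism over a dense open subset of~$C_0$.

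Your route, by contrast, passes through the compactification and the stratification into the sets $G(\mathbf 0,\beta)$. This is less elementary but more structural, and it yields as a by-product the explicit decomposition $\mathscr U(\mathfrak o_v)=\bigsqcup_\beta G(\mathbf 0,\beta)$ that the paper ultimately needs (and re-derives) for the adelic description of the height zeta function in~\S3.5. One point worth making explicit in your write-up: the sets $G(\mathbf 0,\beta)$ appearing in your decomposition are defined relative to the \emph{new} good model~$\mathscr X'$ supplied by Lemma~\ref{lemm.replaceU}, not the originally fixed~$\mathscr X$; the boundedness/definability lemma you cite applies verbatim to any good model, but the notation should track this change.
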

\begin{proof}
\tcr{We may assume that} $\mathscr U$ is an open subset of~$\mathscr X$;
it is then clear that $\mathscr U(\mathfrak o_v)$ is definable
in~$G(F_v)$ and that it equals $\ga^n(\mathfrak o_v)$
for almost all~$v\in C_0(k)$ (Lemma~\ref{lemm.G0}).
Let us now prove its boundedness.

We view the $n$ coordinate functions on~$G_F=\ga[F]^n$ 
as rational functions $f_1,\dots,f_n$ on~$\mathscr U$, 
regular over its generic fiber $\mathscr U_F=G_F$.
Up to resolving the indeterminacies of the~$f_i$ (which
replaces~$\mathscr U$ by some other scheme~$\mathscr U'$ but 
does not change the sets $\mathscr U(\mathfrak o_v)$,
we view the $f_i$ as regular morphisms from~$\mathscr U$
to~$\P^1_C$, such that $f_i^*(\{\infty\})\cap \mathscr U_F=\emptyset$.

Cover $\mathscr U$ by finitely many affine open subsets~$\Spec (A_j)$.
There exists an integer~$r$ such that $\varpi_v^r f_i\in A_j\otimes \mathfrak o_v$
for all~$i$ and~$j$. For every point~$g\in \mathscr U(\mathfrak o_v)$,
there exists~$j$ such that the morphism $g\colon\Spec(\mathfrak
o_v)\ra\mathscr U$ 
restricts to a morphism $\Spec(\mathfrak o_v)\ra \Spec (A_j)$,
because $\mathfrak o_v$ is a local ring.
Then, $\ord_v(f_i(g))\geq -r$,
so that $\mathscr U(\mathfrak o_v)$ is bounded in~$G(F_v)$.

The last assertion follows from the fact that the
equality $\mathscr U_F=G_F$ extends to an
isomorphism over a dense open subset of~$C_0$.
\end{proof}

\subsection{Height zeta functions}

Let $(\lambda_\alpha)$ be a family of positive integers and 
let $\mathscr L$ be the 
line bundle~$\sum_{\alpha\in\mathscr A}\lambda_\alpha \mathscr L_\alpha$
on the chosen good model~$\mathscr X$.
Let $\mathscr U$ be a flat model of~$G_F$ over
the affine curve $C_0=C\setminus S$.
For every integer~$n\in\Z$, 
let $M_n$ be the moduli space of sections~$\sigma\colon C \ra\mathscr X$ 
such that $\sigma(\eta_{C })\in G(F)$,
$\sigma(C_0)\subset\mathscr U$ and
$\deg_{C } (\sigma^*\mathscr L)=n$.
By Proposition~\ref{lemm.hilbert}, this moduli space
exists as a quasi-projective $k$-scheme, and is empty for $n\ll 0$.
The geometric analogue of Manin's \emph{height zeta function} 
is the formal Laurent series in one variable~$T$
with coefficients in $\Mot_k$ given by
\begin{equation}\label{eq.ZT}
 Z_\lambda(T)=\sum_{n\in\Z} [M_{n}]T^{n} \in\GVarM_k\lbra T\rbra [T^{-1}]. 
\end{equation}

As was already the case in number theory, it is convenient
to separate the roles of the various divisors~$\mathscr D_\alpha$
and to introduce a multivariable height zeta function.
So, for every $\mathbf n=(n_\alpha)\in\Z^{\mathscr A}$, let $M_{\mathbf n}$
be the moduli space of sections $\sigma\colon C \ra \mathscr X$
such that $\sigma(\eta_{C })\in G(F)$, $\sigma(C_0)\subset\mathscr U$
and $\deg_{C }(\sigma^*\mathscr L_\alpha)=n_\alpha$
for every $\alpha\in\mathscr A$.
Again by Proposition~\ref{lemm.hilbert}, this moduli space
exists as a quasi-projective $k$-scheme $M_{\mathbf n}$; 
moreover, there exists an integer~$m$ such that $M_{\mathbf n}=\emptyset$
if $n_\alpha<-m$ for some $\alpha\in\mathscr A$.
One then defines the generating series
\begin{equation}\label{eq.ZT2}
 Z(\mathbf T)=\sum_{\mathbf n\in\Z^{\mathscr A}} 
   [M_{\mathbf n}]\mathbf T^{\mathbf n} 
\in \Mot_k \lbra (T_\alpha)\rbra [ \prod_\alpha T_\alpha^{-1} ].
\end{equation}
By definition of~$\mathscr L$, we have
\begin{equation}\label{eq.ZT/ZT2}
 Z_{\lambda}(T) 
= Z( (T^{\lambda_\alpha}) )
=\sum_{m\in\Z} \left( \sum_{\substack{\mathbf n\in\Z^{\mathscr A} \\ \lambda\cdot\mathbf n=m}} [M_{\mathbf n}] \right) T^m  \in \Mot_k \lbra T\rbra [T^{-1}]
.
\end{equation}

\paragraph{}\label{par.B0}
In the sequel, we assume that $\mathscr U$
is an open subset of~$\mathscr X$. Its complement
consists of the union of the divisors $\mathscr D_\alpha$,
and of the vertical  components~$E_\beta$,
for $\beta$ in a finite subset~$\mathscr B^0$ of~$\mathscr B$.
By Lemma~\ref{lemm.replaceU},
this does not restrict the generality.
We then set $\mathscr B^0_v=\mathscr B^0\cap\mathscr B_v$ for
every $v\in C(k)$, and define
\[ \mathscr B_0 = \mathscr B_1 \setminus 
     \left(\bigcup_{v\in C_0} \mathscr B^0_v\right); \]
set also $\mathscr B_{0,v}=\mathscr B_0\cap\mathscr B_v$.
Let $\mathbf m_v\in\N^{\mathscr A}$ and $\beta_v\in\mathscr B_{v}$.
We say that the pair \emph{$(\mathbf m_v,\beta_v)$ is $v$-integral}
if either $v\not\in C_0$,
or if $v\in C_0$,
$\beta_v\in\mathscr B_0$ and $m_{\alpha,v}=0$ for every $\alpha$. 
In other words, the union of the sets $G(\mathbf m_v,\beta_v)$
for all $v$-integral pairs $(\mathbf m_v,\beta_v)$
is equal to 
$\mathscr U(\mathfrak o_v)$ if $v\in C_0$,
and to $G(F_v)$ otherwise.


\subsection{Adelic descriptions}
For every subset~$W$ of $G(\AD_F)$
whose characteristic function is an adelic 
motivic Schwartz-Bruhat function,
the intersection $G(F)\cap W$
is represented by a constructible set~$[W]$ over~$k$.
Our goal now is to describe a family of adelic
sets $G(\mathbf m,\beta)$ which will allow us to recover
the constructible sets~$M_{\mathbf n}$.

Let $\mathbf m=(\mathbf m_v)_v$
and $\beta=(\beta_v)_v$ be families
indexed by $v\in C(k)$,
where $\mathbf m_v=(m_{\alpha,v})\in\N^{\mathscr A}$ and $\beta_v\in \mathscr B_v$
for all~$v$.
We say that \emph{$(\mathbf m,\beta)$ is integral} if 
$(\mathbf m_v,\beta_v)$ is $v$-integral  for every $v$.
For each family $(\mathbf m,\beta)$,
define a set
\[ G(\mathbf m,\beta)
= \prod_{v\in C (k)} G(\mathbf m_v,\beta_v)\]
in the  product of all $G(F_v)$.
If $(\mathbf m,\beta)$ is integral, then 
the characteristic function of $G(\mathbf m,\beta)$
is an adelic motivic Schwartz--Bruhat function, because
then $G(\mathbf m_v,\beta_v)\subset G(\mathbf 0)_v=\ga^n(\mathfrak o_v)$ 
for almost all~$v\in C_0(k)$
(Lemma~\ref{lemm.G0}).

For every $g\in G(F)\cap G(\mathbf m,\beta)$,
one has
\[ \deg_{C } \sigma_g^*(\mathscr D_\alpha)
  = \sum_{v\in C (k)}   m_{\alpha,v} , \]
  and
\[ \deg_{C } \sigma^*_g(\mathscr L_\alpha)
 = \sum_{v\in C (k)} \left( m_{\alpha,v} + e_{\alpha,\beta_v}\right). \]
Such a point~$g$ defines an integral point of~$\mathscr U(C_0)$
if and only if $(\mathbf m,\beta)$ is integral.

To shorten the notation, define, for every $v\in C(k)$,
every $\alpha\in\mathscr A$,
every $\mathbf m_v\in\N^{\mathscr A}$ and every $\beta_v\in\mathscr B_v$
such that $(\mathbf m_v,\beta_v)$ is $v$-integral,
\begin{equation}
\norm{\mathbf m_v,\beta_v}_\alpha= 
m_{\alpha,v} + 	  e_{\alpha,\beta_{\tcb{v}}} 
\qquad\text{and}\qquad
\mathbf T^{\norm{\mathbf m_v,\beta_v}} = \prod_{\alpha\in\mathscr A}
   T_\alpha^{\norm{\mathbf m_v,\beta_v}_\alpha}. \end{equation}
Similarly, for 
every $\mathbf m=(\mathbf m_v)_{v\in C }$ and $\beta=(\beta_v)$
such that $(\mathbf m,\beta)$ is integral, set
\begin{equation}
\norm{\mathbf m,\beta}_\alpha=
\sum_v \norm{\mathbf m_v,\beta_v}_\alpha 
\qquad \text{and}\qquad
\mathbf T^{\norm{\mathbf m,\beta}} = \prod_{\alpha\in\mathscr A}
   T_\alpha^{\norm{\mathbf m,\beta}_\alpha}. \end{equation}

For every subset~$W$ of $G(\AD_F)$
whose characteristic function is an adelic 
motivic Schwartz-Bruhat function,
such as the sets~$G(\mathbf m,\beta)$, the intersection $G(F)\cap W$
is represented by a constructible set~$[W]$ over~$k$.
Consequently,  one has the following adelic description
of the height zeta function $Z(\mathbf T)$ defined by~\eqref{eq.ZT2}:
\begin{equation}\label{eq.ZT.adelic}
Z(\mathbf T)
= \sum_{\text{$(\mathbf m,\beta)$ integral}}
  [G(\mathbf m,\beta)]
          \mathbf T^{\norm{\mathbf m,\beta}}.
\end{equation}

 
We shall prove our main theorem in the next section
by applying the motivic Poisson summation  formula
(Theorem~\ref{theo.poisson})
to each term $[G(\mathbf m,\beta)]$,
assuming the analysis of the local Fourier transforms
of the sets $G(\mathbf m_v,\beta_v)$ in $G(F_v)$.
This local analysis is postponed to Section~\ref{sec.local}
and will use computations of ``motivic oscillatory integrals''
which are the topic of Section~\ref{sec.motivic-osc}.

\section{Proof of the theorem}\label{s.proof}
 
\Subsection{Application of the motivic Poisson summation formula}

Let $W$ be any subset of $G(\AD_F)$ whose characteristic
function $\mathbf 1_W$ is an adelic Schwartz-Bruhat function.
The motivic Fourier transform of~$\mathbf 1_W$,
denoted $\mathscr F(\mathbf 1_{W}, \cdot) $
is also a Schwartz-Bruhat function  on the ``dual'' group~$G(\AD_F)$.
Using Hrushovski-Kazhdan's suggestive notation of 
``sum over $F$-rational points'', 
the motivic Poisson summation formula  (Theorem~\ref{theo.poisson})
is the equality
\begin{equation}
[W]=\sum_{x\in\ga^n(F)}\mathbf 1_W(x) = \Lef^{(1-g) n} 
 \sum_{\xi\in\ga^n(F)} \mathscr F(\mathbf 1_W, \xi). 
\end{equation}
(Recall that $g$ is the genus of~$C $.)
Recall also that when $W$ is of the form $\prod W_v$,
the Fourier transform $\mathscr F(\mathbf 1_W,\cdot)$
can be written as a product
of local Fourier transforms at all points~$v$ of~$C $, 
\[\mathscr F(\mathbf 1_{W},\cdot)
= \bigotimes_{v\in C } \mathscr F_v(\mathbf 1_{W_v}, \cdot) 
;\]
in this expression, almost all factors are equal to~$1$.

We apply this formula  to each of the adelic sets $G(\mathbf m,\beta)$,
where $(\mathbf m,\beta)$ is integral.
From Equation~\eqref{eq.ZT.adelic}, we thus get
\begin{align*}
Z(\mathbf T) 
& = \sum_{\text{$(\mathbf m,\beta)$ integral}}[G(\mathbf m,\beta)] 
  \mathbf T^{\norm{\mathbf m,\beta}} \\
& = 
\sum_ {\text{$(\mathbf m, \beta)$ integral}}
\sum_{x\in  G(F)} \mathbf 1_{G(\mathbf m,\beta)}(x)  
\mathbf T^{\norm{\mathbf m,\beta}} \\
& = \Lef^{(1-g) n} 
\sum_ {\text{$(\mathbf m, \beta)$ integral}}
 \sum_{\xi\in G(F)} \mathscr F(\mathbf 1_{G(\mathbf m,\beta)},\xi) \mathbf T^{\norm{\mathbf m,\beta}} .  
\end{align*} 
Let us define a Laurent series $Z(\mathbf T,\cdot)$ whose coefficients are adelic Schwartz-Bruhat function by the formula
\begin{equation}\label{eq.ZT-xi}
Z(\mathbf T,\xi) 
= \sum_ {\text{ $(\mathbf m, \beta)$ integral}}
\mathscr F(\mathbf 1_{G(\mathbf m,\beta)},\xi)
\mathbf T^{\norm{\mathbf m,\beta}} .
\end{equation}
With this notation, 
the height zeta function~\eqref{eq.ZT2} can be rewritten as
\begin{equation}\label{eq.ZT-poisson}
Z(\mathbf T)  =
\Lef^{(1-g) n} \sum_{\xi\in G(F)}  Z(\mathbf T,\xi).
\end{equation}
In this formula, ``summation over $F$-rational points'' of a Laurent series
has to be understood termwise. 

\Subsection{Restriction of the summation domain}

The following lemma shows that the coefficients
of the Laurent series $Z(\mathbf T,\xi)$  given by Equation~\eqref{eq.ZT-xi}
are ``uniformly'' adelic Schwartz-Bruhat  functions.

\begin{lemm}\label{lemm.xi-E}
There exists a finite dimensional $k$-vector space~$E$,
a \tcr{linear} $F$-morphism $\mathbf a\colon E_F\ra G_F$, and
a finite subset $\Sigma\subset C (k)$ containing~$S$
and satisfying the following
properties: for every integral $(\mathbf m,\beta)$ 
and every $\xi\in G(F)$,
\begin{itemize}
\item If $\xi\notin \mathbf a(E(k))$, then there exists
$v\in C $ such that $\mathscr F_v(1_{G(\mathbf m,\beta)},\xi)=0$;
\item If $\xi\in\mathbf a(E(k))$ and $v\not\in \Sigma$, then
$\mathscr F_v(1_{G(\mathbf m,\beta)},\xi)=1$.
\end{itemize}
\end{lemm}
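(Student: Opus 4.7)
The plan is to exploit the uniform control of the local Fourier transforms provided by Lemma~\ref{lemm.almost-all}, which ensures that at each $v$ the set $G(\mathbf m_v,\beta_v)$ is invariant under a translation subgroup whose radius $r_v$ depends only on~$v$ (not on $(\mathbf m_v,\beta_v)$) and can be taken to be~$0$ off a finite set of places. Combined with Lemma~\ref{lemm.G0}, this forces the local Fourier transforms at all but finitely many places to coincide with the self-dual function $\mathbf 1_{G(\mathfrak o_v)}$, and at the remaining places to be supported, uniformly in $(\mathbf m_v,\beta_v)$, in a lattice cut out by bounds on orders of vanishing.

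First I would choose a finite subset $\Sigma \subset C(k)$ enlarging $S$ to contain the complement (in~$C$) of the open set $C_1$ produced by Lemma~\ref{lemm.G0}, the support of $\div(\omega)$, and the finitely many $v$ where the invariance radius $r_v$ of Lemma~\ref{lemm.almost-all} is non-zero. For $v\notin\Sigma$ and any integral datum $(\mathbf m,\beta)$, the fibre of~$\pi$ over~$v$ has a unique component of multiplicity one, so $\beta_v$ is forced; the integrality condition gives $\mathbf m_v = 0$, and hence $G((\mathbf m_v,\beta_v)) = G(\mathbf 0)_v = G(\mathfrak o_v)$ by Lemma~\ref{lemm.G0}. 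Since $\omega$ has neither zero nor pole at~$v$, the conductor $\nu_v$ of~$r_v$ vanishes, and a direct computation (or Proposition~\ref{prop.local-vanishing}) yields $\mathscr F_v(\mathbf 1_{G(\mathfrak o_v)}) = \mathbf 1_{G(\mathfrak o_v)}$. This establishes the second assertion.

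To construct $E$, I would apply Lemma~\ref{lemm.almost-all} at each $v \in \Sigma$: the characteristic function $\mathbf 1_{G(\mathbf m_v,\beta_v)}$ lies in $\mathscr S(F_v^n ; M_v, r_v)$ for any integer $M_v$ with $G(\mathbf m_v,\beta_v) \subset (t_v^{M_v}\mathfrak o_v)^n$. Proposition~\ref{prop.local-vanishing} then places the support of its Fourier transform inside $\{\xi \in F_v^n : \ord_v(\xi_i) \geq \nu_v - r_v \text{ for all } i\}$, a bound uniform in $(\mathbf m_v,\beta_v)$. Choosing positive integers $c_v \geq r_v - \nu_v$ for each $v \in \Sigma$, setting $D = \sum_{v \in \Sigma} c_v [v]$, and defining $E = \mathscr L(D)^n$ as a finite-dimensional $k$-vector space gives the desired object, with $\mathbf a \colon E_F \to G$ the canonical $F$-linear embedding extending $\mathscr L(D)^n \hookrightarrow F^n = G(F)$.

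Finally, to verify the first assertion, suppose $\xi \in G(F)$ lies outside $\mathbf a(E(k)) = \mathscr L(D)^n$. Then some coordinate $\xi_i$ violates $\ord_v(\xi_i) \geq -\ord_v(D)$ at some place~$v$. If $v \notin \Sigma$, this reads $\ord_v(\xi_i) < 0$, so $\xi \notin G(\mathfrak o_v)$ and $\mathscr F_v(\mathbf 1_{G((\mathbf m_v,\beta_v))},\xi) = \mathbf 1_{G(\mathfrak o_v)}(\xi) = 0$; if $v \in \Sigma$, the bound $\ord_v(\xi_i) < -c_v \leq \nu_v - r_v$ places $\xi$ outside the support of the local Fourier transform identified above. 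The main obstacle is precisely this uniformity in $(\mathbf m_v,\beta_v)$ at places $v \in \Sigma$, where $\mathbf m_v$ is unbounded for $v \in S$; it is Lemma~\ref{lemm.almost-all} that supplies it, and without such a uniform invariance bound no single finite-dimensional $E$ could simultaneously control the Fourier transforms for all integral data.
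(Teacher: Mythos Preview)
Your proof is correct and follows essentially the same approach as the paper's own argument: both use the uniform invariance radius~$r_v$ from Lemma~\ref{lemm.almost-all} to bound the support of the local Fourier transforms, take~$E$ to be the Riemann--Roch space cut out by the resulting order conditions, and handle the second bullet by identifying $G(\mathbf m_v,\beta_v)=G(\mathfrak o_v)$ at good places via Lemma~\ref{lemm.G0} together with self-duality of~$\mathbf 1_{G(\mathfrak o_v)}$ when $\nu_v=0$. Your write-up is in fact slightly more explicit than the paper's (you spell out the case analysis for the first bullet and cite Proposition~\ref{prop.local-vanishing} directly), but there is no substantive difference in strategy.
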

\begin{proof}
With the notation from Lemma~\ref{lemm.almost-all},
there is, for every point~$v\in C (k)$,
an integer~$r_v$  such that 
the characteristic function of the definable
set $G(\mathbf m_v,\beta_v)$ in $G(F_v)$
is invariant under the action of the subgroup~$G(\mathfrak m_v^{r_v})$.
Consequently, its Fourier transform vanishes outside of the
orthogonal of this subgroup. 
Let $\sum a_v[v]$ be the divisor 
of the global differential form
in~$\Omega_{F/k}$ that has been used to define the global
Fourier transform. For almost all points~$v$, one has $a_v=0$.
Moreover, the orthogonal of $G(\mathfrak m_v^{r_v})$
contains $G(\mathfrak m_v^{-r_v+a_v})$. For every $v\in C (k)$,
set $s_v=-r_v+a_v$;
one has $s_v=0$ for all but finitely many points~$v\in C (k)$.
By the Riemann--Roch theorem, the space~$E$ of points~$\xi\in G(F)$
such that $\xi_v\in G(\mathfrak m_v^{s_v})$ for all~$v$
is a finite dimensional $k$-vector space. 
This proves the first part of the Lemma.

Moreover, for every $(\mathbf m,\beta)$ and every $v\in C_0(k)$
such that $\mathbf m_v=0$ and $\mathscr B_v$ is a singleton,
then
the subset $G(\mathbf m_v,\beta_v)$ of~$G(F_v)$
 identifies with $G(\mathfrak o_v)$; if, moreover,
$a_v=0$, then
the characteristic function of $G(\mathfrak o_v)$ is self-dual.
Up to enlarging the set~$\Sigma$, this implies the second assertion.
\end{proof}

This suggests to introduce, for every place~$v\in \Sigma$,
a Laurent series whose coefficients are 
motivic Schwartz-Bruhat functions  on $G(F_v)$ by
\begin{equation}\label{eq.ZvT}
 Z_v(\mathbf T,\cdot) = \sum_{\text{$(\mathbf m_v,\beta_v)$ integral}}
    \mathscr F_v(\mathbf 1_{G(\mathbf m_v,\beta_v)},\cdot) \mathbf T^{\norm{\mathbf m_v,\beta_v}}. 
\end{equation}
\tcr{By Lemma~\ref{lemm.xi-E}, one has 
$\mathscr F(1_{G(\mathbf m,\beta)},\xi)=0$
if $\xi\not\in \mathbf a(E(k))$,
while
$\mathscr F(1_{G(\mathbf m,\beta)},\xi)=\prod_{v\in\Sigma}
\mathscr F_v(1_{G(\mathbf m,\beta)},\xi)$ otherwise.
Consequently, one has
\begin{equation}\label{eq.Z(T)}
 Z(\mathbf T) = \Lef^{(1-g)n} \sum_{\xi\in  \mathbf a(E(k))}  \prod_{v\in \Sigma} Z_v(\mathbf T,\cdot) .
\end{equation}
}

\Subsection{Local results}
\label{ss.local-results}

In all of this section, we fix a point~$v\in\Sigma$
and state the properties of the Laurent series
$Z_v(\mathbf T,\cdot)$\tcr{, and of its specialization 
$Z_{\lambda,v}(T,\cdot)=Z((T^{\lambda_\alpha}),\cdot)$.}
\tcr{They will be proved in Section~\ref{sec.local}.
We fix a finite dimensional $k$-vector space~$E$
and a linear $F$-morphism $\mathbf a\colon E_F\ra G_F$
satisfying the conditions of Lemma~\ref{lemm.xi-E}.}
\tcr{Recall also that $\mathscr U$ is the good model of~$G_F$ over~$C$
of which we study the integral sections of bounded height.}

\begin{prop}\label{prop.almost}
Assume that $v\in \Sigma\cap C_0$.
Then $Z_v(\mathbf T,\cdot)$ is a polynomial in~$\mathbf T$.
\tcr{Moreover, $Z_{\lambda,v}(\Lef^{-1},0)$} is 
a \tcr{non-zero} effective element
of $\Mot_k$.
\end{prop}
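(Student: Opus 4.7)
The plan rests on the observation that, for $v\in C_0$, the $v$-integrality condition forces $\mathbf{m}_v=0$ and restricts $\beta_v$ to a finite subset of~$\mathscr{B}_v$. Consequently the sum~\eqref{eq.ZvT} defining $Z_v(\mathbf{T},\cdot)$ collapses to a finite sum of monomials $\mathscr{F}_v(\mathbf{1}_{G(0,\beta_v)},\cdot)\,\mathbf{T}^{(e_{\alpha,\beta_v})_\alpha}$, and is therefore a Laurent polynomial in the~$T_\alpha$; this settles the first assertion.

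For the second assertion I would specialize at $\xi=0$, where the Fourier transform collapses to the motivic volume
\[
\mathscr{F}_v(\mathbf{1}_{G(0,\beta_v)},0) = \int_{G(F_v)} \mathbf{1}_{G(0,\beta_v)}(x)\,\mathrm{d}x.
\]
Unwinding the definitions, $G(0,\beta_v)$ consists of those $\tilde{g}\in\mathscr{X}(\mathfrak{o}_v)$ whose generic fibre lies in~$G_F$, whose reduction meets $E_{\beta_v}$ but no other component~$E_{\beta'}$, and which avoid every~$\mathscr{D}_\alpha$. The condition $(g,E_{\beta_v})_v=1$ forces $\mu_{\beta_v}=1$, so the reductions land in the smooth weak N\'eron model~$\mathscr{X}_1$ of Lemma~\ref{lemm.weakNeron}, along the open stratum
\[
E_{\beta_v}^\circ := E_{\beta_v}\cap\mathscr{X}_1 \;\setminus\; \bigl(\textstyle\bigcup_{\beta'\neq\beta_v} E_{\beta'} \;\cup\; \bigcup_\alpha \mathscr{D}_\alpha\bigr).
\]
Using the normalization of~\S\ref{ss-sbfunctions}, an elementary change of variables on this smooth model (the trivial, transverse, single-component case of the Denef--Loeser formula) yields
\[
\mathscr{F}_v(\mathbf{1}_{G(0,\beta_v)},0) = [E_{\beta_v}^\circ]\,\Lef^{-n}.
\]

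Substituting $T_\alpha=\Lef^{-\lambda_\alpha}$ then produces the finite expression
\[
Z_{v,\lambda}(\Lef^{-1},0) = \sum_{\beta_v} [E_{\beta_v}^\circ]\,\Lef^{-n-\sum_\alpha \lambda_\alpha e_{\alpha,\beta_v}},
\]
in which every summand is effective in~$\Mot_k$ (a class of a $k$-variety multiplied by an integer power of~$\Lef$, which is a unit and is effective in the localized ring). For non-vanishing, an integral point of $\mathscr{U}(\mathfrak{o}_v)$ produces an admissible index~$\beta_v$ with $E_{\beta_v}^\circ\neq\emptyset$, hence a non-zero effective contribution; applying the Poincar\'e polynomial morphism $\PC$ and examining leading coefficients (which are strictly positive for each non-empty $E_{\beta_v}^\circ$ by the formula $\nu(X)t^{2\dim(X)}$ recalled in~\S\ref{ss.motivic}) rules out any cancellation between the finitely many strata. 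The main obstacle in this argument is the displayed volume formula: although standard, it requires identifying $G(0,\beta_v)$ with the fibre of the reduction map $\mathscr{X}_1(\mathfrak{o}_v)\to\mathscr{X}_1(k)$ over $E_{\beta_v}^\circ$ and tracking the normalization factor $\Lef^{-n}$ coming from the convention of~\S\ref{ss-sbfunctions}.
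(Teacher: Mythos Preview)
Your approach coincides with the paper's: reduce to the finitely many $v$-integral pairs $(\mathbf 0,\beta_v)$, compute the volume at $\xi=0$, and read off effectivity and non-vanishing.  There is, however, a slip in your displayed volume formula.  The Fourier transform is an integral against the Haar measure $\mathrm dg$ on $G(F_v)=F_v^n$, and the identification of $G(\mathbf 0,\beta_v)$ with the tube over $E_{\beta_v}^\circ$ inside the arc space of the weak N\'eron model does \emph{not} carry $\mathrm dg$ to the canonical motivic measure: one must insert the Jacobian factor $\Lef^{-\ord_{\omega_X}(x)}$ of Lemma~\ref{lemm.haar-motivic}.  On the stratum $\Omega(\emptyset,\beta_v)$ this factor is constant equal to $\Lef^{\rho_{\beta_v}}$ by~\eqref{eq.rho-beta}, so the correct identity is
\[
\mathscr F_v(\mathbf 1_{G(\mathbf 0,\beta_v)},0)=\Lef^{\rho_{\beta_v}}\,[E_{\beta_v}^\circ]\,\Lef^{-n},
\]
which is exactly what the paper obtains (writing $\Delta(\emptyset,\beta_v)$ for your $E_{\beta_v}^\circ$).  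This omission does no harm to your conclusions, since any power of~$\Lef$ is effective and invertible in~$\Mot_k$; your Poincar\'e-polynomial argument for non-cancellation among the strata goes through verbatim with the extra factor inserted, and is in fact more explicit than the paper's one-line claim of non-vanishing.
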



 The following result is a motivic analogue of  Proposition~4.6
of~\cite{chambert-loir-tschinkel2010}.
In that paper, 
some formalism of ``residue measures''
was introduced, which is useful  for  \emph{describing}
the kind of integrals that appear in the right hand side.
Observe indeed that this is a sum of motivic integrals
on arc spaces $\mathscr L_v(\mathscr D_A)$ attached to
the faces of dimension~$d$ of the analytic Clemens  complex
of~$(X,D)$ at the place~$v$.

\begin{prop}\label{prop.v.trivial}
Assume that $v\in C \setminus C_0$. Then the Laurent series
$Z_v(\mathbf T,0)$ is a rational function. 
More precisely, there exists a family $(P_{v,A})$ of Laurent polynomials with
coefficients in~${\Mot}_k$, a family $(u_{v,A})$
of motivic, integer valued functions, indexed by
the set of maximal faces~$A$ of the analytic Clemens
complex~$\Clan_v(X,D)$ such that
\[Z_v(\mathbf T,0) = \sum_{A\in\Clanmax_{v}(X,D)}
P_{v,A}(\mathbf T)
\prod_{\alpha\in A}
   \frac1{1-\Lef^{\rho_\alpha-1}T_\alpha} \]
and
\[ P_{v,A}(\mathbf T) \equiv  (1-\Lef^{-1})^{\Card(A)}
\int_{\mathscr L_v(\mathscr D_A)} \Lef^{u_{v,A}(x)} \,\mathrm dx \]
modulo the ideal generated by the polynomials
$1-\Lef^{\rho_\alpha-1}T_\alpha$, for $\alpha\in A$.
\end{prop}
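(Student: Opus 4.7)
The plan is to interpret $Z_v(\mathbf T,0)$ as a motivic integral over the space of sections $\mathscr X(\mathfrak o_v)$, and to compute it by stratifying according to the strict normal crossings divisor formed by the horizontal components $\mathscr D_\alpha$ and the vertical components $E_\beta$ on the good model~$\mathscr X$. At $\xi=0$, the Fourier transform $\mathscr F_v(\mathbf 1_{G(\mathbf m_v,\beta_v)},0)$ is simply the motivic volume of the bounded definable set $G(\mathbf m_v,\beta_v)\subset G(F_v)$. Since $\mathscr X$ is proper over~$C$, the valuative criterion identifies $G(F_v)$ (outside a set of measure zero) with the set of sections whose generic point lies in~$G_F$, and the identity $\norm{\mathbf m_v,\beta_v}_\alpha=(\sigma,\mathscr L_\alpha)_v=(\sigma,\mathscr D_\alpha)_v+\sum_\beta e_{\alpha,\beta}(\sigma,E_\beta)_v$ rewrites $Z_v(\mathbf T,0)$ as the motivic integral
\[
Z_v(\mathbf T,0)=\int_{\mathscr X(\mathfrak o_v)}\prod_{\alpha\in\mathscr A}T_\alpha^{(\sigma,\mathscr L_\alpha)_v}\,\mathrm d\mu(\sigma).
\]

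I would then partition the section space by the combinatorial data recording which boundary components are met. By the strict normal crossings hypothesis and the relation $\sum_\beta\mu_\beta(\sigma,E_\beta)_v=1$, each section meets exactly one vertical component~$E_\beta$, necessarily with $\mu_\beta=1$, and a well-defined subset $A\subset\mathscr A$ of horizontal components. The non-empty such subsets $A$ are precisely the faces of $\Clan_v(X,D)$ for which the stratum $\bigcap_{\alpha\in A}\mathscr D_\alpha\cap E_\beta$ contains a $k$-point. For each such pair $(A,\beta)$ I would compute the contribution in local étale coordinates in which the divisors~$\mathscr D_\alpha$ and~$E_\beta$ are coordinate hyperplanes: a section is parametrized by power series of prescribed orders $m_\alpha\geq 1$ along each~$\mathscr D_\alpha$, a unit multiple of~$t$ along~$E_\beta$, and arbitrary power series transversally. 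The motivic change of variables, encoded by the relation $-\div(\omega_X)=\sum_\alpha\rho_\alpha\mathscr D_\alpha+\sum_\beta\rho_\beta E_\beta$, produces a Jacobian factor~$\mathbf L^{(\rho_\alpha-1)m_\alpha}$ for each $\alpha\in A$. Summing over $(m_\alpha)_{\alpha\in A}\in(\N_{>0})^A$ yields geometric series
\[
\prod_{\alpha\in A}\sum_{m\geq 1}(1-\mathbf L^{-1})(\mathbf L^{\rho_\alpha-1}T_\alpha)^m=\prod_{\alpha\in A}\frac{(1-\mathbf L^{-1})\mathbf L^{\rho_\alpha-1}T_\alpha}{1-\mathbf L^{\rho_\alpha-1}T_\alpha},
\]
giving the announced rational form. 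The Laurent polynomial $P_{v,A}(\mathbf T)$ collects the remaining finite contributions: the monomials $T_\alpha^{e_{\alpha,\beta}}$ for $\alpha\notin A$ and motivic integrals over the residual strata $\bigcap_{\alpha\in A}\mathscr D_\alpha\cap E_\beta$, summed over admissible~$\beta$.

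For the identification of $P_{v,A}$ modulo the ideal generated by the polynomials $1-\mathbf L^{\rho_\alpha-1}T_\alpha$ ($\alpha\in A$), I would specialize $T_\alpha\mapsto\mathbf L^{1-\rho_\alpha}$ for every $\alpha\in A$: each geometric series collapses to~$(1-\mathbf L^{-1})$, jointly producing the factor~$(1-\mathbf L^{-1})^{\Card(A)}$, while the remaining specialized expression reassembles, via the gluing of the local coordinate patches along the components $E_\beta$ meeting $\mathscr D_A=\bigcap_{\alpha\in A}\mathscr D_\alpha$, into a motivic integral over the arc space $\mathscr L(\mathscr D_A)$ with integrand $\mathbf L^{u_{v,A}(x)}$, the function~$u_{v,A}$ recording the remaining exponents of~$\mathbf L$ (coming from the log-canonical coefficients and the residual non-$A$ contributions).

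The main obstacle is the careful bookkeeping in the third step: the various exponents of~$\mathbf L$ coming from the motivic Jacobian, the normalization of measures along arcs through vertical components of multiplicity~$\mu_\beta=1$, and the log-anticanonical coefficients $\rho_\alpha-1$, must line up so that the multi-dimensional partial sums factor as geometric series independently in each variable~$T_\alpha$. Once this is done, the identification of $P_{v,A}$ and its interpretation as an arc-space integral follow formally by partial-fraction decomposition and specialization.
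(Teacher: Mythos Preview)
Your proposal is correct and follows essentially the same approach as the paper: convert the Haar integral on $G(F_v)$ to a motivic integral on the arc space $\mathscr L(\mathscr X)$ via the weight $\Lef^{-\ord_{\omega_X}}$, stratify by pairs $(A,\beta)$ with $\beta\in\mathscr B_1$ using the local product structure coming from the strict normal crossings hypothesis, sum the resulting geometric series in the~$T_\alpha$, and then regroup contributions by maximal faces of the Clemens complex to define the Laurent polynomials~$P_{v,A}$. The only point where the paper is more explicit than your sketch is the identification of~$u_{v,A}$: it is the locally constant function on~$\mathscr L(\mathscr D_A)$ equal to $\rho_\beta+\sum_{\alpha\in\mathscr A}(1-\rho_\alpha)e_{\alpha,\beta}$ on the stratum indexed by~$\beta$, which is exactly the combination of exponents you allude to in your last two paragraphs.
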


\begin{coro}\label{coro.v.trivial}
Assume that $v\in C\setminus C_0$ and that $\lambda=(\rho_\alpha-1)_\alpha$.
Let $d_v=1+\dim\Clan_v(X,D)$.
The Laurent series
$Z_{\lambda,v}(T,0)$
in the variable~$T$ is a rational function.
More precisely, for every \tcr{non-zero} 
common multiple~$a$ of the \tcr{integers}~$\rho_\alpha-1$,
\tcr{for $\alpha\in\mathscr A$, then}
$P_{\lambda,v}(T) = (1-\Lef^a T^a)^{d_v}Z_{\lambda,v}(T,0)  $
belongs to $\tcb{{\Mot}}_k[T,T^{-1}]$
and satisfies
\[P_{\lambda,v} (\Lef^{-1}) = (1-\Lef^{-1})^{{d_v}} 
\sum_{\substack{A\in\Clanmax(X,D) \\ \Card(A)=d_v}} 
\prod_{\alpha\in A} \frac{a}{\rho_\alpha-1}
\int_{\mathscr L_v(\mathscr D_A)} \Lef^{u_{v,A}(x)}\,\mathrm dx. \]
\end{coro}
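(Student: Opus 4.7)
The approach is to specialize the multivariable formula of Proposition~\ref{prop.v.trivial} to the single variable $T$ via the substitution $T_\alpha = T^{\lambda_\alpha} = T^{\rho_\alpha - 1}$ and then evaluate at $T = \Lef^{-1}$. After this substitution each denominator factor $1 - \Lef^{\rho_\alpha - 1}T_\alpha$ becomes $1 - (\Lef T)^{\rho_\alpha - 1}$. Since $a$ is a common multiple of the integers $\rho_\alpha - 1$, this divides $1 - (\Lef T)^a$ with quotient
\[
\frac{1 - (\Lef T)^a}{1 - (\Lef T)^{\rho_\alpha - 1}} = \sum_{j=0}^{a/(\rho_\alpha - 1) - 1}(\Lef T)^{j(\rho_\alpha - 1)},
\]
which specializes to $a/(\rho_\alpha - 1)$ at $T = \Lef^{-1}$. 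Taking $d$ to be an upper bound for $\Card(A)$ as $A$ ranges over the faces of $\Clan_v(X,D)$, multiplication of the specialized $Z_{\lambda,v}(T,0)$ by $(1 - \Lef^a T^a)^d$ clears all denominators and shows $P_{\lambda, v}(T) \in \ExpMot_k[T, T^{-1}]$.

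Next, I would isolate the contributions surviving evaluation at $T = \Lef^{-1}$. The contribution of a face $A$ to $P_{\lambda, v}(T)$ equals
\[
P_{v,A}(T^\lambda) \cdot (1 - \Lef^a T^a)^{d - \Card(A)} \prod_{\alpha \in A}\sum_{j=0}^{a/(\rho_\alpha - 1) - 1}(\Lef T)^{j(\rho_\alpha - 1)}.
\]
At $T = \Lef^{-1}$ the factor $1 - \Lef^a T^a$ vanishes, so any non-maximal face (satisfying $\Card(A) < d$) contributes zero. For a maximal face $A$, the remaining product becomes $\prod_{\alpha \in A} a/(\rho_\alpha - 1)$, and only the leading polynomial factor $P_{v,A}(T^\lambda)\big|_{T = \Lef^{-1}}$ remains to be computed.

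Finally, for maximal $A$, at this specialization one has $\Lef^{\rho_\alpha - 1} T_\alpha = \Lef^{\rho_\alpha - 1} T^{\rho_\alpha - 1} = 1$ for every $\alpha \in A$, so the ideal generated by the polynomials $1 - \Lef^{\rho_\alpha - 1}T_\alpha$ lies in the kernel of the specialization. The congruence of Proposition~\ref{prop.v.trivial} therefore upgrades to an honest equality
\[
P_{v,A}(T^\lambda)\big|_{T = \Lef^{-1}} = (1 - \Lef^{-1})^{\Card(A)} \int_{\mathscr L(\mathscr D_A)} \Lef^{u_A(x)}\,\mathrm dx.
\]
Summing these contributions over maximal faces (on which $\Card(A)$ is constant) and multiplying by the combinatorial factors $\prod_{\alpha \in A} a/(\rho_\alpha - 1)$ produced above yields the stated formula. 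The only real subtlety is the bookkeeping in matching the exponent of $(1 - \Lef^a T^a)$ with $\Card(A)$ to identify the surviving faces; beyond Proposition~\ref{prop.v.trivial} no further analytic input is needed.
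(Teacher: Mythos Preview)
Your approach is correct and is exactly the natural derivation from Proposition~\ref{prop.v.trivial}; the paper itself does not spell out a proof for this corollary, treating it as an immediate specialization, and your argument is precisely what that specialization amounts to. One small terminological point: the faces that survive are those with $\Card(A)$ equal to the chosen exponent~$d$, i.e.\ the faces of \emph{maximal dimension} in $\Clan_v(X,D)$, which in general is a stronger condition than being a maximal face of the complex; your computation already uses the correct condition, only the phrasing ``maximal faces (on which $\Card(A)$ is constant)'' should be tightened to ``faces of maximal cardinality~$d$''.
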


\begin{prop}\label{prop.v.nontrivial}
Let $v\in C \setminus C_0$ and let $d_v=1+\dim\Clan_v(X,D)$.
There exists a constructible partition $(U_{v,i})$ of $E\setminus\{0\}$
and, for every $i$, 
an element $P_{v,i}\in\ExpMot_{U_{v,i}}[\mathbf T,\mathbf T^{-1}]$
and finite families $(a_{v,i,j})$, $(b_{v,i,j})$
where $a_{v,i,j}\in\N$, $b_{v,i,j}\in\N^{\mathscr A}$,
such that the restriction to $U_{v,i}$ of $ Z_v(\mathbf T,\mathbf a(\cdot)) $
equals
\[    \prod_j (1-\Lef^{a_{v,i,j}}\mathbf T^{b_{v,i,j}})^{\tcb{-1}} P_{v,i}(\mathbf T;\cdot). \]
Moreover, assuming that $\lambda=(\rho_\alpha-1)_\alpha$,
there exist integers~$a_{v,i}\geq 1$ and $d_{v,i}\in [0,d_v-1]$ such that  
the restriction to~$U_{i,v}$ of
$ (1-(\Lef T)^{a_{v,i}})^{d_{v,i}} Z_{\lambda,v} (T,\mathbf a(\cdot)) $
belongs to  $\ExpMot_{U_{v,i}}\{T\}^\dagger$. 
\end{prop}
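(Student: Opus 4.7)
The plan is to reduce the computation of $Z_v(\mathbf T,\xi)$ to a finite sum of \emph{motivic oscillatory integrals} attached to the stratification of $\mathscr X_v$ by the faces of the Clemens complex, and then to track how the stratum of maximal dimension gets killed by the oscillation when $\xi\neq 0$. Since $v\notin C_0$, there is no integrality constraint at~$v$ in the definition of $Z_v(\mathbf T,\cdot)$, so we must sum $\mathscr F_v(\mathbf 1_{G(\mathbf m_v,\beta_v)},\xi)\mathbf T^{\norm{\mathbf m_v,\beta_v}}$ over \emph{all} pairs $(\mathbf m_v,\beta_v)$. First I would use the strict normal crossings hypothesis to cover a neighbourhood of each $k$-point of the special fibre lying on a face $A\in\Clan_v(X,D)$ by étale local coordinates $(y_\alpha)_{\alpha\in A}\cup(z_\ell)$ adapted to~$\partial X$ and to the relevant vertical component~$E_{\beta_v}$. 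In such coordinates, the subset $G(\mathbf m_v,\beta_v)$ corresponds to prescribed valuations $\ord_v(y_\alpha)=m_\alpha$ together with open conditions on the $z_\ell$, and the local measure factorises as a product over $\alpha\in A$ of $(\Lef-1)\Lef^{-m_\alpha-1}$-contributions twisted by a motivic density on the transverse directions; this is the same set-up used for Proposition~\ref{prop.v.trivial}.

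Next I would express the phase $\psi\bigl(r_v(\langle x,\xi\rangle)\bigr)$ in these coordinates. The group coordinates $(x_1,\dots,x_n)$ on $G_F$ are rational functions of the $y_\alpha,z_\ell$, with poles exactly along the boundary divisors $\mathscr D_\alpha$; hence the inner product $\langle x,\xi\rangle$ has a Laurent expansion in $t_v$ whose relevant coefficient (the one contributing to the residue) is a piecewise polynomial in the $\xi$-coordinates and in the residual coefficients of the $y_\alpha^{-1}$ and of the $z_\ell$. The machinery of Section~\ref{sec.motivic-osc} supplies a cell decomposition of the $\xi$-space $E\setminus\{0\}$ into a finite definable partition $(U_{v,i})$ such that on each cell, for every admissible $(\mathbf m_v,\beta_v)$, the phase is affine in a prescribed subset of the residual coordinates, while remaining independent of the others. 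Lemma~\ref{lemm.vanish} then evaluates the integral: either the phase has a non-trivial linear part, in which case the contribution vanishes, or the phase is constant and we obtain a monomial in $\Lef^{\pm 1}$ times an $\ExpMot_{U_{v,i}}$-class, subject to finitely many linear conditions on $\mathbf m_v$ determined by the valuations of~$\xi$.

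Summing the surviving terms over all admissible $(\mathbf m_v,\beta_v)$ produces a finite sum of multi-indexed geometric series in $\mathbf T$, each evaluating to a ratio $P_{v,i}(\mathbf T;\xi)/\prod_j(1-\Lef^{a_{v,i,j}}\mathbf T^{b_{v,i,j}})$ with $a_{v,i,j}\in\N$ and $b_{v,i,j}\in\N^{\mathscr A}$; the $P_{v,i}$ belong to $\ExpMot_{U_{v,i}}[\mathbf T,\mathbf T^{-1}]$. This yields the first assertion. For the specialisation $\lambda=\rho'$, each factor $1-\Lef^{a}\mathbf T^{b}$ becomes $1-\Lef^{a}T^{\lambda\cdot b}$; by Lemma~\ref{lemm.resultants}, only those with $a=\lambda\cdot b$ contribute a pole at $T=\Lef^{-1}$, the others being invertible in $\ExpMot_{U_{v,i}}\{T\}^\dagger$.

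The main obstacle is the final inequality $d_{v,i}\leq d_v-1$. Geometrically this is the statement that for $\xi\neq 0$ the oscillation truly kills the top-dimensional boundary contribution. Concretely, one must show that for any maximal face $A\in\Clanmax_v(X,D)$ (of dimension $d_v-1$, so with $\Card(A)=d_v$), and for any $\xi\neq 0$, the phase $\langle x,\xi\rangle$ restricted to the part of $G(F_v)$ approaching the stratum $D_A=\bigcap_{\alpha\in A}D_\alpha$ has a non-trivial $\ga$-linear dependence on at least one residual coordinate. This in turn follows from the fact that the stabiliser in $G$ of a point of $D_A$ is a proper subgroup of $G$ (because the $G$-orbit through that point is open in $D_A$, of dimension $n-\Card(A)<n$, unless $A=\emptyset$), so that the restriction of the non-zero linear form $\xi$ to the complementary direction is non-zero; Lemma~\ref{lemm.vanish} then forces cancellation of the factor which would otherwise have contributed $(1-\Lef T)^{-d_v}$. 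Hence at least one of the maximal $(1-\Lef^{a}T^{a})$-factors is absent, giving $d_{v,i}\leq d_v-1$ as required.
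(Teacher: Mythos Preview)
Your overall strategy—stratify by faces of the Clemens complex, express the Fourier transform as a sum of motivic oscillatory integrals, and argue that oscillation kills one factor on each maximal face—is in the right spirit. However, the execution diverges from the paper at a crucial technical point, and your argument for the key inequality has a gap.

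The paper does \emph{not} work directly on~$\mathscr X$. Instead it first stratifies $E\setminus\{0\}$ and, on each stratum~$P_i$, constructs (via Lemma~3.4.1 of~\cite{chambert-loir-tschinkel:2012}) a composition of \emph{equivariant} blow-ups $\pi_i\colon\mathscr Y_i\to\mathscr X\times_k P_i$ along smooth $G$-invariant centres contained in the boundary, such that $\pi_i^*f_\xi$ extends to a regular morphism $\mathscr Y_{i,K}\to\P^1_K$. One then computes $Z_v(\mathbf T,\xi)$ as a motivic integral on $\mathscr L(\mathscr Y_i)$ via the change-of-variables formula and applies Proposition~\ref{prop.mIie-poles}: the poles are indexed by $\Clanmax(Y_i,\partial Y_i)_0$, the sub-complex supported on divisors along which $f_\xi$ has no pole. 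The comparison of this sub-complex with the full $\Clan_v(X,D)$—showing that the pole at $T=\Lef^{-1}$ strictly drops—is then imported from the arithmetic analysis in~\cite{chambert-loir-tschinkel:2012}, \S3.4, which controls the Jacobian discrepancies of the exceptional divisors against their pull-back multiplicities.

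Your attempt to bypass the blow-up runs into two problems. First, the assertion that the phase is ``affine in a prescribed subset of the residual coordinates'' is not justified on~$\mathscr X$ itself: the rational function $f_\xi$ has an indeterminacy locus (where its zero divisor $\overline{V(f_\xi)}$ meets the pole divisor $\sum d_\alpha D_\alpha$), and near such points the residual expansion of $r_v(f_\xi)$ is not of the simple shape you describe; resolving this is exactly what the blow-up accomplishes. Second, your stabiliser argument for the key claim is garbled: when $\Card(A)=n$ the stratum $D_A$ is zero-dimensional and $G$-fixed, so the stabiliser equals~$G$, not a proper subgroup. The claim that $f_\xi$ has a pole along some $D_\alpha$ with $\alpha\in A$ for every maximal face~$A$ is nonetheless \emph{true}, but for a different reason: if not, then (since $A$ is maximal, $D_A$ meets no other boundary component) $f_\xi$ would be regular in a neighbourhood of the projective variety~$D_A$, hence constant there; yet $f_\xi(g\cdot p)=f_\xi(p)+\langle g,\xi\rangle$ with $G$ preserving~$D_A$, forcing $\xi=0$. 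Even with this repair, you still need the blow-up machinery (and the discrepancy analysis from~\cite{chambert-loir-tschinkel:2012}) to convert the observation into an honest bound on denominators, because the integrals over strata meeting the indeterminacy locus do not factor as you assume.
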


For a moment, we take these three propositions 
for granted and complete the proof of Theorem~\ref{theo.main}.

\Subsection{Conclusion: Proof of Theorem~\ref{theo.main}}

\tcr{Recall from Equation~\eqref{eq.Z(T)}
that our goal is to evaluate the sum
\[  Z(\mathbf T) = \Lef^{(1-g)n} \sum_{\xi\in  \mathbf a(E(k))}  \prod_{v\in \Sigma} Z_v(\mathbf T,\cdot) . \]
}

For every $v\in C\setminus C_0$, let $d_v=1+\dim\Clan_v(X,D)$;
let also $d=\sum_{v\in C\setminus C_0} d_v$.
Propositions~\ref{prop.almost}, \ref{prop.v.trivial} 
and~\ref{prop.v.nontrivial}
show that for every $\xi\in\mathbf a(E(k))$, 
the Laurent series
$Z(\mathbf T,\xi)=\prod_{v\in C} Z_v(\mathbf T,\xi)$ 
with coefficients in~$\ExpMot_k$
is a rational function of~$\mathbf T$, and admits
a denominator of the form $\prod (1-\Lef^a \mathbf T^b)$.

We set
\[ \Clanmax_\infty (X,D) = \prod_{v\in C\setminus C_0}\Clanmax_v(X,D). \]
For $\xi=0$,
with the notation of Proposition~\ref{prop.v.trivial}, one has
\[ Z(\mathbf T,0) = \sum_{A=(A_v)\in \Clanmax_\infty(X,D)}
   \prod_{v\in C\setminus C_0} \prod_{\alpha\in A_v}\frac1{1-\Lef^{\rho_\alpha-1}T_\alpha}
 P_{v,A_v}(\mathbf T) \prod_{v\in C_0} Z_v(\mathbf T,0). \]
In particular, if $\lambda=(\rho_\alpha-1)_\alpha$, one has
\begin{align*}
 Z_\lambda(T,0) & = Z((T^{\rho_\alpha-1}),0) \\
&= \sum_{A=(A_v)\in \Clanmax_\infty(X,D) }
   \prod_{v\in C\setminus C_0} \prod_{\alpha\in A_v}\frac1{1-(\Lef T)^{\rho_\alpha-1}}
 P_{v,A_v}((T^{\rho_\alpha-1})) \prod_{v\in C_0} Z_v((T^{\rho_\alpha-1}),0) \\
&= \sum_{A=(A_v)\in \Clanmax_\infty(X,D) }
 P_{A}(T)
   \prod_{v\in C\setminus C_0} \prod_{\alpha\in A_v}\frac1{1-(\Lef T)^{\rho_\alpha-1}}
\end{align*}
where the polynomial~$P_A\in\Mot_k[T]$ is defined by 
\[ P_A(T) =  \prod_{v\in C\setminus C_0} P_{v,A_v}((T^{\rho_\alpha-1})) 
\prod_{v\in C_0} Z_v((T^{\rho_\alpha-1}),0). 
\]
Consequently, $Z_\lambda(T,0)$
is both a rational function, and an element of $\Mot_k\{T\}$; moreover,
\begin{multline*} (1-\Lef^aT^a)^d Z_\lambda(T,0) \\
 = \sum_{A=(A_v)\in\Clanmax_\infty(X,D)}
 P_{A}(T)
   \prod_{v\in C\setminus C_0} (1-(\Lef T)^a)^{d_v-\Card(A_v)}
   \prod_{\alpha\in A_v}  \frac{1-(\Lef T)^a}{1-(\Lef T)^{\rho_\alpha-1}}
.
\end{multline*}
The right hand side of the preceding formula is a polynomial in~$T$
with coefficients in~$\tcb{\Mot}_k$; when one sets $T=\Lef^{-1}$, 
only the terms remain for which $\Card(A_v)=d_v$ for every~$v$, and one gets
\begin{equation}
\label{eq.Z(L-1)}
\sum_{\substack{A=(A_v)\in\Clanmax_\infty(X,D)\\ \Card(A_v)=d_v}}
P_A(\Lef^{-1})
\prod_{v\in C\setminus C_0}\prod_{\alpha\in A_v}\frac{a}{\rho_\alpha-1}
. 
\end{equation}
It then follows from Propositions~\ref{prop.almost}
and~\ref{prop.v.trivial} 
that this is an effective element of~$\Mot_k$,
which is nonzero since, by assumption,
$\mathscr U(\mathfrak o_v)$ is non-empty for every $v\in C_0$.
In this case, one concludes that $Z_\lambda(T,0)$ 
has a pole of order exactly~$d$ at $T=\Lef^{-1}$.

For $\xi\neq 0$, one deduces in a similar way
from Proposition~\ref{prop.v.nontrivial}
that the Laurent series $Z(\mathbf T,\xi)$ is rational,
as well as its specializations.
By uniformity, the same property holds 
when one takes the sum over $F$-rational points,
so that the height zeta function $Z_\lambda(T)$
is a rational function which belongs to $\ExpMot_k\{T\}$.

Since $\Lef$ belongs to~${\Mot}_k$
and the natural map from~${\Mot}_k$ to~${\ExpMot}_k$ is injective
(Lemma~\ref{lemm.injective}),
the power series $Z(\mathbf T)$
is rational when viewed as a Laurent series with coefficients in~${\Mot}_k$.
In particular, the specialization $Z_\lambda(T)$ 
is a rational function too.

For every $\xi\neq 0$, the specialization
$Z_\lambda(T,\xi)=Z((T^{\rho_\alpha-1}),\xi)$
is a rational function, and an element of~$\ExpMot_k\{T\}$.
Moreover, Proposition~\ref{prop.v.nontrivial}
asserts that the order of its pole at $T=\Lef^{-1}$
is strictly smaller than~$d$.
Taking for the integer~$a$ any common multiple of the~$\rho_\alpha-1$
and of the integers~$a_{v,i}$ appearing in the statement of 
Proposition~\ref{prop.v.nontrivial}, 
and summing over rational points~$\xi\in G(F)$,
we obtain that
\[ (1-(\Lef T)^a)^d Z_\lambda(T) \in \ExpMot_k\{T\}^\dagger \]
and its value at $T=\Lef^{-1}$ is given by 
Equation~\eqref{eq.Z(L-1)}, multiplied by~$\Lef^{(1-g)n}$.

This concludes the proof of Theorem~\ref{theo.main}.

\section{Motivic oscillatory integrals}
\label{sec.motivic-osc}\label{s.oscill}

In this section, we consider a field~$k$ of characteristic
zero and let $K$ be the local field~$ k\lpar t\rpar$.
We write $\ord$ for the valuation of~$K$, normalized
by $\ord(t)=1$, $R$ for the valuation ring of $K$ \tcb{and $\mathfrak m$ for its maximal ideal}.
The angular component map~$\ac\colon K\to k$ is the
unique multiplicative map which is trivial on $1+k\lbra t\rbra$,
on~$t$, and maps constants $a\in k$ to themselves.
We also fix a real number $q > 1$ and set $\abs x = q^{- \ord (x)}$.

With the notation of Section~\ref{s.poisson},
let $r\colon K\ra k$ be the linear map, 
given by $r(1)=1$ for $n=0$ and $r(t^n)=0$ otherwise,
so that $r(a)=\res_0(a\,\mathrm dt/t)$.
Set $\mathrm e(\cdot)=\psi(r(\cdot))$; it is an analogue
of a non-trivial character of $R/\mathfrak m$.

\Subsection{Decay of motivic integrals}

\begin{lemm}\label{lem1}
Let $d$ be a positive integer 
and let $\xi\in K$ be such that $\abs\xi= 1$.
Then, for every $a\in K$ and every
$n  \in \mathbf{N}$ such that $\ord(a) +n\leq 0 <\ord(a)+2n$,
one has
\[
\int_{\xi + t^n R} \mathrm e (a x^d) \mathrm dx =0.
\]
\end{lemm}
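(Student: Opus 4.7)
The approach is a motivic non-archimedean stationary-phase argument: parameterize the ball $\xi + t^n R$ by $y \in R$ via $x = \xi + t^n y$, Taylor-expand the phase $ax^d$ to first order in~$y$, observe that the higher-order remainder is killed by~$r$, and invoke Lemma~\ref{lemm.linear} to integrate out the resulting non-trivial linear character.

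Concretely, under the substitution $x = \xi + t^n y$ the measure pulls back with a factor~$\Lef^{-n}$, and the binomial expansion gives
\[ ax^d = a\xi^d + ad\xi^{d-1}t^n y + \sum_{k=2}^{d} a\binom{d}{k}\xi^{d-k}t^{kn}y^k. \]
Since $y \in R$ and $\ord(a) + 2n > 0$ by hypothesis, each term with $k \geq 2$ has positive valuation, hence lies in~$tR$ and is annihilated by~$r$. Therefore $r(ax^d) = c + r(b y)$, where $c = r(a\xi^d) \in k$ and $b = a d\xi^{d-1} t^n$. The first summand is independent of~$y$, so the corresponding class $[\Spec k, c] = \mathrm{e}(a\xi^d) \in \ExpMot_k$ factors out of the integral, reducing the claim to $\int_R \mathrm{e}(by)\,\mathrm{d}y = 0$.

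For the latter, the integer $d$ is a unit in~$k$ (characteristic zero) and $\abs{\xi} = 1$, so $\ord(b) = \ord(a) + n \leq 0$ by hypothesis. At a truncation level $N \geq 1 - \ord(b)$ (at which $\mathrm{e}(b\,\cdot)$ descends to $R/t^N R$), the integral equals $\Lef^{-N}$ times the class of $(R/t^N R, r(b\,\cdot))$ in~$\ExpMot_k$. Writing $b = \sum_{j \geq \ord(b)} b_j t^j$ and $y = \sum_{i=0}^{N-1} y_i t^i$, one has $r(b y) = \sum_{i=0}^{-\ord(b)} b_{-i} y_i$, a $k$-linear form in the coordinates~$y_i$ whose coefficient $b_{\ord(b)}$ on $y_{-\ord(b)}$ is non-zero by definition of the valuation. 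Lemma~\ref{lemm.linear} then gives the vanishing of this class in~$\ExpMot_k$, whence the lemma. The argument is entirely routine; I foresee no substantive obstacle beyond the bookkeeping just described.
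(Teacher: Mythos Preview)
Your proof is correct and follows essentially the same route as the paper's: substitute into the ball, kill the quadratic and higher terms of the binomial expansion using $\ord(a)+2n>0$, factor out the constant phase, and reduce to the vanishing of $\int_R \mathrm e(by)\,\mathrm dy$ for $\ord(b)\leq 0$. The only cosmetic difference is that you unwind this last step explicitly via Lemma~\ref{lemm.linear}, whereas the paper invokes Proposition~\ref{prop.local-vanishing}.
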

\begin{proof}
We follow the arguments of Lemma~2.3.1 in~\cite{chambert-loir-tschinkel:2012}.
One can write
\[\int_{\xi+t^n R} \mathrm e(ax^d)\,\mathrm dx
= \Lef^{-n} \int_R \mathrm e(a \xi^d (1+t^n u)^d)\,\mathrm du.\]
For $u\in R$, 
all terms starting from the third one  in the binomial expansion
\[ a\xi^d (1+t^nu) ^d  = a\xi^d + \binom d1 a\xi^d t^n u +\binom d2 a\xi^dt^{2n}u^2+\dots
+\binom dd a\xi^d t^{dn}u^d \]
belong to~$\mathfrak m$,
since $\ord(a) > -2n$ and $\ord(\xi)=0$. Therefore
\[r(a \xi^d (1+t^nu)^d)=r(a \xi^d ) + d r(a \xi^d t^n u) \]
and
\[
\int_R \mathrm e(a \xi^d (1+t^n u)^d)\,\mathrm du
= [\Spec (k), r(a \xi^d )] \int_R \mathrm e (da \xi^d t^n u)\,\mathrm du. \]
Since 
$\ord(a\xi^dt^n)=\ord(a)+n\leq 0$, Proposition~\ref{prop.local-vanishing}
implies that
\[\int_R \mathrm e(da \xi^d t^n u)\,\mathrm du=0, \]
and the lemma follows.
\end{proof}

For $m\in\Z$, let $C_m$ be the annulus defined by $\ord(x)=m$. 
For $d\in\Z$, $d\neq0$, and $a\in K^*$, set
\begin{equation}
I (m, d, a)= \int_{C_m} \mathrm e (a x^d) \,\mathrm dx
\end{equation} 
in $\Mot_k$. 
\begin{lemm}\label{lem2}
The integrals $I(m,d,a)$ satisfy the following properties:
\begin{enumerate}\def\labelenumi{\upshape(\theenumi)}
\item
Let $m\in\Z$ and $d\in\Z$, $d\neq0$. Let $a,b\in K^*$ 
be such that 
$\ord(b)=\ord(a)+md$ and $\ac(b)=\ac(a)\pmod {(k^*)^d}$.
Then $I(m,d,a)=\Lef^{-m}I(0,d,b)$.
\item
Assume that $k$ is algebraically closed.
Then \[ I(m,d,a)=\Lef^{-m} I(0,d,t^{\ord(a)+md}).\]
 In particular,
$I (m, d, a)$ depends only on~$m$, $d$, and~$\ord(a)$.

\item
If $\ord(a)+md<0$, 
then $I (m, d, a) = 0$.

\item 
If $\ord(a) + md>0$, then $I(m,d,a)=\Lef^{-m}(\Lef-1)/\Lef$.
\end{enumerate}
\end{lemm}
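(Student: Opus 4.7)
The plan is to establish the four items in sequence, each reducing (via substitution or by direct evaluation) to an application of Lemma~\ref{lem1} or a trivial computation.

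For~(1), I would perform the substitution $x = t^m u$ with $u$ ranging over $C_0$; this is a measure-preserving bijection of $C_0$ onto $C_m$ up to the Jacobian factor $\Lef^{-m}$, giving
\[
I(m,d,a) \;=\; \Lef^{-m}\int_{C_0}\mathrm e(at^{md}u^d)\,\mathrm du \;=\; \Lef^{-m}\,I(0,d,at^{md}).
\]
Since $\ord(b)=\ord(at^{md})$, the ratio $w=b/(at^{md})$ lies in $R^\times$, and $\ac(w)=\ac(b)/\ac(a)$ is, by hypothesis, a $d$-th power in $k^\times$. Hensel's lemma (applicable since $k$ has characteristic zero) then produces $v\in R^\times$ with $v^d=w$, and the substitution $u\mapsto vu$---which preserves $C_0$ and the motivic measure---carries $\mathrm e(at^{md}u^d)$ to $\mathrm e(bu^d)$, yielding $I(0,d,at^{md})=I(0,d,b)$.

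Part~(2) is then immediate: over an algebraically closed $k$, one has $(k^\times)^d=k^\times$, so the congruence condition in~(1) is vacuous, and one may take $b=t^{\ord(a)+md}$, whence $I(m,d,a)$ only depends on $m$, $d$, and $\ord(a)$.

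For~(3), using~(1) I reduce to showing $I(0,d,a')=0$ when $a'=at^{md}$ has $\ord(a')<0$. I would partition $C_0$ into the cosets $\xi+t^nR$ parametrized by $\xi$ in the $k$-variety $C_0/t^nR$, choosing $n$ in the window $(-\ord(a')/2,\,-\ord(a')]$; for instance $n=-\ord(a')$ works, since then $\ord(a')+n=0\leq 0<-\ord(a')=\ord(a')+2n$. Lemma~\ref{lem1} then makes each coset integral vanish, and additivity of the motivic integral gives $I(0,d,a')=0$.

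For~(4), the same reduction yields $I(m,d,a)=\Lef^{-m}\,I(0,d,a')$ with $\ord(a')>0$; then $a'x^d\in\mathfrak m$ for every $x\in R$, so $r(a'x^d)=0$ and the integrand $\mathrm e(a'x^d)$ is identically $1$. Thus $I(0,d,a')=\int_{C_0}\mathrm dx = 1-\Lef^{-1}$, giving the stated value $\Lef^{-m}(\Lef-1)/\Lef$. The main subtle point lies in~(3): one must check that the vanishing in Lemma~\ref{lem1} is not merely pointwise in $\xi$ but holds as an identity in $\ExpMot_k$ uniformly over the parameter variety $C_0/t^nR$. Inspecting the proof of Lemma~\ref{lem1}, however, the vanishing factors through Proposition~\ref{prop.local-vanishing}, which is formulated at the level of motivic functions and applies without modification in this parametric setting.
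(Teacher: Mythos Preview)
Your proof is correct and follows essentially the same approach as the paper's: the same multiplicative change of variables for~(1), the same choice $n=-\ord(a')$ and appeal to Lemma~\ref{lem1} for~(3), and the same triviality of the integrand for~(4). Your explicit remark on uniformity in~(3) (that the vanishing from Lemma~\ref{lem1} holds at the level of motivic functions over the parameter variety $C_0/t^nR$) is a useful clarification that the paper leaves implicit in its double-integral formulation.
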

\begin{proof}
(1) Let $u\in k^*$ be such that $\ac(a)u^d=\ac(b)$.
By assumption, there exists $v_1\in 1+t k\lbra t\rbra$ such that
$b=au^dt^{md} v_1$; since $k$ has characteristic zero,
there exists $v\in 1+t k\lbra t\rbra$ such that $v_1=v^d$.
Let us make the change of variables $\tcb{x}=uvt^m\tcb{y}$. This gives
\begin{align*}
 I(m,d,a) & =\int_{C_m} \mathrm e(ax^d)\,\mathrm dx
= \Lef^{-m} \int_{C_0} \mathrm e(au^dv^dt^{md} y^d)\,\mathrm dy \\
&= \Lef^{-m} \int_{C_0} \mathrm e(by^d)\,\mathrm dy
= I(0,d,b). \end{align*}

Assertion~(2) follows at once.

Let us prove (3). Since $I(m,d,a)=\Lef^{-m} I(0,d,at^{md})$
we only need to prove that $I(0,d,a)=0$ for $\ord(a) < 0$.
Let $n=-\ord(a)$.
Observe that
\[ I(0,d,a)= \int_{C_0} \mathrm e(ax^d)\,\mathrm dx
= \int_{C_0} \int_{R} {\mathrm e(a(x+t^ny)^d)}\,\mathrm dy\,\mathrm dx
= 0. \]
Since $\ord(a)<0$, $\ord(a)+2n=-\ord(a)>0$,
hence by Lemma~\ref{lem1},
$\int_{x+t^n R} \mathrm e(ay^d)\mathrm dy=0$
for every $x\in K$ such that $\ord(x)=0$.
The statement follows.


(4) It suffices to prove that $I(0,d,a)=(\Lef-1)/\Lef$ for $\ord(a)> 0$.
In this case, one has $r(ax^d)=0$ for every $x\in R^*$,
hence the claim.
\end{proof}

Let $u\in K\lpar x\rpar$ be a Laurent series of positive
radius of convergence; write $u=\sum u_n x^n$.
Let $\mu\in\Z$ be such that $u$ converges on the closed disk~$D_\mu$ defined by 
the inequality $\ord(x)\geq \mu$ deprived from~$0$; in other words, 
$\mu$ is such that $\ord(u_n)+n\mu\to+\infty$ when $n\to+\infty$. 
Let $m$ be an integer such that $m\geq\mu$;
let $\nu\geq 0$
be such that $\ord(u_n) +nm>0 $ for $n>\nu$. By construction,
$ \ord(u_n x^n)=\ord(u_n)+n\ord(x)  >0$ for $n>\nu$ and $\ord(x)=m$,
so that $r(u(x))= r(u^{\nu}(x))$, for $x\in D_\mu$
such that $\ord(x)=m$, where $u^\nu(x)=\sum_{n\leq\nu} u_nx^n$.
Therefore, for $m\geq\mu$, we can define the motivic integrals
$ \int_{C_m} \mathrm e(u(x))\, \mathrm dx $
as given by 
$ \int_{C_m} \mathrm e(u^\nu(x))\,\mathrm dx$
in~$\Mot_k$.
More generally, for every definable subset~$W$ of~$D_\mu$,
one can define $\int_W \mathrm e(u(x))\,\mathrm dx$ 
as an element of a suitable completion of~$\Mot_k$,
and as an element of~$\Mot_k$ itself if $\ord(x)$ is bounded
from above on~$W$.

\begin{prop}\label{prop.annulus-0}
Let $u\in K\lpar x\rpar$ be a Laurent series of positive
radius of convergence; let $d=-\ord_x(u)$ and $a=\lim_{x\to 0}u(x)x^{d}$. 
Assume that $d>0$.
The motivic integrals
\[ \int_{C_m} \mathrm e(u(x))\,\mathrm dx \]
vanish for every large enough integer~$m$; more 
precisely, it suffices that \tcb{$u$ converges
and has no root in the punctured disk defined by $\ord(x)\geq m$},
and that $\ord(a)<md$.
\end{prop}
\begin{proof}
Since $K$ has characteristic zero, there exists $a\in K^*$
and a power series $v\in K\lbra x\rbra$ such that $v(0)=1$
and $u(x)=a x^{-d} v(x)^{-d}$.
Let $m_0\in\N$ be a large enough integer such that
$x^du$ converges on the disk $\{\ord(x)\geq m_0\}$ and 
does not vanish on this disk.
If one writes $u=\sum u_n x^n$, we thus have the following properties:
\begin{itemize}
\item one has $u_{-d}=a$ and $u_n=0$ for $n<-d$;
\item for $n>{-d}$,  $\ord(u_n)+nm_0>\ord(a) $;
\item when $n\to+\infty$, one has $\ord(u_n)+nm_0\to +\infty$.
\end{itemize}
Writing $v=\sum_{n\geq0} v_n x^n$,
it follows that $\ord(v_n)+nm_0>0$ for every $n\in\N_{>0}$.
Consequently, the change of variables $y=xv(x)$ 
maps the annuli $C_m$ to themselves, for $m\geq m_0$,
and preserves the motivic measure.
Therefore, for $m\geq m_0$, one has
\[ \int_{C_m} \mathrm e(u(x))\,\mathrm dx
 = \int_{C_m} \mathrm e(a x^{-d}  v(x)^{-d}) \,\mathrm dx 
=\int_{C_m} \mathrm e(a y^{-d})\,\mathrm dy. \]
According to Lemma~\ref{lem2}, (3),
this integral vanishes if $\ord(a)-md<0$. This concludes 
the proof of the proposition.
\end{proof}

\Subsection{Motivic Igusa integrals with exponentials---the regular case}

\paragraph{Setup}\label{setup-mIie}
Let $\mathscr X$ be a flat $R$-scheme of finite type,
equidimensional of relative dimension~$n$,
let $\mathscr D$ be a relative divisor on~$\mathscr X$.
We assume that $\mathscr X$ is smooth, everywhere of relative
dimension~$n$, and that $\mathscr D$ has strict normal crossings \tcb{over $R$}.
Let also $X=\mathscr X_k$ and $D=\mathscr D_k$
be their special fibers. 
Let $\mathscr A$ be the set of irreducible components of~$\mathscr D$; 
for $\alpha\in\mathscr A$,
let $\mathscr D_\alpha $ be the corresponding irreducible component,
and let $D_\alpha$ be its special fiber.
For every $A\subset\mathscr A$, let $\mathscr D_A=\bigcap_{\alpha\in A}\mathscr D_\alpha$  and let $\mathscr D_A^\circ = \mathscr D_A\setminus \bigcup_{\alpha\notin A}\mathscr D_\alpha$; one defines $D_A$ and $D_A^\circ$ in a similar way.
By definition of a divisor with normal crossings, 
every irreducible component of~$D_A$ has codimension~$\Card(A)$.

For every constructible subset~$W$ of~$X$, 
let $\mathscr L(\mathscr X;W)$
be the constructible subset of~$\mathscr L(\mathscr X)$  parameterizing arcs
$x\in\mathscr X(R)$ \tcb{whose origin lies in} $W$. 
For every $\mathbf m\in \N^{\mathscr A}$, we write $W(\mathbf m)$
for the constructible subset of $\mathscr L(\mathscr X)$ consisting
of arcs~$x$ such that $\ord_{\mathscr D_\alpha}(x)= m_\alpha$
for every $\alpha\in \mathscr A$.

Let $h$ be a motivic residual function on~$\mathscr L(\mathscr X)$.
Let $f$ be a meromorphic function on~$\mathscr X$ 
such that the polar divisor $\div_\infty(f_K)$
of the restriction~$f_K$ to~$\mathscr X_K$ is contained in the union 
$\bigcup_{\alpha\in \mathscr A} \mathscr D_{\alpha,K}$.
Let $(d_\alpha)_{\alpha\in\mathscr A}$ be nonnegative integers
such that  on~$\mathscr X_K$,
\begin{equation}
 \div_\infty(f_K) = \sum_{\alpha\in\mathscr A} d_\alpha \mathscr D_{\alpha,K}.
\end{equation}

For a family $\mathbf T=(T_\alpha)_{\alpha\in\mathscr A}$ of indeterminates, define
the \emph{motivic Igusa integral with exponentials}
\begin{equation}\label{eq.motivic-Igusa}
 Z(\mathscr X,h\mathrm e(f);\mathbf T)  = 
\int_{\mathscr L(\mathscr X)}  \prod_{\alpha\in\mathscr A} T_\alpha^{\ord_{\mathscr D_\alpha}(x)} h(x)\,\mathrm e(f(x))\,\mathrm dx ,\end{equation}
a power series in~$\mathbf T$ with coefficients in ${\ExpMot}_k$. 
Although $f$ is only a rational function on~$\mathscr X$,
note that $r(f)$  is a well defined residual function on~$W(\mathbf m)$ 
for each $\mathbf m\in\N^{\mathscr A}$, so that we have
\begin{equation} 
 Z(\mathscr X,h\mathrm e(f);\mathbf T)  = 
  \sum_{\mathbf m\in\N^{\mathscr A}}
  \prod_{\alpha\in \mathscr A} T_\alpha^{m_\alpha}
  \int_{W(\mathbf m)} 
   h(x) \mathrm e(f(x))\,\mathrm dx.
\end{equation}
This power series is an analogue of the classical motivic Igusa zeta integrals
which would correspond to the case $f=0$.

More generally, for every subset $A\subset\mathscr A$, let
\begin{equation}
 Z_A(\mathscr X,h\mathrm e(f);\mathbf T)  = 
\int_{\mathscr L(\mathscr X;D_A^\circ)}  \prod_{\alpha\in\mathscr A} T_\alpha^{\ord_{\mathscr D_\alpha}(x)} h(x)\,\mathrm e(f(x))\,\mathrm dx .\end{equation}
When $A$ runs among~$\Clan(X,D)$, the subsets $\mathscr L(\mathscr X;D_A^\circ)$
form a partition of~$\mathscr L(\mathscr X)$ into constructible subsets
and we decompose the motivic integral defining~$Z(X,h\mathrm e(f);\mathbf T)$
as the sum of  motivic integrals over each of them,
so that
\[ Z(\mathscr X,h\mathrm e(f);\mathbf T) = \sum_{A\subset\mathscr A}
 Z_A(\mathscr X,h\mathrm e(f);\mathbf T)  .\] 
For every $\mathbf m\in\N_{>0}^A$, let  $W_A(\mathbf m)$  be the 
constructible subset of~$\mathscr L(\mathscr X;D_A^\circ)$ 
defined by the conditions
$\ord_{\mathscr D_\alpha}(x)=m_\alpha$ for $\alpha\in A$
and $\ord_{\mathscr D_\alpha}(x)=0$ for $\alpha\not\in A$.
With this notation, one has
\[ 
 Z_A(\mathscr X,h\mathrm e(f);\mathbf T)  = 
  \sum_{\mathbf m\in\N_{>0}^{A}} \prod_{\alpha\in A} T_\alpha^{m_\alpha}
  \int_{W_A(\mathbf m)} h(x) \mathrm e(f(x))\,\mathrm dx.\]


\begin{lemm}\label{lemm.hensel}
Let $A$ be a subset of~$\mathscr A$ and let $B$ be a set of
cardinality equal to $n-\Card(A)$.
There exists a measure-preserving definable isomorphism~$\theta$
from $D_A^\circ\times \mathscr L(\Aff^1;0)^A\times \mathscr L(\Aff^1)^{B} $,
with coordinates $x_\alpha$ (for $\alpha\in A$) and $y_\beta$
(for $\beta\in B$),
to $\mathscr L(\mathscr X;D_A^\circ)$ 
such that $\ord_{\mathscr D_\alpha}(\theta(x))=\ord(x_\alpha)$
for $\alpha\in A$, and $\ord_{\mathscr D_\alpha}(\theta(x))=0$ for $\alpha\notin A$.
\end{lemm}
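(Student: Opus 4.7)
The plan is to reduce the claim to the affine model $\Aff^n$ via étale charts adapted to the normal crossings divisor, and then lift to arc spaces using Hensel's lemma. First, the smoothness of $\mathscr X$ together with the strict normal crossings hypothesis ensures that near each point $p \in D_A^\circ$ one can choose a regular system of parameters $(x_\alpha)_{\alpha\in A}\cup(y_\beta)_{\beta\in B}$ of $\mathscr O_{\mathscr X,p}$, with $|B|=n-|A|$ and each $x_\alpha$ a local equation of $\mathscr D_\alpha$. These spread out to regular functions on a Zariski open neighborhood $\mathscr V$ of $p$, on which the resulting morphism $\phi=(x_\alpha,y_\beta)\colon \mathscr V\to \Aff^n$ is étale at every point of $D_A^\circ\cap\mathscr V$ (after shrinking $\mathscr V$ if necessary). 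By quasi-compactness, finitely many such charts cover a neighborhood of $D_A^\circ$, and a subordinate definable partition of $D_A^\circ$ reduces the problem to a single chart.

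The key tool is then the arc-space form of Hensel's lemma: for any étale morphism $\phi\colon\mathscr V\to\mathscr W$ of smooth $k$-schemes of finite type, the canonical map $\mathscr L(\mathscr V)\to\mathscr V\times_{\mathscr W}\mathscr L(\mathscr W)$ is an isomorphism. Applying this to $\phi\colon\mathscr V\to\Aff^n$ and restricting to arcs whose initial point lies in $D_A^\circ\cap\mathscr V$ yields
\[ \mathscr L(\mathscr V;D_A^\circ\cap\mathscr V)\simeq (D_A^\circ\cap\mathscr V)\times_{\Aff^B}\mathscr L(\Aff^n;\{0\}^A\times\Aff^B), \]
the fiber product being taken over the étale map $D_A^\circ\cap\mathscr V\to\Aff^B$ induced by $(y_\beta)$ and the initial-value projection on the right. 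On the affine side one has the evident coordinate decomposition $\mathscr L(\Aff^n;\{0\}^A\times\Aff^B)=\mathscr L(\Aff^1;0)^A\times\mathscr L(\Aff^1)^B$, and the stated target product is obtained by rewriting the fiber product with the help of the splitting $\mathscr L(\Aff^1)\simeq\Aff^1\times\mathscr L(\Aff^1;0)$ (initial value plus tail). The required identities $\ord_{\mathscr D_\alpha}(x)=\ord(x_\alpha)$ for $\alpha\in A$ and $\ord_{\mathscr D_\alpha}(x)=0$ for $\alpha\notin A$ are then immediate from the construction, since $x_\alpha$ is a local equation of $\mathscr D_\alpha$, while the initial point of any arc starting in $D_A^\circ$ avoids $\mathscr D_\alpha$ for $\alpha\notin A$.

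The main technical point is the definable gluing step: the étale charts $\mathscr V$ are not canonical, so one has to verify that the local isomorphisms, after partitioning $D_A^\circ$ along the cover, combine into a single globally definable isomorphism. This reduces to the intrinsicness of the functions $\ord_{\mathscr D_\alpha}$ on the arc space and the compatibility of the chart-wise isomorphisms on overlaps; such patching is routine in the category of definable sets, even though no global algebraic chart need exist.
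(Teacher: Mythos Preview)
Your proof is correct and follows essentially the same route as the paper: choose local \'etale charts adapted to the strict normal crossings divisor, invoke the Hensel/\'etale lifting property for arc spaces, and patch definably over a finite cover of $D_A^\circ$. The paper's argument is slightly terser (it passes directly to the product rather than through the fiber product you write), and one small slip in your write-up is that $\mathscr V$ and $\mathscr W$ are $R$-schemes, not $k$-schemes---but the Hensel statement you use holds equally well in that setting.
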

\begin{proof}
This is a standard fact in the theory of motivic zeta functions.
We may assume that $\mathscr D_\alpha=\emptyset$ for $\alpha\not\in A$,
and that there exist regular functions $u_\alpha$
(for $\alpha\in A$) on~$\mathscr X$ such that $\div(u_\alpha)
=\mathscr D_\alpha$.
By definition of a divisor with strict normal crossings,
the morphism $u=(u_\alpha)\colon \mathscr X\to (\Aff^1)^A$
is then smooth.
Hence we may assume that there exists regular functions
$v_\beta$ (for $\beta\in B$) in~$\mathscr X$
such that the morphism $(u,v)=((u_\alpha);(v_\beta))$
from $\mathscr X$ to~$(\Aff^1)^A\times (\Aff^1)^B$
is étale.  
Both of these assumptions  are only valid up
to replacing~$\mathscr X$ by a Zariski dense open subset
containing any given point of the special fibre.
Since we only seek for a definable isomorphism, they do
not restrict the generality.

It then follows from the definition of an étale morphism
that the induced morphism
$\mathscr L(\mathscr X;D_A^\circ)\to 
D_A^\circ\times \mathscr L(\Aff^1;0)^A\times\mathscr L(\Aff^1)^B$
is an isomorphism. 
It preserves the motivic measure by construction of  latter.
Moreover, denoting the standard coordinates on
$\mathscr L(\Aff^1;0)$ by $x_\alpha$, for $\alpha\in A$, this
isomorphism maps the definable function $\ord_{\mathscr D_\alpha}$
to the function $\ord(x_\alpha)$.
\end{proof}

In this section and the next one, we study the rationality 
and the poles of the Igusa integral with exponentials.
We first consider the special case where $f$ is regular
on the generic fiber~$\mathscr X_K$.

\begin{prop}\label{prop.ZA-rational}
Assume that $f_K$ is regular on the generic fiber~$\mathscr X_K$.
Let $A$ be a subset of~$\mathscr A$.
The power series $Z_A(\mathscr X,h\mathrm e(f);\mathbf T)$ 
is a rational function.
More precisely, there exists a  polynomial $Q_A$
with coefficients in~$\ExpMot_k$, 
such that
\[ Z_A(\mathscr X,h\mathrm e(f);\mathbf T) =  Q_A(\mathbf T)
 \prod_{\alpha\in A} 
 \frac{1}{1-\Lef^{-1}T_\alpha}  \]
and such that
\begin{equation}
 Q_A(\mathbf T)- 
 (1-\Lef^{-1})^{\Card(A)}
 \int_{\mathscr L(\mathscr D_A^\circ)} h(x)\mathrm e(f(x))\,\mathrm dx
\end{equation}
belongs to the ideal generated by the polynomials $1-\Lef^{-1}T_\alpha$,
for $\alpha\in A$.
\end{prop}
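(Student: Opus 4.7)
The plan is to stratify $\mathscr L(\mathscr X; D_A^\circ)$ by the multi-order along $\mathscr D_A$ and, on each stratum, reduce to an integral over $\mathscr L(\mathscr D_A^\circ)$ by exploiting that both $h$ and $f$ are regular along arcs of large contact with $\mathscr D_A^\circ$. By Lemma~\ref{lemm.hensel}, use coordinates $(\bar p, x_\alpha, y_\beta)\in D_A^\circ \times \mathscr L(\Aff^1;0)^A\times \mathscr L(\Aff^1)^B$ with $\ord_{\mathscr D_\alpha}(x)=\ord(x_\alpha)$, so that $\mathscr L(\mathscr D_A^\circ)$ is the slice $\{x_\alpha=0:\alpha\in A\}$, equal to $D_A^\circ\times \mathscr L(\Aff^1)^B$. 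Write $Z_A=\sum_{m\in\N_{>0}^A}T^m\phi(m)$ with $\phi(m)=\int_{W_A(m)}h\,\mathrm e(f)\,dx$, and parametrize $W_A(m)$ by $(u_\alpha)\in (R^*)^A$ via $x_\alpha=t^{m_\alpha}u_\alpha$; the motivic measure transforms as $\prod_\alpha \Lef^{-m_\alpha}\,du_\alpha$.

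The technical heart is the following stabilization: there exists $N\in\N$ such that, for every $B\subseteq A$ and every $m\in\N_{>0}^A$ with $\min_{\alpha\in B}m_\alpha>N$, one has in $\ExpMot$
\[ h(\bar p, t^m u, y)\,\mathrm e(f(\bar p, t^m u, y)) = h(\bar p, (t^{m_\alpha}u_\alpha)_{A\setminus B}, 0_B, y)\,\mathrm e(f(\bar p, (t^{m_\alpha}u_\alpha)_{A\setminus B}, 0_B, y)). \]
For $h$ this is immediate from its definition as a motivic residual function: $h$ is represented at some jet level $\mathscr X(N')$, and for $m_\alpha>N'$ the $N'$-jet of $x_\alpha=t^{m_\alpha}u_\alpha$ vanishes. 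For $\mathrm e(f)$, use that $f$ is regular on $\mathscr X_K$, hence has no pole along the horizontal divisor $\mathscr D_A$; locally in the Hensel coordinates write $f=g/t^{N''}$ with $g$ regular, and Taylor-expand $g$ in the variables $(x_\alpha)_{\alpha\in B}$: the discrepancy $g(\bar p, t^m u, y)-g(\bar p, (t^{m_\alpha}u_\alpha)_{A\setminus B}, 0_B, y)$ is a sum of terms of $t$-order at least $\min_{\alpha\in B}m_\alpha$, so after dividing by $t^{N''}$ its constant coefficient vanishes once $\min_{\alpha\in B}m_\alpha>N''$; applying $r$ and then $\mathrm e$ gives the equality. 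Take $N=\max(N',N'')$.

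Given the stabilization, partition $\N_{>0}^A=\bigsqcup_{B\subseteq A}S_B$ where $S_B=\{m:m_\alpha>N\Leftrightarrow \alpha\in B\}$. For $m\in S_B$, integrating out $(u_\alpha)_{\alpha\in B}\in(R^*)^B$ (of measure $(1-\Lef^{-1})^{\Card(B)}$) factors the stabilized integrand and yields $\phi(m)=(1-\Lef^{-1})^{\Card(B)}\prod_{\alpha\in B}\Lef^{-m_\alpha}\cdot J_B(m_{A\setminus B})$, with $J_B(m_{A\setminus B})\in\ExpMot_k$ a motivic integral over $D_A^\circ\times\prod_{\alpha\in A\setminus B}C_{m_\alpha}\times\mathscr L(\Aff^1)^B$. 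Summing geometrically over $m_\alpha>N$ for $\alpha\in B$ and over the finite range $m_\alpha\in\{1,\dots,N\}$ for $\alpha\in A\setminus B$ yields
\[ Z_A=\sum_{B\subseteq A}P_B(T_{A\setminus B})\,(1-\Lef^{-1})^{\Card(B)}\prod_{\alpha\in B}\frac{(T_\alpha/\Lef)^{N+1}}{1-\Lef^{-1}T_\alpha} \]
with $P_B\in\ExpMot_k[(T_\alpha)_{\alpha\in A\setminus B}]$. Multiplication by $\prod_{\alpha\in A}(1-\Lef^{-1}T_\alpha)$ clears the denominators and produces the required polynomial $Q_A(T)$. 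Modulo the ideal $(1-\Lef^{-1}T_\alpha)_{\alpha\in A}$, every summand with $B\neq A$ contains a factor $1-\Lef^{-1}T_\alpha$ with $\alpha\in A\setminus B$, so only the $B=A$ term survives; in it $P_A=\int_{\mathscr L(\mathscr D_A^\circ)}h\,\mathrm e(f)\,dx$ and $\prod_{\alpha\in A}(T_\alpha/\Lef)^{N+1}\equiv 1$, giving the asserted congruence. The main obstacle is the stabilization of $\mathrm e(f)$: it requires using the regularity of $f$ on $\mathscr X_K$ (equivalently, absence of poles along the horizontal divisor $\mathscr D_A$) to control the $t$-adic convergence of $f$ along arcs approaching $\mathscr L(\mathscr D_A^\circ)$, while the vertical poles are absorbed by taking $m_\alpha$ large enough.
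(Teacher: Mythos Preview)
Your proof is correct and follows essentially the same approach as the paper: pass to Hensel coordinates via Lemma~\ref{lemm.hensel}, establish a stabilization threshold~$N$ (the paper calls it~$\mu$) using that $h$ factors through a finite jet level and that $f=t^{-N''}g$ with $g$ regular so that the residue of the discrepancy vanishes once $\min_{\alpha\in B} m_\alpha>N''$, stratify $\N_{>0}^A$ according to which coordinates exceed the threshold, and sum the resulting geometric series. The only cosmetic issue is that you use the letter~$B$ both for the set of ``extra'' coordinates in Lemma~\ref{lemm.hensel} and for the variable subset of~$A$ in the stratification $S_B$; renaming one of them would avoid the apparent clash in your description of $J_B$.
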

\begin{proof}
By Lemma~\ref{lemm.hensel}, there is a
measure-preserving definable isomorphism
from $\mathscr L(\mathscr X;D_A^\circ)$ 
to $D_A^\circ\times\mathscr L(\Aff^1;0)^A\times
\mathscr L(\Aff^1)^{B} $,  where $B$ is some set of cardinality
$n-\Card(A)$,  with coordinates $x_\alpha$ (for $\alpha\in A$), $y_\beta$
(for $\beta\in B$)
under which $\ord_{\mathscr D_\alpha}(x)=\ord(x_\alpha)$
for $\alpha\in A$, and $\ord_{\mathscr D_\alpha}(x)=0$ for $\alpha\notin A$.
In the sequel, we identify a point $x\in\mathscr L(\mathscr X;D_A^\circ)$
with a triple $(\xi,x,y)$, where $\xi\in D_A^\circ$,
$x\in\mathscr L(\Aff^1;0)^A$ and $y\in\mathscr L(\Aff^1)^B$.

Fix an integer~$a$ and a regular function~$g$ on~$\mathscr X$ 
such that $f=t^a g$.
On $\mathscr L(\mathscr X;D_A^\circ)$, we can expand the function~$g$
as a power series 
\[ g_A (x,y)= \sum_{\substack{p\in\N^A\\ q\in\N^B}} g_{p,q} x^p y^q,\]
where $g_{p,q}\in \mathscr O(D_A^\circ)\lbra t\rbra $.
Then
\[ \ord(g_A(x,y)-g_A(0,y))\geq \min_{\alpha\in A}\ord(x_\alpha). \]
In particular, we see that $r(t^a g_A(x,y))=r(t^a g_A(0,y))$
if $a+\min_{\alpha\in A}\ord(x_\alpha)>0$.
Let $\mu$ be a positive integer such that  $\mu > -a$
and such that the Schwartz-Bruhat function~$h$ 
factors through $\mathscr L_\mu(\mathscr X)$.

If one has $m_\alpha\geq\mu$ for every $\alpha\in A$, 
it then follows that
\begin{align*}
\int_{W_A(\mathbf m)} h(x)\mathrm e(f(x))\,\mathrm dx &=
\big(\prod_{\alpha\in A} \Lef^{-m_\alpha}\big)
    \int_{\substack{\ord(x'_\alpha)=0 \\ x_\alpha=t^{m_\alpha} x'_\alpha }}
          h(\xi,x,y)\mathrm e(t^a g_A(x,y))\,\mathrm dx'\mathrm dy \\
&=  \big(\prod_{\alpha\in A} \Lef^{-m_\alpha}\big)
       \int_{\substack{\ord(x'_\alpha)=0 \\ x_\alpha=t^{m_\alpha}x'_\alpha}}
          h(\xi,0,y)\mathrm e(t^ag_A(0,y))\,\mathrm dx' \mathrm dy\\
& = \prod_{\alpha\in A} \big(\Lef^{-m_\alpha} (1-\Lef^{-1})\big)
   \int_{ D_A^\circ\times \mathscr L(\Aff^1)^B} 
          h(\xi,0,y)\mathrm e(t^ag_A(0,y))\, \mathrm dy\\
& = \prod_{\alpha\in A} \big(\Lef^{-m_\alpha} (1-\Lef^{-1})\big)
 \int_{\mathscr L(\mathscr D_A;D_A^\circ)} h(x)\mathrm e(f(x))\,\mathrm dx.
\end{align*}
In general, let 
\[ A_1=\{\alpha\in A\sozat m_\alpha<\mu\} 
\quad\text{and}\quad
A_2=\{\alpha\in A\sozat m_\alpha\geq\mu\} , \]
so that $A$ is the disjoint union $A=A_1\cup A_2$.
Write $x=(x_1,x_2)$, where $x_1=(x_\alpha)_{\alpha\in A_1}$
and $x_2=(x_\alpha)_{\alpha\in A_2}$,
and split $\mathbf m=(\mathbf m_1,\mathbf m_2)$ accordingly.
Analogously, one has 
\[
\int_{W_A(\mathbf m)} h(x)\mathrm e(f(x))\,\mathrm dx  
= \prod_{\alpha\in A_2} \big(\Lef^{-m_\alpha} (1-\Lef^{-1})\big)
 \int_{W'_{A_2}(\mathbf m_1)}  h(x)\mathrm e(f(x))\,\mathrm dx,
\]
where $W'_{A_2}(\mathbf m_1)$ is the definable
subset of~$\mathscr L(\mathscr D_{A_2})$ consisting
of arcs $x$ on~$\mathscr D_{A_2}$ \tcb{with origin on~$D_{A_2}$ and}
such that $\ord_{\mathscr D_\alpha}(x)=m_\alpha$
for $\alpha\in A_1$.
We can then write
\begin{multline*}
Z_A(\mathscr X,h\mathrm e(f);\mathbf T)  
= \sum_{\mathbf m\in\N_{>0}^A} \prod_{\alpha\in A}
  T_\alpha^{m_\alpha} \int_{W_A(\mathbf m)} h(x) \mathrm e(f(x))\,\mathrm dx \\
= \sum_{\substack{A_1\subset A \\ A_2=A\setminus A_1}}
\sum_{\mathbf m_1\in (0,\mu)^{A_1}}
 \prod_{\alpha\in A_1} T_\alpha^{m_\alpha}
\sum_{\mathbf m_2\in [\mu,\infty)^{A_2}}
  \prod_{\alpha\in A_2} (1-\Lef^{-1}) ( \Lef^{-1}T_\alpha)^{m_\alpha}
\int_{W'_{A_2}(\mathbf m_1)} \hskip-1cm h(x)\mathrm e(f(x))\,\mathrm dx \\
 = \sum_{\substack{A_1\subset A \\ A_2=A\setminus A_1}}
\sum_{\mathbf m_1\in (0,\mu)^{A_1}}
\left(\int_{W'_{A_2}(\mathbf m_1)} \hskip-1cm h(x)\mathrm e(f(x))\,\mathrm dx \right)
 \prod_{\alpha\in A_1} T_\alpha^{m_\alpha}
 \prod_{\alpha\in A_2}  \frac{ (1-\Lef^{-1})(\Lef^{-1}T_\alpha)^\mu}{1-\Lef^{-1}T_\alpha}.
\end{multline*}

It follows from this computation that the power series $Q_A(\mathbf T)$ defined by
\[ Q_A(\mathbf T) =  Z_A(\mathscr X,h\mathrm e(f);\mathbf T) \prod_{\alpha\in A} (1-\Lef^{-1}T_\alpha) \]
is in fact a polynomial, 
which establishes the first assertion of the proposition.
Moreover, if we compute $Q_A(\mathbf T)$ modulo the ideal generated by the polynomials $1-\Lef^{-1}T_\alpha$ for $\alpha\in A$, only the term corresponding to
$A_1=\emptyset$ and $A_2=A$ survives in the sum. In this
case, $W'_{A_2}(\mathbf m_1)=\mathscr L(\mathscr D_A^\circ)$, and
we get
\begin{align*}
Q_A(\mathbf T) & \equiv 
\left( \int_{\mathscr L(\mathscr D_A^\circ)}
h(x)\mathrm e(f(x))\,\mathrm dx\right)
\prod_{\alpha\in A} (1-\Lef^{-1}) (\Lef^{-1}T_\alpha)^\mu \\
& \equiv 
(1-\Lef^{-1})^{\Card(A)} \left( \int_{\mathscr L(\mathscr D_A^\circ)}
h(x)\mathrm e(f(x))\,\mathrm dx\right),
\end{align*}
as claimed.
\end{proof}
\begin{coro}\label{coro.Z-rational}
Assume that $f_K$ is a regular function on the generic fiber~$\mathscr X_K$.
There exists a family $(P_A)$ of polynomials
with coefficients in~$\ExpMot_k$, 
indexed by $\Clanmax(X,D)$,
such that 
\begin{equation}
 Z(\mathscr X,h\mathrm e(f);\mathbf T) = \sum_{A\in\Clanmax(X,D)} P_A(\mathbf T) \prod_{\alpha\in A} \frac{1}{1-\Lef^{-1}T_\alpha}  \end{equation}
and such that for each~$A\in\Clanmax(X,D)$, 
\begin{equation}
 P_A(\mathbf T)- 
 (1-\Lef^{-1})^{\Card(A)}
 \int_{\mathscr L(\mathscr D_A)} h(x)\mathrm e(f(x))\,\mathrm dx
\end{equation}
belongs to the ideal generated by the polynomials $1-\Lef^{-1}T_\alpha$,
for $\alpha\in A$.
\end{coro}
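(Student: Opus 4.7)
The plan is to deduce Corollary~\ref{coro.Z-rational} from Proposition~\ref{prop.ZA-rational} by applying it summand-by-summand to the decomposition $Z(\mathscr{X}, h\mathrm e(f); T) = \sum_{A \subset \mathscr{A}} Z_A(\mathscr{X}, h\mathrm e(f); T)$ recorded just before that proposition, and then regrouping the resulting partial-fraction expression so that its denominators are indexed by the maximal faces of the analytic Clemens complex.

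First I would apply Proposition~\ref{prop.ZA-rational} to each $A \subset \mathscr{A}$, obtaining
\[ Z(\mathscr{X}, h\mathrm e(f); T) = \sum_{A \subset \mathscr{A}} \frac{Q_A(T)}{\prod_{\alpha \in A}(1 - \Lef^{-1}T_\alpha)}, \]
where each $Q_A$ is a polynomial with coefficients in $\ExpMot_k$ satisfying the congruence of that proposition; only terms with $A \in \Clan(X, D)$ can contribute nontrivially, since $Z_A$ vanishes as soon as the stratum $\mathscr{D}_A^\circ$ has no point. Next, for each $A \in \Clan(X, D)$ I would choose once and for all a maximal face $\sigma(A) \in \Clanmax(X, D)$ containing $A$, and set
\[ P_{A'}(T) = \sum_{\substack{A \in \Clan(X,D) \\ \sigma(A) = A'}} Q_A(T) \prod_{\alpha \in A' \setminus A}(1 - \Lef^{-1}T_\alpha) \]
for each $A' \in \Clanmax(X, D)$. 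Bringing these terms over the common denominator $\prod_{\alpha \in A'}(1-\Lef^{-1}T_\alpha)$ for each $A'$ reproduces the rational expression displayed in the corollary.

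To verify the congruence, I would reduce $P_{A'}(T)$ modulo the ideal $I_{A'}$ generated by the polynomials $(1 - \Lef^{-1}T_\alpha)$ for $\alpha \in A'$. For any $A \subset A'$ with $A \neq A'$, the factor $\prod_{\alpha \in A' \setminus A}(1 - \Lef^{-1}T_\alpha)$ lies in $I_{A'}$, so the only surviving contribution is the one indexed by $A = A'$, which is present because $\sigma(A') = A'$ is a permissible choice. By Proposition~\ref{prop.ZA-rational}, this residue equals $(1 - \Lef^{-1})^{\Card(A')} \int_{\mathscr{L}(\mathscr{D}_{A'}^\circ)} h(x)\,\mathrm e(f(x)) \,\mathrm dx$ modulo $I_{A'}$. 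Finally, since $A'$ is maximal in the analytic Clemens complex, the complement $\mathscr{D}_{A'} \setminus \mathscr{D}_{A'}^\circ = \bigcup_{\alpha \notin A'} \mathscr{D}_{A' \cup \{\alpha\}}$ consists of closed subschemes of strictly lower dimension in the smooth variety $\mathscr{D}_{A'}$, hence defines a subset of motivic measure zero in $\mathscr{L}(\mathscr{D}_{A'})$, and the integral may be taken over the full arc space $\mathscr{L}(\mathscr{D}_{A'})$ without change.

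The hard part is really just this final measure-zero replacement, together with justifying that contributions from $A \notin \Clan(X,D)$ can be safely discarded; the rest of the argument is formal partial-fraction bookkeeping built on top of Proposition~\ref{prop.ZA-rational}.
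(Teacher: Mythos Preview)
Your approach is essentially identical to the paper's: decompose $Z$ into the $Z_A$, apply Proposition~\ref{prop.ZA-rational} to each summand, assign to every contributing~$A$ a maximal face $\sigma(A)\in\Clanmax(X,D)$ containing it, and regroup over the common denominators. The paper does exactly this.

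The gap is in your final step. The set you need to discard is the set of arcs in $\mathscr L(\mathscr D_{A'})$ that are \emph{based at} the closed subset $D_{A'}\setminus D_{A'}^\circ$ of the special fibre. This is a level-$0$ cylinder set, so its motivic measure equals $[D_{A'}\setminus D_{A'}^\circ]\,\Lef^{-\dim D_{A'}}$, which is not zero merely because the base has lower dimension. (What \emph{does} have measure zero is the set of arcs \emph{lying entirely inside} a lower-dimensional closed subscheme; that is a different and, here, irrelevant statement.) The correct justification, which is the one the paper gives, is that for $A'$ maximal in the \emph{analytic} Clemens complex over an algebraically closed field~$k$, every intersection $D_{A'}\cap D_\alpha$ with $\alpha\notin A'$ has no $k$-point and is therefore empty as a $k$-scheme; hence $D_{A'}^\circ=D_{A'}$ on the special fibre and the two arc spaces literally coincide. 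No measure-theoretic argument is needed, and once you replace your last sentence by this observation the proof is complete.
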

\begin{proof}
By definition, one has
\[
 Z(\mathscr X, h\mathrm e(f);\mathbf T)
 = \int_{\mathscr L(\mathscr X)}  \prod_{\alpha\in\mathscr A} T_\alpha^{\ord_{\mathscr D_\alpha}(x)} h(x)\,\mathrm e(f(x))\,\mathrm dx  
 = \sum_{A\subset \mathscr A} 
Z_A(\mathscr X,h\mathrm e(f);\mathbf T).
\]
For each $A\subset\mathscr A$ such that $D_A^\circ(k)\neq\emptyset$,
choose a maximal subset $A'\in\Clanmax(X,D)$ such that $A\subset A'$.
In the previous formula for~$Z(\mathscr X,h\mathrm e(f);\mathbf T)$,
we can collect terms according to the chosen maximal subset.
Applying Proposition~\ref{prop.ZA-rational}, we obtain
\begin{align*}
 Z(\mathscr X,h\mathrm e(f);\mathbf T) 
& = \sum_{A\in\Clanmax(X,D)} 
       \sum_{\substack{B\subset A \\ B'=A}} 
             Q_B(\mathbf T) \big( \prod_{\alpha\in B} \frac{1}{1-\Lef^{-1}T_\alpha}\big)\\
&  = \sum_{A\in\Clanmax(X,D)} \prod_{\alpha\in A} \frac1{1-\Lef^{-1}T_\alpha}
       \sum_{\substack{B\subset A \\ B'=A}} Q_B(\mathbf T) \prod_{\alpha\in A\setminus B} (1-\Lef^{-1}T_\alpha).
\end{align*}
For every $A\in\Clanmax(X,D)$, we set
\[ P_A(\mathbf T)  = 
       \sum_{\substack{B\subset A \\ B'=A}} Q_B(\mathbf T) \prod_{\alpha\in A\setminus B} (1-\Lef^{-1}T_\alpha).\]
Then we have
\[ Z(\mathscr X,h\mathrm e(f);\mathbf T) 
 = \sum_{A\in\Clanmax(X,D)} P_A(\mathbf T) \prod_{\alpha\in A} \frac1{1-\Lef^{-1}T_\alpha}. \]
Moreover, modulo the ideal generated by the polynomials $1-\Lef^{-1}T_\alpha$
(for $\alpha\in A$), $P_A(\mathbf T)$ is congruent  to $Q_A(\mathbf T)$,
which is itself congruent to
\[ (1-\Lef^{-1})^{\Card(A)}
    \int_{\mathscr L(\mathscr D_A^\circ)}  h(x) \mathrm e(f(x))\,\mathrm dx. \]
This proves the corollary 
since $\mathscr D_A^\circ=\mathscr D_A$ for $A\in\Clanmax(X,D)$.
\end{proof}

\Subsection{Motivic Igusa integrals with exponentials---the general case}
\label{ss.mIie}
We keep the setup and notation as described
in Section~\ref{setup-mIie}.

In the previous section, we assumed that $f$ was regular on~$\mathscr X_K$.
In the general case where,
on~$\mathscr X_K$, 
the polar divisor $\div_\infty(f)$ of~$f$ is contained in the union 
$\bigcup_{\alpha\in \mathscr A} \mathscr D_{\alpha,K}$,
we shall prove in Proposition~\ref{coro.mIie-rational}
that the motivic Igusa  integral with exponentials 
is a rational function.
Under the additional condition that $f_K$ extends to a regular
morphism from~$\mathscr X_K$ to~$\P^1_K$, we have the following more
precise result.

\begin{prop}\label{prop.mIie-poles}
Assume that $f_K$ extends to a regular map from~$\mathscr X_K$ to~$\P^1_K$.
Let $\Clan(X,D)_0$ be the subcomplex of $\Clan(X,D)$
where we only keep vertices $\alpha\in\mathscr A$
such that $d_\alpha=0$.
Then there is a family~$(P_A)$ of polynomials   
\tcr{with coefficients in~$\ExpMot_k$,}
indexed by the set $\Clanmax(X,D)_0$ of maximal faces of $\Clan(X,D)_0$,
such that
\begin{equation}
Z(\mathscr X,h\mathrm e(f);\mathbf T) = \sum_{A\in\Clanmax(X,D)_0} P_A(\mathbf T) \prod_{\alpha\in A}
\frac1{1-\Lef^{-1}T_\alpha}.
\end{equation}
\end{prop}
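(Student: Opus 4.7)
My plan is to adapt the stratified decomposition used in the proof of Corollary~\ref{coro.Z-rational}. Writing
\[ Z(\mathscr X,h\mathrm e(f);T) = \sum_{A\subset\mathscr A} Z_A(\mathscr X,h\mathrm e(f);T), \]
with $Z_A$ the integral over $\mathscr L(\mathscr X;D_A^\circ)$, the goal is to establish the dichotomy: if $A$ is a face of $\Clan(X,D)_0$, i.e., $d_\alpha=0$ for every $\alpha\in A$, then $Z_A$ is a rational function of the form $Q_A(T)\prod_{\alpha\in A}(1-\Lef^{-1}T_\alpha)^{-1}$; otherwise, $Z_A$ is a polynomial in $T$. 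Once this is shown, collecting terms under maximal faces of $\Clan(X,D)_0$ (in the same way as in the proof of Corollary~\ref{coro.Z-rational}) yields the asserted expression. The first half of the dichotomy is immediate: if $d_\alpha=0$ for all $\alpha\in A$, the polar divisor of $f_K$ does not meet $D_A^\circ$, so $f$ is regular in a Zariski neighborhood of the generic fibre of $D_A^\circ$ and the proof of Proposition~\ref{prop.ZA-rational} applies verbatim.

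The bulk of the work lies in the second case. Fix $A$ containing some $\alpha_0$ with $d_{\alpha_0}>0$ and set $A^+=\{\alpha\in A\sozat d_\alpha>0\}$. Using Lemma~\ref{lemm.hensel} to introduce local coordinates $(x_\alpha)_{\alpha\in A}$, $(y_\beta)_{\beta\in B}$, and using the assumption that $f_K$ extends to a regular morphism $\mathscr X_K\to\mathbf P^1_K$ (so that $1/f$ is regular along $\mathscr D_{\alpha,K}$ for $\alpha\in A^+$), one can write locally
\[ f(x,y) = u(x,y) \prod_{\alpha\in A^+} x_\alpha^{-d_\alpha}, \]
where $u$ is a unit. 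After the change of variables $x_\alpha=t^{m_\alpha}x'_\alpha$ (with $\ord(x'_\alpha)=0$) on the stratum $W_A(m)$, the phase $f$ acquires a leading factor $t^{-\sum_{\alpha\in A^+}m_\alpha d_\alpha}$, of arbitrarily negative valuation as $m_{\alpha_0}\to\infty$. A variant of Lemma~\ref{lem1}, in which the monomial $ax^d$ is replaced by $a/x^d$ (with the same binomial expansion using $(1+t^nv)^{-d_{\alpha_0}}$ in place of $(1+t^nv)^d$), then shows that partitioning the domain of $x'_{\alpha_0}$ into cosets modulo $t^nR$ for a suitable $n$, the integral over each coset vanishes once $m_{\alpha_0}d_{\alpha_0}$ exceeds a threshold depending only on the level of $h$ and the regularity of $u$. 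Running this argument for each $\alpha\in A^+$ bounds the support of the sum defining $Z_A$ in each variable $T_\alpha$ ($\alpha\in A^+$), while the $T_\alpha$ for $\alpha\in A\setminus A^+$ are controlled exactly as in Proposition~\ref{prop.ZA-rational}. Hence $Z_A$ is a polynomial.

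The main obstacle is the non-stationary phase argument just sketched, since the unit $u(x,y)$ genuinely depends on $x'_{\alpha_0}$. One must verify that, for sufficiently large $m_{\alpha_0}$ and after passing to cosets modulo a suitable power of $t$, replacing $u$ by its value at $x_{\alpha_0}=0$ is harmless for the residue computation: this reduces, as in the proof of Lemma~\ref{lem1}, to showing that the linear part of the Taylor expansion of the phase dominates once the remainder lies in $\mathfrak m$, and that the linear coefficient has sufficiently negative valuation to invoke Proposition~\ref{prop.local-vanishing}. With that verification in hand, the assembly of the rational contributions from faces of $\Clan(X,D)_0$ into a sum indexed by $\Clanmax(X,D)_0$, by the same elementary regrouping used in Corollary~\ref{coro.Z-rational}, completes the proof.
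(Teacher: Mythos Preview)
Your overall strategy matches the paper's: decompose $Z$ over strata $Z_A$, show the oscillatory mechanism kills large $m_\alpha$ whenever $d_\alpha>0$, and regroup under maximal faces of $\Clan(X,D)_0$. However, your stated dichotomy is wrong, and this breaks the final regrouping as you describe it.

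The claim ``otherwise, $Z_A$ is a polynomial in~$T$'' is false for mixed strata. If $A$ contains both indices $\alpha$ with $d_\alpha>0$ (your $A^+$) and indices with $d_\alpha=0$ (call this $A_0=A\setminus A^+$), then the oscillatory argument only truncates the sum in the variables $m_\alpha$ for $\alpha\in A^+$; the remaining variables $m_\alpha$ for $\alpha\in A_0$ still run over all of $\N_{>0}$ and produce geometric-series factors exactly as in Proposition~\ref{prop.ZA-rational}. You actually say this yourself (``the $T_\alpha$ for $\alpha\in A\setminus A^+$ are controlled exactly as in Proposition~\ref{prop.ZA-rational}''), but that sentence contradicts your conclusion ``Hence $Z_A$ is a polynomial.'' The correct statement, and what the paper proves, is that
\[
Z_A(\mathscr X,h\mathrm e(f);T)\prod_{\alpha\in A_0}(1-\Lef^{-1}T_\alpha)
\]
is a polynomial, i.e.\ $Z_A$ retains poles precisely at the $T_\alpha$ with $\alpha\in A_0$.

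Once this is corrected, the regrouping step must also be adjusted: for \emph{every} $A\subset\mathscr A$ (not only for $A\in\Clan(X,D)_0$) one chooses a maximal face $A'\in\Clanmax(X,D)_0$ containing $A_0$, and absorbs the contribution of $Z_A$ into $P_{A'}$ after multiplying by the missing factors $\prod_{\alpha\in A'\setminus A_0}(1-\Lef^{-1}T_\alpha)$. This is exactly the paper's argument; with the corrected form of $Z_A$ your proof becomes that argument.
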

\begin{proof}
Let us first assume that $f$ extends to a regular map
from~$\mathscr X$ to~$\P^1_R$.

The special case $\Clan(X,D)_0=\Clan(X,D)$ is treated by
Proposition~\ref{coro.Z-rational}.
As in the previous section, we write 
\begin{equation}
Z(\mathscr X,h\mathrm e(f);\mathbf T) = 
\sum_{A\in\Clan(X,D)} 
 Z_A(\mathscr X,h\mathrm e(f);\mathbf T) . 
\end{equation}
For every $A\in\Clan(X,D)_0$, $Z_A(\mathscr X,h\mathrm e(f);\mathbf T)$
can be evaluated by the same computation as 
the one that we performed in the proof of Proposition~\ref{prop.ZA-rational}: 
there is
a polynomial~$Q_A(\mathbf T)$ 
\tcr{with coefficients in~$\ExpMot_k$}
such that
\[
 \int_{\mathscr L(\mathscr X;D_A^\circ)}
        \prod_{\alpha\in\mathscr A} T_\alpha^{\ord_{\mathscr D_\alpha}(x)}
  h(x)   \mathrm e(f(x))\,\mathrm dx \\
 = Q_A(\mathbf T) 
 (1-\Lef^{-1})^{\Card(A)} \prod_{\alpha\in A} \frac{1}{1-\Lef^{-1}T_\alpha}
\]
and such that
$Q_A(\mathbf T)$ is congruent
to
\[ \int_{\mathscr L(\mathscr D_A;D_A^\circ)} h(x)\mathrm e(f(x))\,\mathrm dx\]
modulo the ideal generated by the polynomials
$1-\Lef^{-1}T_\alpha$.

\tcr{The general case is treated by adapting the arguments given
in the proof of Proposition~\ref{prop.ZA-rational}.
Let indeed} $A\subset\mathscr A$, let 
$A_0=\{\alpha\in A\sozat d_\alpha=0\}$
and let $A_1=A\setminus A_0$.
Define  a function~$g_A$ 
on~$\mathscr L(\mathscr X;D_A^\circ)$ by
\[ f = g_A \prod_{\alpha\in A} x_\alpha^{-d_\alpha}. \]
Since $f$ extends to a regular map to~$\P^1$, the divisors
of zeroes and of poles of~$f$ do not meet. Consequently, 
one has $d_\alpha\tcb{>}0$ for every $\alpha\in A_1$,
and $\ord\circ g_A$ is bounded \tcr{from above.}
As in the proof of Proposition~\ref{prop.ZA-rational},
$g_A$ can be expanded as a converging power series.
\tcr{Then, applying Proposition~\ref{prop.annulus-0},  
we observe that there exists an integer~$m$
such that the integral 
\[ \int_{\mathscr L(\mathscr X;D_A^\circ)} \prod_{\alpha\in\mathscr A}
T_\alpha^{\ord_{\mathscr D_\alpha(x)}}  h(x) \mathrm e(f(x))\,\mathrm dx\]
is equal to the analogous integral but restricted 
to the subset defined by the inequalities
$\ord(x_\alpha)\leq m$ for all $\alpha\in A_1$.}
By a similar argument 
to the proof of Proposition~\ref{prop.ZA-rational}, we conclude that
the power series~$Q_A(\mathbf T)$ defined by
\[
Q_A(\mathbf T)= Z_A(\mathscr X,h\mathrm e(f);\mathbf T) \prod_{\alpha\in A_0}
(1-\Lef^{-1}T_\alpha)
\]
is in fact a polynomial.

As in the proof of Corollary~\ref{coro.Z-rational},
the proposition follows by 
choosing, for every subset $A\subset\mathscr A$ some 
maximal subset $A'\in \Clan(X,D)_0$ such that $A_0\subset A'$
and regrouping the terms according to the chosen subset.

\color{blue}
This concludes the proof when $f$ extends to a regular
morphism from~$\mathscr X$ to~$\P^1_R$. To treat the general case,
recall that there exists a proper birational morphism
$\pi\colon\mathscr Y\to\mathscr X$ which is a composition
of blowing-ups with smooth centers contained in 
the special fiber such that $\pi^*f$
extends to a regular map from~$\mathscr Y$ to~$\P^1_R$.
Since $\pi$ induces an isomorphism on the generic fiber,
it does not modify the set~$\mathscr A$,
nor the analytic Clemens complexes $\Clan(X,D)$
and $\Clan(X,D)_0$.

Let $\mathscr D'_\alpha$ be the strict transform of~$\mathscr D_\alpha$.
Then $h_\alpha=\ord_{\mathscr D_\alpha}-\ord_{\mathscr D'_\alpha}$
is a constructible function on~$\mathscr L(\mathscr Y)$ which
takes only finitely many values.
By the change of variable formulas, one has
\begin{align*}
 Z(\mathscr X,h\mathrm e(f),\mathbf T)
& = \int_{\mathscr X} \prod_{\alpha\in\mathscr A} T_\alpha^{\ord_{\mathscr D_\alpha}(x)}  h(x) \mathrm e(f(x))\,\mathrm dx \\
& = \int_{\mathscr Y} \prod_{\alpha\in\mathscr A} T_\alpha^{\ord_{\mathscr D_\alpha}(\pi(y))}  \pi^*h(y) \mathrm e(\pi^*f(y))\Lef^{\ord J_\pi(y)}\,\mathrm dy \\
& = \int_{\mathscr Y}
\prod_{\alpha\in\mathscr A} T_\alpha^{\ord_{\mathscr D'_\alpha}(y)}  
\prod_{\alpha\in\mathscr A} T_\alpha^{h_\alpha(x)} \pi^*h(y) \mathrm e(\pi^*f(y))\Lef^{\ord J_\pi(y)}\,\mathrm dy.
\end{align*}
We then decompose this integral according to the values of~$h_\alpha$
and~$\ord J_\pi$ and compute  each individual part as in the first
part of the proof.
Combining the the various contributions implies the proposition. 
\end{proof}

\begin{prop}\label{prop.mIie2}\label{coro.mIie-rational}
The power series~$Z(\mathscr X,h\mathrm e(f);\mathbf T)$ is a rational function.
\end{prop}
\begin{proof}
Let $\pi\colon\mathscr Y\ra\mathscr X$ be a proper birational morphism
such that the rational map $\pi^*f$  extends to a
regular morphism from~$\mathscr Y_K$ to~$\P^1_K$
and such that the horizontal part of~$\pi^*\mathscr D$ is  a 
relative divisor with strict normal crossings.
For every $\alpha\in\mathscr A$,
we write $\mathscr D'_{\alpha}$ 
for the strict transform of~$\mathscr D_{\alpha}$;
let $(\mathscr E_{\beta})_{\beta\in\mathscr B}$ be the family
of horizontal exceptional divisors.

Let $\alpha\in\mathscr A$. 
There exists a family $(m_{\alpha,\beta})_{\beta\in\mathscr B}$
of nonnegative integers such that 
\[ \pi^*\mathscr D_{\alpha,K}=
\mathscr D'_{\alpha,K}+\sum_{\beta\in\mathscr B}
    m_{\alpha,\beta}  \mathscr E_{\beta,K}. \]
Consequently, there exists a bounded constructible function~$u_\alpha$
on~$\mathscr L(\mathscr Y)$ such that
\begin{equation}
  \pi^*\ord_{\mathscr D_\alpha} = \ord_{\mathscr D'_\alpha}
 + \sum_{\beta\in\mathscr B} m_{\alpha,\beta} \ord_{\mathscr E_\beta} + u_\alpha. 
\end{equation}

Let also $(\nu_{\beta})_{\beta\in\mathscr B}$ be the family
of positive integers such that
\[ K_{\mathscr Y_K/\mathscr X_K}= \sum_{\beta\in\mathscr B} (\nu_\beta-1) \mathscr E_{\beta,K}. \]
This implies that there exists a bounded constructible function~$v$ 
on~$\mathscr L(\mathscr Y)$ such that
\[ \ord_{K_{\mathscr Y/\mathscr X}}
     = \sum_{\beta\in\mathscr B} (\nu_\beta-1) \ord_{\mathscr E_\beta}+v. \]

Then, using the change of variables formula 
(Theorem 13.2.2 in \cite{cluckers-loeser:2008}), we find
\begin{align*}
Z(\mathscr X,h\mathrm e(f);\mathbf T) 
&= \int_{\mathscr L(\mathscr X)} \prod_{\alpha\in\mathscr A}
 T_\alpha^{\ord_{\mathscr D_\alpha}(x)} h(x) \mathrm e(f(x))\,\mathrm dx \\
&= \int_{\mathscr L(\mathscr Y)} \prod_{\alpha\in\mathscr A}
 T_\alpha^{\ord_{\mathscr D'_\alpha}(y)+\sum_\beta m_{\alpha,\beta}\ord_{\mathscr E_\beta}(y)+u_\alpha(y)}  \times {} \\
& \hskip.2\textwidth {} \times 
     \Lef^{v(y)+\sum_\beta (\nu_\beta-1)\ord_{\mathscr E_\beta}(y)} 
  \pi^*h(y) \mathrm e(\pi^*f(y))\,\mathrm dy \\
& = \int_{\mathscr L(\mathscr Y)} 
\prod_{\alpha\in\mathscr A} T_\alpha^{\ord_{\mathscr D'_\alpha}(y)+u_\alpha(y)}
\prod_{\beta\in\mathscr B} P_\beta(\mathbf T)^{\ord_{\mathscr E_\beta}(y)}
\Lef^{v(y)} 
\pi^*h(y) \mathrm e(\pi^*f(y))\,\mathrm dy ,
\end{align*}
where, for every $\beta\in\mathscr B$, we have set
\begin{equation}\label{eq.SfromT}
 P_\beta (\mathbf T)= \Lef^{\nu_\beta-1} \prod_{\alpha\in\mathscr A} T_\alpha^{m_{\alpha,\beta}}.
\end{equation}

Let us introduce a family $\mathbf S=(S_\beta)_{\beta\in\mathscr B}$
of indeterminates. For every
\tcr{motivic residual} function~$w$ on~$\mathscr L(\mathscr Y)$
and every rational function~$g$ on~$\mathscr Y_K$,
let us also define, 
analogously to Equation~\eqref{eq.motivic-Igusa},
a generating series
\[ Z(\mathscr Y, w\mathrm e(g);(\mathbf T,\mathbf S))
 = \int_{\mathscr L(\mathscr Y)} \prod_{\alpha\in\mathscr A} T_\alpha^{\ord_{\mathscr D'_\alpha}(y)} \prod_{\beta\in\mathscr B} S_\beta^{\ord_{\mathscr E_\beta}(y)}  w(y)\mathrm e(g(y))\,\mathrm dy. \]
\tcr{If $g$} induces a morphism from~$\mathscr Y_K$ to~$\P^1_K$,
it follows from Proposition~\ref{prop.mIie-poles} \tcb{and the proof of Proposition~\ref{prop.mIie2}}
\tcr{that $Z(\mathscr Y,w \mathrm e(g);(\mathbf T,\mathbf S))$}
is a rational function of $(\mathbf T,\mathbf S)$,
for every~$w$.

For every $p\in \Z^{\mathscr A}$, let $W_p$
be the constructible subset of~$\mathscr L(\mathscr Y)$
on which the family $(u_\alpha)$ equals~$p$.
These subsets form a finite partition of~$\mathscr L(\mathscr Y)$
and one has
\[
Z(\mathscr X,h\mathrm e(f);\mathbf T) = \sum_p \prod_{\alpha\in\mathscr A} T_\alpha^{p_\alpha}
  Z(\mathscr  Y,\mathbf 1_{W_p}\Lef^v \pi^* h\mathrm e(\pi^*f);(\tcb{\mathbf T},(P_\beta(\mathbf T))), \]
where the polynomials~$P_\beta$ are defined in~\eqref{eq.SfromT}.
Consequently, $Z(\mathscr X,h\mathrm e(f);\mathbf T)$ is a rational
function of~$\mathbf T$.
\end{proof}

\section{Local Fourier transforms}\label{sec.local}

In this section, we finally prove the propositions
stated in Section~\ref{ss.local-results}.

Fix a place  $v\in C (k)$.
We have to study 
the Laurent series~\eqref{eq.ZvT} given by
\begin{align*}
Z_v(\mathbf T,\xi)
& = \sum_{\text{$(\mathbf m_v,\beta_v)$ integral}}
\mathscr F(\mathbf 1_{G(\mathbf m_v,\beta_v)},\xi)
\prod_{\alpha\in \mathscr A}T_\alpha^{\norm {\mathbf m_v,\beta_v}_\alpha}\\
&= \sum_{\text{$(\mathbf m_v,\beta_v)$ integral}} 
 \prod_{\alpha\in \mathscr A}T_\alpha^{\norm {\mathbf m_v,\beta_v}_\alpha}
\int_{G(\mathbf m_v,\beta_v)} \mathrm e(\langle g,\xi\rangle)\,\mathrm dg.
\end{align*}

Our first step will be to replace 
the motivic Haar integral with respect to~$\mathrm dg$
by a motivic integral with respect to the motivic measure on
the arc space~$\mathscr L(\mathscr X)$.
Then, we will split the motivic integral according to the natural
stratification of the special fibre.

Since the place~$v$ is fixed,
we often omit the index~$v$ from the notation, writing
$K\simeq k\lpar t\rpar$ for $F_v=k(C)_v$
and $R\simeq k\lbra t\rbra$ for the ring of integers of~$F_v$.

\Subsection{The Haar integral as a motivic measure}
View the invariant top-differential form~$\mathrm dg$ on~$G_F$
as a meromorphic top-differential form~$\omega_X$ on~$\mathscr X$.
Since $\mathscr X$ is proper over~$C $,
one has  $\mathscr X(k\lbra t\rbra)=X(K)$.
\tcr{The order of contact
of an arc with the divisor of~$\omega$}
induces  an order function 
$\ord_\omega\colon X(K)\ra\Z\cup\{\infty\}$,
which  takes finite values on $G(K)$.

\tcr{The Poisson summation formula involves (motivic) integrals
on $k\lpar t\rpar^n=G(F)$.
The injection $G(F)\subset \mathscr X(k\lbra t\rbra)$
allows to view any Schwartz-Bruhat function~$\Phi$ on~$G(F)$
as an exponential motivic function on the arc space~$\mathscr L(\mathscr X)$
of~$\mathscr X$. 
The following lemma shows how both motivic integrals are related.
It is a motivic analogue of the standard fact that the Lebesgue mesure
on the real line~$\R$ is the volume-form associated
with the differential form $\mathrm dx$, or with the corresponding singular
differential form on~$\P^1(\R)$.}
\begin{lemm}\label{lemm.haar-motivic}
\tcr{Let $\Phi\in\mathscr S(F^n)$.}
Then the motivic integral
$ \int_{G(K)} \Phi(g) \,\mathrm dg $
can be rewritten as
\[ \int_{\mathscr L(\mathscr X)} \Phi(x) \Lef^{-\ord_\omega(x)}\,\mathrm dx \]
where $\mathrm dx$ denotes the motivic measure on 
the arc space~$\mathscr L(\mathscr X)$.
\end{lemm}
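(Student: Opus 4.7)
The plan is to reduce the statement to the classical dictionary between Haar integrals on $G(K) = K^n$ and motivic integrals on arc spaces, via local étale coordinates on the smooth model $\mathscr X$.

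First I would extend the integrand on the right-hand side by declaring $\Phi(x)\Lef^{-\ord_\omega(x)} = 0$ whenever the generic reduction of the arc $x$ lands in the boundary $X \setminus G_F$; this is consistent because $\omega_X$ acquires poles along $\partial X$, so that the weight $\Lef^{-\ord_\omega(x)}$ vanishes formally along such arcs. The locus of arcs whose generic point lies in $G_F$ is the arc space $\mathscr L(G)$ viewed as an open subscheme of $\mathscr L(\mathscr X)$, and its $k$-points are identified with $G(K)$ (while the complementary locus has a smaller-dimensional structure with respect to which the weighted measure is trivial).

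Next I would cover $\mathscr X$ by finitely many affine opens $\Spec A_i$ admitting étale morphisms $u_i\colon\Spec A_i\to \Aff^n_R$; such covers exist because $\mathscr X$ is smooth of relative dimension~$n$. On each chart one can write
\[ \omega_X|_{\Spec A_i} = f_i \cdot u_i^{*}(\mathrm dx_1\wedge\cdots\wedge\mathrm dx_n) \]
for a meromorphic function $f_i$ on $\Spec A_i$. By definition of $\ord_\omega$, one has $\ord_\omega(x) = \ord(f_i(x))$ for every arc $x$ factoring through this chart. On the locus of arcs mapping into $\Spec A_i$ with generic point in $G_F$, the Haar measure associated to $\omega_X$ transforms, via the étale change-of-variables formula, into $\Lef^{-\ord(f_i)}$ times the canonical motivic measure on $\mathscr L(\Spec A_i)$ (\emph{cf.} \cite{loeser/sebag2003}). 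This is precisely the local version of the asserted identity.

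Finally I would patch the local identities by a cut-and-paste decomposition of $\mathscr L(\mathscr X)$ into the preimages of a locally closed stratification refining the cover $(\Spec A_i)$, using that both sides are additive with respect to such decompositions and that the local expressions $\Lef^{-\ord(f_i)}\,\mathrm dx$ glue to the globally defined weighted measure $\Lef^{-\ord_\omega(x)}\,\mathrm dx$. The only subtle point is the compatibility of the two extensions by zero (the Haar side vanishes outside $G(K)$, the motivic side vanishes on arcs with generic point in the boundary divisor); these differ only along arcs whose generic point lies in $G_F$ but whose reduction lies in $\partial X$, on which the translation of weights is already settled by the local étale computation.
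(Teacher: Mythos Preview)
Your plan has a gap at its core. A small point first: poles of $\omega_X$ along $\partial X$ send $\ord_\omega(x)$ to $-\infty$ for arcs lying generically in the boundary, so $\Lef^{-\ord_\omega(x)}$ \emph{diverges} there rather than vanishes; extending by zero is still legitimate, but only because that locus is the arc space of a scheme of smaller dimension and hence has motivic measure zero, not because of the sign of the pole. The more serious problem is that the left-hand side $\int_{G(K)}\Phi\,\mathrm dg$ is \emph{defined} in this paper via the Schwartz--Bruhat formalism of \S\ref{ss-sbfunctions} --- summation over the finite-dimensional quotients $\Aff_k^{n(M,N)}$ with normalization $\Lef^{-nN}$ --- with no reference to arc spaces or to~$\omega_X$. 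Your key step, asserting that ``the Haar measure associated to $\omega_X$ transforms, via the \'etale change-of-variables formula, into $\Lef^{-\ord(f_i)}$ times the canonical motivic measure on $\mathscr L(\Spec A_i)$'', presupposes exactly the dictionary the lemma is trying to establish. The change-of-variables you invoke compares two \emph{arc-space} integrals; it does not by itself compare an arc-space integral to a Schwartz--Bruhat one. Moreover your \'etale coordinates $u_i$ land in $R^n$ in variables unrelated to the standard linear coordinates on $G=\Aff^n$, so a change-of-variables statement for the Schwartz--Bruhat integral itself would still be owed.

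The paper closes this gap by a different, more concrete route. It first remarks that the weighted integral $\int_{Z(K)} f\,|\omega|$ computed on a weak N\'eron model is independent of the model (this \emph{is} the arc-space change-of-variables formula, used in the direction you intended). Since $X$ and $\P^n$ are both smooth compactifications of~$G$ carrying the same invariant form, this reduces the lemma to the single model $\mathscr P=\P^n_R$. There, the identity is \emph{verified by hand} on characteristic functions of products of annuli $\Omega=\prod_i \{x:\ord(x_i)=m_i\}$: one passes to the affine chart of $\P^n$ determined by the index $j$ with $m_j=\min(0,m_1,\dots,m_n)$, computes the arc-space volume of $\Omega$ there, multiplies by the weight $\Lef^{-\ord_{\omega_P}}=\Lef^{-(n+1)\min(0,m_1,\dots,m_n)}$, and checks that the result agrees with the Schwartz--Bruhat volume of $\Omega$ in $K^n$. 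That explicit check on generators is the ingredient your argument is missing.
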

\begin{proof}
Let us begin with a remark.
Let $Z$ be a smooth projective $K$-scheme and let $\omega$
be a meromorphic differential form on~$Z$. 
Let $f$ be a motivic function on~$Z$.  \tcr{By this, we mean that
one is given a proper flat $R$-model~$\mathscr Z$ of~$Z$,
an integer~$m$, and a class~$\phi\in\ExpMot_{\mathscr L_m(\mathscr Z)}$.
We identify the function associated with a triple $(\mathscr Z,m,\phi)$
and the function associated with the triple $(\mathscr Z,m+1,\pi^*\phi)$,
where $\pi$ is the canonical morphism from~$\ExpMot_{\mathscr L_m(\mathscr Z)}$
to~$\ExpMot_{\mathscr L_{m+1}(\mathscr Z)}$ defined by base change;
similarly, if $p\colon\mathscr Z'\to\mathscr Z$ is a morphism
of models, we identify the functions associated with triples
$(\mathscr Z,m,\phi)$ and $(\mathscr Z',m,p^*\phi)$.}
Then one defines 
the integral $\int_{Z(K)} f \abs\omega$ of the motivic function~$f$ with respect
to~$\omega$ by the formula
\[  \int_{Z(K)} f \abs\omega = \int_{\mathscr L(\mathscr Z)} \phi(z) \Lef^{-\ord_\omega(z)}\,\mathrm dz, \]
where $\mathrm dz$ is the motivic measure 
on the arc space~$\mathscr L(\mathscr Z)$.
\tcr{We may even assume that the smooth locus~$\mathscr Z^0$
of~$\mathscr Z$ is a weak Néron model of~$Z$ and restrict
the integral over~$\mathscr L(\mathscr Z^0)$.}
The \tcr{definition of motivic integration and the} change of variables formula 
(Theorem 13.2.2 in \cite{cluckers-loeser:2008})
shows that this integral is independent of the choice
\tcr{of the triple~$(\mathscr Z,m,\phi)$ that defines~$f$.}

Let us show how this remark implies our lemma.

Let $\mathscr P=\P^n=\Proj R[x_0,\dots,x_n]$ 
be the natural compactification of~$G_R=\Aff^n =\Spec (R[x_1,\dots,x_n])$,
let $P=\mathscr P_K$.
Let $\omega$ be the differential form
$\mathrm dx_1\wedge\dots\wedge\mathrm dx_n$ on~$G_R$;
we will write $\omega_P$ or~$\omega_X$
according to~$\omega$ being viewed as a meromorphic differential form 
on~\tcr{$\mathscr P$} or on~$\mathscr X$.
Since $\mathscr X$ is regular, one has 
\[
  \int_{X(K)} \Phi(x) \abs{\omega_X}
= \int_{\mathscr L(\mathscr X)} \Phi(x) \Lef^{-\ord_{\omega_X}(x)}\,\mathrm dx .
 \]
However, this integral can be computed starting from
the model~$\mathscr P$, and one has
\[ \int_{X(K)} \Phi(x) \abs{\omega_{X} }
=\int_{P(K)} \Phi(p) \abs{\omega_P}. \]
Since $\div(\omega_P)=(n+1)\div(x_0)$, 
$\ord_{\omega_P}(x)=(n+1)\min (0,\ord(x_1),\dots,\ord(x_n))$
for every $(x_1,\dots,x_n)\in K^n$. 
Consequently, by the very definition of the motivic integral on~$G(K)$,
this last integral equals $\int_{G(K)} \Phi(g)\,\mathrm dg$.
It suffices to show this equality for 
\tcr{simple functions, as in~\S\ref{par.simple-functions}.}
Let $(m_1,\dots,m_n)$ be a family of integers and
let $\Omega$ be the set of points in~$K^n$
such that $\ord(x_i)=m_i$ for every~$i\in\{1,\dots,n\}$. 
Set $m_0=0$, let $m=\min(m_0,\dots,m_n)$, and let $j\in\{0,\dots,n\}$
be such that $m=m_j$.
We view~$\Omega$ in the affine chart $\{x_j=1\}$ of~$\mathscr P$
and identify it with $\prod_{\substack{0\leq i\leq n\\ i\neq j}} 
      t^{m_i-m}R^\times$.
Therefore, its $\mathrm dp$-measure (that is, its motivic volume
with respect to the arc space of~$\mathscr P$) equals
\[ \int_{\Omega}\mathrm dp = \Lef^{\sum_{i\neq j} (m_i-m)} (1-\Lef^{-1})^n
 = (1-\Lef^{-1})^n  \Lef^{\sum m_i } \Lef^{-(n+1)m} . \]
On the other hand, $\Omega$ is also viewed as the set 
$\prod_{i=1}^n t^{-m_i}R^\times$ 
in~$K^n$ hence its $\mathrm dg$-integral is given by
\[ \int_{\Omega} \mathrm dg
  = \Lef^{\sum m_i} (1-\Lef^{-1})^n. \]
Consequently, 
\[ \int_\Omega\mathrm dg = \int_\Omega \Lef^{-\ord_\omega(p)}\,\mathrm dp.\]

This concludes the proof of the lemma.
\end{proof}

\subsection{Partitions of unity}
Let $\mathscr B_1$ be the subset of~$\mathscr B_v$
consisting of those $\beta$ for which the multiplicity $\mu_\beta$
equals~$1$.
Let~$\mathscr X_1$  be the complement in~$\mathscr X$ 
of the union of the components~$E_\beta$,
for $\beta\in \mathscr B_v\setminus\mathscr B_1$, and of 
the intersections of distinct vertical components.
As we know from Lemma~\ref{lemm.weakNeron}, this is a weak
N\'eron model of~$X$ over~$\Spec(R)$.

For every subset $A\subset\mathscr A$ and every~$\beta\in \mathscr B_{1}$,
let $\Delta(A,\beta)$
be the locally closed subset of the special
fibre~$\mathscr X_v$ 
corresponding to points~$\tilde x$ which 
belong to the horizontal divisors $\mathscr D_\alpha$, for $\alpha\in A$,
and to no other, as well as to the vertical divisor~$E_\beta$, but no other.
Let $\Omega(A,\beta)$ be its preimage in  $\mathscr L(\mathscr X)$
by the specialization morphism $\mathscr L(\mathscr X)\ra \mathscr X_v$.

Recall that we have defined in Section~\ref{par.B0}
a subset $\mathscr B_0$ of~$\mathscr B$ such that
for every point $x\in G(\tcb{F_v})$, the integrality condition 
``$x\in\mathscr U(\mathfrak o_{\tcb{F_v}})$ \tcr{if $v\in C_0$}'' 
holds if and only if there exists
$\beta\in\mathscr B_0$ such that
$x\in\Omega(\emptyset,\beta)$.

Let $\mathscr L(\Aff^1)\simeq \Spec (k[a_0,a_1,\dots])$
be the arc space of the affine line, and
let $\mathscr L(\Aff^1;0)$
be its closed subspace consisting of arcs based at the origin.
\tcr{Let $A$ be a subset of~$\mathscr A$.}
By Lemma~\ref{lemm.hensel}, there is a definable isomorphism 
which preserves the motivic measure
from~$\Omega(A,\beta)$ to the product of
$ \mathscr L(\Aff^1;0)^A \times_k\Delta(A,\beta)$
with $\mathscr L(\Aff^1)^{n-\Card(A)}$. 
Moreover, this isomorphism induces the following equalities\tcr{, for $g\in G(F)$}:
\begin{equation}
(g,\mathscr D_\alpha)_v  = \begin{cases}
 \ord_v(x_\alpha) &\text{if $\alpha\in A$} \\
 0 & \text{otherwise,} \end{cases} \end{equation}
\tcr{(where, as in Lemma~\ref{lemm.hensel}, $x_\alpha$ is the $\alpha$-component of the image of~$g$)}
and
\begin{equation}
(g,E_\gamma)_v = \begin{cases} 1 & \text{if $\gamma=\beta$} \\
  0 & \text{otherwise.} \end{cases}
\end{equation}

Recall that in Equation~\eqref{eq.rho-beta},
we had defined integers~$\rho_\beta$
such that
\[-\div(\omega_X)=\sum_{\alpha\in\mathscr A}\rho_\alpha
\mathscr D_\alpha+\sum_{\beta\in\mathscr B} \rho_\beta E_\beta. \]
This implies
\begin{equation}
-\ord_\omega(x) = \rho_\beta+ \sum_{\alpha\in A} \rho_\alpha \ord_v(x_\alpha).
\end{equation}
Recall also the definition of integers~$e_{\alpha,\beta}$
in Equation~\ref{eq.e-alpha-beta}:
\[ \mathscr L_\alpha =
     \mathscr D_\alpha + \sum_{\beta\in\mathscr B} e_{\alpha,\beta}E_\beta .
\]
To shorten the notation below, we then let 
\begin{equation}
\rho(A,\beta) = \rho_\beta + \sum_{\alpha\in\mathscr A} \rho_\alpha e_{\alpha,\beta}.
\end{equation}
We also define $e_\alpha$ to be the constructible function 
\begin{equation}
e_\alpha(\cdot) = \sum_{\beta\in\mathscr B} e_{\alpha,\beta} \ord_{E_\beta}(\cdot) 
\end{equation}
on $\mathscr L(\mathscr X)$ so that $e_\alpha(g)=e_{\alpha,\beta}$ 
for every $g\in G(\tcb{F_v})$ such that $(g,E_\beta)_v=1$.

Using this notation and applying Lemma~\ref{lemm.haar-motivic},
we can rewrite
the motivic Fourier transforms $Z_v(\mathbf T,\xi)$ 
as sums of motivic integrals over arc spaces $\Omega(A,\beta)$.

\begin{lemm}\label{lemm.pu}
For every motivic residual function~$h$ on~$\mathscr L(\mathscr X)$
and every $\xi\in G(\tcb{F_v})$, one has
\begin{multline}\label{eq.pu}
\int_{G(\tcb{F_v})}\prod_{\alpha\in\mathscr A} T_\alpha^{(g,\mathscr L_\alpha)_v} h(g)\mathrm e(\langle g,\xi\rangle) \,\mathrm dg \\
=
\sum_{\substack{A\subset\mathscr A\\ \beta\in\mathscr B_1}}
\prod_{\alpha\in\mathscr A} T_\alpha^{e_{\alpha,\beta}}
\Lef^{\rho_\beta}
\int_{\Omega(A,\beta)}  
\prod_{\alpha \in A} (\Lef^{\rho_\alpha}T_\alpha)^{\ord(x_\alpha)}
h(x)\mathrm e(\langle x,\xi\rangle)\,\mathrm dx.
\end{multline}
\end{lemm}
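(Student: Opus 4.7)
The plan is to transform the left-hand side step by step, using the tools already assembled in this section: the identification of the Haar integral as a motivic integral on $\mathscr L(\mathscr X)$ (Lemma~\ref{lemm.haar-motivic}), the stratification of the weak Néron model $\mathscr X_1$ (Lemma~\ref{lemm.weakNeron}) by the locally closed subsets $\Delta(A,\beta)$, and the local coordinates afforded by Lemma~\ref{lemm.hensel}.

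First, I would apply Lemma~\ref{lemm.haar-motivic} to the function $\Phi(g)=\prod_\alpha T_\alpha^{(g,\mathscr L_\alpha)} h(g)\mathrm e(\langle g,\xi\rangle)$, rewriting the integral over $G(F)$ as an integral over $\mathscr L(\mathscr X)$ against $\Lef^{-\ord_\omega(x)}\,\mathrm dx$. Since the complement of $\mathscr X_1$ in $\mathscr X$ has relative dimension strictly less than $n$, its arc space has motivic measure zero in $\mathscr L(\mathscr X)$, and the same is true of the locus where the generic fibre escapes $G_F$ (which lies in some $\mathscr L(\mathscr D_\alpha)$). Hence the integral may be replaced by a sum over the disjoint pieces $\Omega(A,\beta)$ indexed by $A\subset\mathscr A$ and $\beta\in\mathscr B_1$, which by construction partition $\mathscr L(\mathscr X_1)$ up to a measure-zero set.

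Next, on each stratum $\Omega(A,\beta)$ I would substitute the three key identities recorded just before the lemma. From $\mathscr L_\alpha=\mathscr D_\alpha+\sum_\gamma e_{\alpha,\gamma}E_\gamma$ together with the formulas $(g,\mathscr D_\alpha)_v=\ord_v(x_\alpha)$ if $\alpha\in A$ and $0$ otherwise, and $(g,E_\gamma)_v=\delta_{\gamma,\beta}$, one obtains
\[ \prod_{\alpha\in\mathscr A}T_\alpha^{(g,\mathscr L_\alpha)}
=\prod_{\alpha\in\mathscr A}T_\alpha^{e_{\alpha,\beta}}\cdot\prod_{\alpha\in A}T_\alpha^{\ord_v(x_\alpha)}. \]
From $-\div(\omega_X)=\sum_\alpha\rho_\alpha\mathscr D_\alpha+\sum_\gamma\rho_\gamma E_\gamma$ one obtains on the same stratum
\[ \Lef^{-\ord_\omega(x)}=\Lef^{\rho_\beta}\prod_{\alpha\in A}\Lef^{\rho_\alpha\ord_v(x_\alpha)}. \]
Multiplying these expressions and factoring out the constant $\prod_\alpha T_\alpha^{e_{\alpha,\beta}}\Lef^{\rho_\beta}$ from the integral over $\Omega(A,\beta)$ produces exactly the summand displayed on the right-hand side of~\eqref{eq.pu}.

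The computation is essentially bookkeeping once the stratification is in hand; the only genuine point to check is that the pieces of $\mathscr L(\mathscr X)$ discarded along the way—the arcs lying entirely inside a component $E_\beta$ with $\mu_\beta>1$, the arcs lying in an intersection of two distinct vertical components, and the arcs whose generic fibre lies on $\partial X$—all contribute zero to the motivic integral. This follows from the fact that each of them is contained in the arc space of a closed subscheme of $\mathscr X$ of dimension strictly less than $n+1$, which has motivic measure zero in $\mathscr L(\mathscr X)$. Once this is dispatched, the identity~\eqref{eq.pu} drops out.
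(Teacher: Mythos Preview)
Your proposal is correct and follows precisely the approach the paper intends: the paper does not give an explicit proof of this lemma, but the sentence preceding it (``Using this notation and applying Lemma~\ref{lemm.haar-motivic}, we can rewrite\dots'') makes clear that the identity is meant to follow immediately from Lemma~\ref{lemm.haar-motivic}, the stratification of $\mathscr L(\mathscr X_1)$ by the $\Omega(A,\beta)$, and the three displayed formulas for $(g,\mathscr D_\alpha)_v$, $(g,E_\gamma)_v$, and $-\ord_\omega(x)$ on each stratum. Your write-up supplies exactly these details, including the measure-zero justification for discarding the complement of $\mathscr X_1$ and the boundary arcs, and the bookkeeping is carried out correctly.
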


\Subsection{Places in~$C_0$: Proof of Proposition~\ref{prop.almost}}

Let $v$ be a place in~$C_0$.
In this case, $Z(\mathbf T,\xi)$
is given by Lemma~\ref{lemm.pu}, taking for motivic function~$h$
the characteristic function of the set~$\mathscr U(\mathfrak o)$
within~$G(\tcb{F_v})$.
In other words, one has $h\equiv 0$ on $\Omega(A,\beta)$ if $A\neq\emptyset$
or $\beta\not\in\mathscr B_0$, and $h\equiv 1$ otherwise.
Consequently,
\[
Z(\mathbf T,\xi)= 
\sum_{\beta\in\mathscr B_0}
  \prod_{\alpha\in\mathscr A}  T_\alpha^{e_{\alpha,\beta}}
\Lef^{\rho_\beta} \int_{\Omega(\emptyset,\beta)} \mathrm e(\langle x,\xi\rangle)\,\mathrm dx.
\]
We see in particular  that it is a polynomial.
Assume that $\xi=0$. Then, the factor $\mathrm e(\langle x,\xi\rangle)$
equals~$1$, so that
\[
Z(\mathbf T,0)= 
\sum_{\beta\in\mathscr B_0}
  \prod_{\alpha\in\mathscr A}  T_\alpha^{e_{\alpha,\beta}}
\Lef^{\rho_\beta} [\Delta(\emptyset,\beta)] \Lef^{-n} .
\]
In particular, 
\[Z_\lambda(\Lef^{-1},0)
= Z((\Lef^{1-\rho_\alpha}),0)
= \sum_{\beta\in\mathscr B_0}
 \prod_{\alpha\in\mathscr A} \Lef^{(1-\rho_\alpha)e_{\alpha,\beta}}
\Lef^{\rho_\beta} [\Delta(\emptyset,\beta)]\Lef^{-n}.
\]
This is an effective element of ${\Mot}_k$,
non-zero unless $\mathscr U(\mathfrak o)=\emptyset$.
\tcr{By the last assertion of Lemma~\ref{lemm.replaceU}
and the hypothesis of Setting~2,}
this concludes the proof of Proposition~\ref{prop.almost}.


\Subsection{Trivial character (places in~$C \setminus C_0$):
Proof of Proposition~\ref{prop.v.trivial}}
Assume that $v\in C \setminus C_0$.
Then, $Z(\mathbf T,0)$ is given by Lemma~\ref{lemm.pu}, applied with
$h\equiv 1$. This leads to the following computation.
\begin{align*}
Z(\mathbf T,0) 
&=  \sum_{\substack{A\subset\mathscr A \\ \beta \in\mathscr B_1}} 
\prod_{\alpha\in \mathscr A} T_{\alpha}^{e_{\alpha,\beta}}
\Lef^{\rho_\beta}
\int_{\Omega(A,\beta)} 
\prod_{\alpha\in A} (\Lef^{\rho_\alpha} T_\alpha)^{\ord(x_\alpha)}
\, \mathrm dx \\
& = 
\sum_{\substack{A\subset\mathscr A \\ \beta \in\mathscr B_1}} 
\prod_{\alpha\in \mathscr A} T_{\alpha}^{e_{\alpha,\beta}}
\Lef^{\rho_\beta}
[\Delta(A,\beta)]\Lef^{-n+\Card(A)}
\prod_{\alpha\in A} \int_{\mathscr L(\Aff^1;0)} (\Lef^{\rho_\alpha}T_\alpha)^{\ord(x)}\,\mathrm dx.
\end{align*}
In the last formula, the integral over~$\mathscr L(\Aff^1;0)$ is
given by a geometric series, familiar in the theory
of motivic Igusa functions. Indeed, for every $\alpha\in A$,
\begin{align*}
 \int_{\mathscr L(\Aff^1;0)} (\Lef^{\rho_\alpha}T_\alpha)^{\ord(x)}\,\mathrm dx& =
\sum_{m=1}^\infty   (\Lef^{\rho_\alpha}T_\alpha)^m\int_{\ord(x)=m}\,\mathrm dx\\
&= 
\sum_{m=1}^\infty   (\Lef^{\rho_\alpha}T_\alpha)^m\Lef^{-m} (1-\Lef^{-1}) \\
& =
(1-\Lef^{-1})  \frac{\Lef^{\rho_\alpha-1}T_\alpha}{1-\Lef^{\rho_\alpha-1}T_\alpha}.
\end{align*}
Consequently,
\begin{equation}\label{eq.ZT0-1}
Z(\mathbf T,0) 
 = 
\sum_{\substack{A\subset\mathscr A \\ \beta \in\mathscr B_1}} 
\prod_{\alpha\in \mathscr A} T_{\alpha}^{e_{\alpha,\beta}}
\Lef^{\rho_\beta}
[\Delta(A,\beta)]\Lef^{-n+\Card(A)}(1-\Lef^{-1})^{\Card(A)}
\prod_{\alpha\in A} 
\frac{\Lef^{\rho_\alpha-1}T_\alpha}{1-\Lef^{\rho_\alpha-1}T_\alpha}
.
\end{equation}
 
\tcr{For every pair~$(A,\beta)$ 
such that $\Delta(A,\beta)\neq\emptyset$,
fix a maximal subset~$A_0$ of~$\mathscr A$ such that $A\subset A_0$
and $\Delta(A_0,\beta)\neq\emptyset$.}
\tcr{Let us fix such a maximal set~$A_0$ and let us then collect the  terms
of Equation~\eqref{eq.ZT0-1}
corresponding to pairs~$(A,\beta)$ that are associated with~$A_0$.}
The corresponding subseries of~$Z(\mathbf T,0)$
is then given by 
\[
Z_{A_0}(\mathbf T,0)
 = 
\sum_{\substack{\beta\in\mathscr B_1 \\ (A,\beta)\mapsto A_0} }
\prod_{\alpha\in \mathscr A} T_{\alpha}^{e_{\alpha,\beta}}
\Lef^{\rho_\beta}
[\Delta(A,\beta)]\Lef^{-n+\Card(A)}(1-\Lef^{-1})^{\Card(A)}
\prod_{\alpha\in A} 
\frac{\Lef^{\rho_\alpha-1}T_\alpha}{1-\Lef^{\rho_\alpha-1}T_\alpha}
. \]
Consequently, there exists a Laurent polynomial~$P_{A_0}(\mathbf T)$ 
such that 
\[ Z_{A_0}(\mathbf T,0) \prod_{\alpha\in A_0} (1-\Lef^{\rho_\alpha-1}T_\alpha)
=P_{A_0}(\mathbf T).\]
From this expression, we also see that 
modulo the ideal generated by the polynomials $1-\Lef^{\rho_\alpha-1}T_\alpha$,
for $\alpha\in \mathscr A$, only the terms corresponding to $A=A_0$
remain, so that we have
\[ P_{A_0}(\mathbf T) \equiv 
       \sum_{\beta\in\mathscr B_1} 
\prod_{\alpha\in\mathscr A} \Lef^{(1-\rho_\alpha)e_{\alpha,\beta}}
\Lef^{\rho_\beta}
[\Delta(A_0,\beta)] \Lef^{-n+\Card(A_0)} (1-\Lef^{-1})^{\Card(A_0)}
.
\]
Let $u_{A_0}$ be the motivic residual function on~$\mathscr L(\mathscr X)$
which is given by
\[\rho_\beta + \sum_{\alpha\in\mathscr A} (1-\rho_\alpha)e_{\alpha,\beta}
\]
on $\Omega(A_0,\beta)$. 
By definition of motivic integration, one has
\begin{align*}
\int_{\mathscr L(\mathscr D_{A_0})} \Lef^{u_{A_0}}
&=  \sum_{\beta\in\mathscr B_1} \int_{\Omega(A_0,\beta)}
  \Lef^{\rho_\beta} \prod_{\alpha\in\mathscr A} \Lef^{(1-\rho_\alpha)e_{\alpha,\beta}}
 \\
&=   \sum_{\beta\in\mathscr B_1} 
  \Lef^{\rho_\beta} \prod_{\alpha\in\mathscr A} \Lef^{(1-\rho_\alpha)e_{\alpha,\beta}}
\Lef^{-n+\Card(A_0)} [\Delta(A,\beta)] 
\end{align*}
so that
\[  P_{A_0}(\mathbf T) \equiv 
(1-\Lef^{-1})^{\Card(A_0)}
\int_{\mathscr L(\mathscr D_{A_0})} \mathbf L^{u_{A_0}} . \]
The right-hand-side of the preceding congruence being
a non-zero effective element of ${\Mot}_k$,
this concludes the proof of Proposition~\ref{prop.v.trivial}.

\begin{coro}
Let $d=1+\dim\Clan(X,D)$ and
\tcr{let $a$ be a non-zero multiple of the integers $\rho_\alpha-1$,
for $\alpha\in\mathscr A$.}
The Laurent series
$Z_{\lambda}(T,0)$ in one variable~$T$ is a rational function
which belongs to~$\Mot_k\{T\}$. Moreover,
$(1-\Lef^aT^a)^d Z_\lambda(T,0)$ belongs
to $\Mot_k[T,T^{-1}]$ and
\[ (1-\Lef^a T^a)^d Z_{\lambda}(T,0)\Big|_{T=\Lef^{-1}} 
= (1-\Lef^{-1})^{d} 
\sum_{A\in\Clan[d](X,D) } 
\prod_{\alpha\in A} \frac{a}{\rho_\alpha-1}
\int_{\mathscr L(\mathscr D_A)} \Lef^{u_A(x)}\,\mathrm dx. \]
\end{coro}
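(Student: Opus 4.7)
My plan is to deduce this corollary from Proposition~\ref{prop.v.trivial} by a direct substitution, followed by specialization at $T=\Lef^{-1}$. Since the log-anticanonical case corresponds to $\lambda_\alpha = \rho_\alpha-1$, formula~\eqref{eq.ZT/ZT2} gives $Z_\lambda(T,0) = Z((T^{\rho_\alpha-1}),0)$. Substituting $T_\alpha = T^{\rho_\alpha-1}$ into the multivariable expression from Proposition~\ref{prop.v.trivial} yields
\[
Z_\lambda(T,0) = \sum_{A\in\Clan(X,D)} P_A((T^{\rho_\alpha-1})) \prod_{\alpha\in A} \frac{1}{1-(\Lef T)^{\rho_\alpha-1}}.
\]

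Since $\rho_\alpha-1$ divides $a$ for every $\alpha$, each factor $1-(\Lef T)^{\rho_\alpha-1}$ divides $1-(\Lef T)^a$, with quotient the geometric sum $\sum_{j=0}^{a/(\rho_\alpha-1)-1}(\Lef T)^{j(\rho_\alpha-1)}$. Because $\Card(A) \leq d$ for every face, I can write
\[
(1-\Lef^a T^a)^d\, Z_\lambda(T,0) = \sum_{A} P_A((T^{\rho_\alpha-1}))\,(1-\Lef^a T^a)^{d-\Card(A)} \prod_{\alpha\in A} \frac{1-(\Lef T)^a}{1-(\Lef T)^{\rho_\alpha-1}},
\]
which is a Laurent polynomial in $T$ with coefficients in $\Mot_k$. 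This simultaneously establishes $Z_\lambda(T,0) \in \Mot_k\{T\}$ and $(1-\Lef^a T^a)^d Z_\lambda(T,0) \in \Mot_k\{T\}^\dagger$.

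To evaluate at $T=\Lef^{-1}$, I observe that $(1-\Lef^a T^a)$ vanishes there, so $(1-\Lef^a T^a)^{d-\Card(A)}$ kills every term with $\Card(A) < d$. Thus only the maximal faces $A$ survive (those with $\Card(A) = d = 1+\dim\Clan(X,D)$). For each such $A$ and each $\alpha\in A$, the geometric-sum identity gives
\[
\left.\frac{1-(\Lef T)^a}{1-(\Lef T)^{\rho_\alpha-1}}\right|_{T=\Lef^{-1}} = \sum_{j=0}^{a/(\rho_\alpha-1)-1} 1 = \frac{a}{\rho_\alpha-1}.
\]
Simultaneously, when $T_\alpha = \Lef^{-(\rho_\alpha-1)}$, each factor $1-\Lef^{\rho_\alpha-1}T_\alpha$ vanishes for $\alpha\in A$; hence the congruence from Proposition~\ref{prop.v.trivial} becomes a genuine equality, yielding
\[
P_A((\Lef^{-(\rho_\alpha-1)})) = (1-\Lef^{-1})^{d} \int_{\mathscr L(\mathscr D_A)} \Lef^{u_A(x)}\,\mathrm dx.
\]
Multiplying these contributions over $\alpha\in A$ and summing over maximal faces gives the claimed identity.

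The argument is essentially a routine specialization; the only technical subtlety is handling the indeterminate form in the ratio of polynomials at $T=\Lef^{-1}$, which is resolved cleanly by the geometric-series expansion above. No resolution of singularities or further geometric input beyond Proposition~\ref{prop.v.trivial} is needed.
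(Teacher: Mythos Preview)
Your proof is correct and follows essentially the same approach the paper implicitly uses: the corollary is stated without proof immediately after the computation establishing Proposition~\ref{prop.v.trivial}, and the very same specialization argument (substitute $T_\alpha=T^{\rho_\alpha-1}$, clear denominators with $(1-\Lef^aT^a)^d$, observe that only faces with $\Card(A)=d$ survive at $T=\Lef^{-1}$, then invoke the congruence) is carried out explicitly in Section~\ref{s.proof} when assembling the global zeta function. One small wording quibble: the surviving faces are those with $\Card(A)=d$, i.e., the top-dimensional faces, which need not coincide with \emph{all} maximal faces of the Clemens complex; but this does not affect the argument.
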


\Subsection{Non-trivial characters (places in $C \setminus C_0$):
Proof of Proposition~\ref{prop.v.nontrivial}}

Assume that $v$ belongs to~$C \setminus C_0$.
In this Section, we establish the behavior
of the Fourier transform $Z_v(\mathbf T,\mathbf a(\xi))$
for non-zero $\xi\in E$. 
Since the place~$v$ is fixed, the motivic Igusa integrals
$Z_v(\mathbf T,\cdot) $ and $Z_{\lambda,v}(T,\cdot)$ are denoted~$Z(\mathbf T,\cdot)$ and~$Z_\lambda(T,\cdot)$ respectively.

By Proposition~\ref{prop.mIie2}, we already know
that for each~$\xi$, $Z(\mathbf T,\mathbf (\xi))$ is rational,
with denominator given by 
products of polynomials of the form $1-\Lef^n \mathbf T^{\mathbf m}$
for some $n\in\N$ and $\mathbf m\in\N^{\mathscr A}$
which are described through some Clemens complex.
To prove Proposition~\ref{prop.v.nontrivial}, we have
to prove two more properties: first, that this rationality  property
holds uniformly on the strata of 
some \tcr{constructible} partition $(U_{i})$, and second, that
\tcr{for each~$i$, 
there exists an integer $e\in[0,d-1]$ and an integer $a\geq 1$
such that the restriction to~$U_{i}$
of~$(1-\Lef^a T^a)^e Z_\lambda(T,\cdot)=
(1-\Lef^aT^a)^e Z((T_\alpha^{\lambda_\alpha}),\cdot)$
belongs to $\ExpMot_{U_{i}}\{T\}^\dagger$.}

The following analysis thus refines the proof of Proposition~\ref{prop.mIie2}.
It is very close to the one of~\cite{chambert-loir-tschinkel:2012},
except for the replacement of $p$-adic oscillatory integrals
by the motivic integrals of Section~\ref{ss.mIie}.


Recall that any point $\xi\in G(\tcb{F_v})$ gives rise to a linear
form $f_\xi=\langle \xi,\cdot\rangle$ on~$G_{\tcb{F_v}}$, hence to
a rational function on~$X$, or even on~$\mathscr X$.
More generally, the morphism~$\mathbf a\colon E_{\tcb{F_v}}\to G$
induces a regular function~$f_E$ on $G\times_{\tcb{F_v}} E_{\tcb{F_v}}$, hence
a rational function on~$\mathscr X\times_k E$.
We view $f_E$ as a family
of regular functions on~$G$, resp. as a family
of rational functions on~$\mathscr X$, both indexed by~$E$
and study the variation, for $p\in E$,
of the divisors $\div(f_{\mathbf a(p)})$. 

\begin{lemm}
Let $P$ be a reduced $k$-scheme of finite type and let
$\mathbf a\colon P_{\tcb{F_v}} \ra G$  be an $\tcb{F_v}$-morphism.
Let $f_P$ be the associated rational function on $\mathscr X\times_k P$.
There exists a decomposition of~$P$
as a disjoint union of smooth locally closed subsets~$P_i$,
and for each~$i$, a map 
$\pi_i\colon\mathscr Y_i\ra \mathscr X\times_k P_i$ which is
a composition of blowings-up whose centers
are smooth over~$P_i$, with generic fiber invariant under the
action of~$G_{\tcb{F_v}}\times_k P_i$, and do not meet~$G_{\tcb{F_v}}\times_k P_i$
such that 
the rational function~$\pi_i^*f_P$ on~$\mathscr Y_i$
defines a regular morphism from~$\mathscr Y_i\times_k P_i$ to~\tcr{$\P^1_{R}$}
\tcr{and such that the horizontal part of $\pi_i^*(\mathscr D\times_k P_i)$
is a relative divisor with strict normal crossings}.
\end{lemm}
\begin{proof}
The arguments of Lemma~3.4.1 of~\cite{chambert-loir-tschinkel:2012}
furnish a partition~$(P_i)$ of~$P$ by smooth locally closed
subsets and morphisms $\pi_i\colon Y_{i,\tcb{F_v}}\to X\times_{\tcb{F_v}} P_{i,\tcb{F_v}}$ which are compositions of blowing-ups whose centers are smooth
over~$P_{i,\tcb{F_v}}$ and do not meet $G_{\tcb{F_v}}\times_{\tcb{F_v}} P_{i,\tcb{F_v}}$
such that the rational function~$\pi_i^*f_P$ defines
a regular morphism $Y_{i,\tcb{F_v}}\times_{\tcb{F_v}} P_{i,\tcb{F_v}}\to\P^1_{\tcb{F_v}}$.

Let us fix such a stratum~$P_i$ and call it~$P$; similarly,
we write $\pi$ for~$\pi_i$ and $Y$ for~$Y_{i}$.
Up to replacing~$P$ by a dense open subset~$P'$, and
applying embedded resolution of singularities to the Zariski
closures of the centers of the blowing-ups which define~$\pi$,
we obtain a morphism $\pi'\colon\mathscr Y\to\mathscr X\times_k P'$,
composition of blowing-ups with smooth centers over~$P'$,
such that $\pi'_{\tcb{F_v}}\colon\mathscr Y_{\tcb{F_v}}\to\mathscr X_{\tcb{F_v}}\times_{\tcb{F_v}} P'_{\tcb{F_v}}$
satisfies the preceding assumptions.
We then view the morphism $f_P$
as a rational map $\mathscr Y\times_k P'\dashrightarrow \mathbf P^1$.
Above a dense open subset~$P''$ of~$P'$
we resolve its indeterminacies  by a further composition of blowing-ups
whose centers  are smooth over~$P'$ and do not meet the generic fiber.
We can now repeat these arguments for~$P\setminus P''$, 
so that the lemma follows by noetherian induction.
\end{proof}

We apply the preceding lemma to $E\setminus\{0\}$
and fix such a stratum, which we call~$P$.
Set $Y=\mathscr Y_{\tcb{F_v}}$. For $p\in P$, let $\mathscr Y_p$ be the fiber
of~$\mathscr Y$ above~$p$ under the composition $\mathscr Y\to \mathscr X\times_k P\to P$; let $Y_p=(\mathscr Y_p)_{\tcb{F_v}}$.

For $p\in P$, by the change of variable formula 
(Theorem 13.2.2 in \cite{cluckers-loeser:2008}),
both $Z(\mathbf T,0)$
and $Z(\mathbf T,\tcb{\mathbf a}(p))$ can be computed as motivic integrals
on~$\mathscr L(\mathscr Y_p)$. Since the rational function~$f_{\tcb{\mathbf a}}$
extends to a regular morphism from~$\mathscr Y_{\tcb{F_v}}\times P_{\tcb{F_v}}$ to~$\P^1_{\tcb{F_v}}$,
Proposition~\ref{prop.mIie-poles} 
gives an explicit form for $Z(\mathbf T,\tcb{\mathbf a}(p))$ as
a rational function, whose denominator
is controlled in terms of the Jacobian divisor of~$Y/X$
and the sub-complex $\Clan(Y_p,Y_p\setminus G)_0$
of $\Clan(Y_p,Y_p\setminus G)$
corresponding to the irreducible components of~$Y\setminus G$
along which $f_{\tcb{\mathbf a}(p)}$ has no pole. 
Since the sub-complex $\Clan(Y_p,Y_p\setminus G)_0$ 
of $\Clan(Y_p,Y_p\setminus G)$
depends constructibly on~$p\in P$,
this implies \tcr{the} first part of Proposition~\ref{prop.v.nontrivial}.

\color{blue}
The final part of the proof follows by applying the geometric
arguments leading to the proof of 
Proposition~3.4.4 of~\cite{chambert-loir-tschinkel:2012}.
For every $p\in P$, 
the Clemens complex  of $(Y_p,Y_p\setminus G)$
has distinguished vertices, those coming from $X$ which
we denote by~$D'_\alpha$,
and exceptional ones,
corresponding to divisors contracted by~$\pi_p$,
which we denote by $E'_\beta$. 
Since $Y_p\to X$ is $G$-equivariant,
the rational function~$f_{\mathbf a(p)}$ has a divisor
$\div_X(f_{\mathbf a(p)})$ on~$X$ whose strict transform
dominates 
the divisor of~$f_{\mathbf a(p)}$ on~$Y_p$: their difference
is an effective linear combination of the divisors~$E'_\beta$
(Lemma~1.4 of~\cite{chambert-loir-t2002}).
Moreover, the relative Jacobian divisor of $Y_p/X_p$
is a linear combination of these divisors, with positive coefficients.

As in Section~3.4 of~\cite{chambert-loir-tschinkel:2012},
these geometric facts imply 
that only the distinguished vertices
of $\Clan(Y_p,Y_{p}\setminus G)$,
that is, those of $\Clan(X,X\setminus G)$, intervene.
Moreover,
letting $d_p=1+\dim(\Clan(X,X\setminus G)_0)$ 
and $d=1+\dim(\Clan(X,X\setminus G))$,
then one has $d_p<d$
(cf. Lemmas~3.4.5 and 3.5.4 of~\cite{chambert-loir-tschinkel:2012}).

Applying Proposition~\ref{prop.mIie-poles},
we conclude that there exists an integer~$a$ such that
$(1-\Lef^a T^a)^{d_p} Z_\lambda(T,\tcb{\mathbf a(p)})\in\GVarM_k\{T\}^\dagger$
for every $p\in P$.
\color{black}

This concludes the proof of Proposition~\ref{prop.v.nontrivial}.

\bibliographystyle{smfplain}
\bibliography{aclab,acl,mot}
%
%
%
%
%
%

%

\end{document}